\documentclass[11pt,reqno]{amsart}
\usepackage[dvipsnames,table]{xcolor}
\usepackage{enumitem}
\usepackage{manyfoot}
\usepackage{amsmath,amssymb,amsthm,mathtools}
\usepackage{calligra} 
\usepackage{siunitx}
\usepackage{graphicx}
\usepackage{caption}
\usepackage{subcaption}
\usepackage{wrapfig}
\usepackage{float}
\usepackage[percent]{overpic}
\usepackage{rotating}
\usepackage{multicol}
\usepackage{booktabs}
\usepackage[left=2cm, right=2cm, top=3cm, bottom=2cm]{geometry}

\renewcommand{\arraystretch}{1.25} 
\usepackage{tikz}
\usepackage{tikz-cd}
\usetikzlibrary{shapes, arrows, positioning, fit, backgrounds, matrix, arrows.meta, bending}
\usepackage{pgfplots}
\pgfplotsset{compat=1.18}
\usepackage{comment}
\usepackage{orcidlink}
\usepackage{cite}
\usepackage{hyperref} 
\newcommand{\R}{\mathbb{R}}

\newcommand{\Z}{\mathbb{Z}}
\newcommand{\N}{\mathbb{N}}
\newcommand{\es}{\mathbb{S}}
\DeclareMathOperator{\Ker}{Ker}
\DeclareMathOperator{\Ima}{Im}
\newcommand{\restr}[2]{{%
  \left.\kern-\nulldelimiterspace 
  #1 
  \vphantom{\big|} 
  \right|_{#2} 
}}
\theoremstyle{plain}
\newtheorem{theorem}{Theorem}[section]
\newtheorem{lemma}[theorem]{Lemma}
\newtheorem{proposition}[theorem]{Proposition}
\newtheorem{corollary}[theorem]{Corollary}

\theoremstyle{definition}
\newtheorem{definition}[theorem]{Definition}
\newtheorem{example}[theorem]{Example}

\theoremstyle{remark}
\newtheorem{remark}[theorem]{Remark}

\def\pe#1{{\bar #1}}

\def\0jp{\pe{0}}

\begin{document}

\title[Singular Morse-Smale Flows on Pseudomanifolds]{Singular Morse-Smale Flows on Pseudomanifolds with Spherical-Cone Singularities: Conley Theory and Intersection Homology}

\author{Jean-Paul Brasselet}
\address{The Institut de Mathématiques de Marseille, Aix-Marseille Université, Marseille, France}
\email{jean-paul.brasselet@univ-amu.fr}
\thanks{Jean-Paul Brasselet was supported by FAPESP under grant No. 2025/06178-3, CNRS (I2M, Marseille) and  Aix-Marseille University.}

\author{Dahisy Lima}
\address{Center for Mathematics, Computing and Cognition, Federal University of ABC, Santo André, São Paulo, Brazil}
\email{dahisy.lima@ufabc.edu.br}
\thanks{Dahisy  Lima was supported by the São Paulo Research Foundation (FAPESP) under grants 2025/12435-9, 2024/00923-6 and 2022/16455-6.}

\author{Denilson Tenório}
\address{Center for Mathematics, Computing and Cognition, Federal University of ABC, Santo André, São Paulo, Brazil}
\email{denilson.tenorio@ufabc.edu.br}
\thanks{Denilson Tenório was supported by the Coordenação de Aperfeiçoamento de Pessoal de Nível Superior - Brasil (CAPES) - Finance Code 001. The author acknowledges the support and hospitality of Aix-Marseille Universit\'e and the Institut de Math\'ematiques de Marseille (I2M), where part of this research was conducted.}

\subjclass[2020]{Primary 37D15, 37B30; Secondary 55N33, 57N80}
\keywords{Morse-Smale flows, Conley index, intersection homology, Euler-Poincaré characteristic, spherical-cone singularities, Lyapunov function.}

\date{June 24, 2026} 

\begin{abstract}
Classical Morse-Conley theory provides powerful tools for relating dynamical and topological invariants of smooth manifolds. In this paper, we extend this perspective to pseudomanifolds with spherical-cone singularities. By introducing and investigating singular Morse–Smale flows on pseudomanifolds with isolated singularities whose links are homeomorphic to finite disjoint unions of spheres. We establish formulas for the Conley indices of spherical-cone singularities in terms of their local dynamics,  prove the existence of global Lyapunov functions, and investigate the structure of the associated Lyapunov graphs. These results yield alternative formulas for the Euler-Poincar\'{e} characteristic expressed in terms of Conley-theoretic invariants. To relate the singular and smooth settings, we introduce a global morsification procedure that associates a smooth manifold $\widetilde{X}$ to a singular pseudomanifold $X$. This construction allows us to compare the topology of $X$ and $\widetilde{X}$ and, in particular, to derive formulas relating their Euler–Poincaré characteristics. Finally, we study the intersection homology of pseudomanifolds with spherical-cone singularities. We establish connections between intersection homology, singular homology, and the Morse homology of the morsification, thereby providing a dynamical approach to the computation of intersection homology.
\end{abstract}

\maketitle

\tableofcontents

\section{Introduction}
\label{section_introduction}

The study of Morse-Smale flows is one of the classical topics in dynamical systems, originating in the pioneering work of Smale~\cite{smale1960morse}. 
These flows are characterized by a particularly simple form of recurrent dynamics: their chain recurrent sets consist of finitely many hyperbolic singularities and periodic orbits, and the stable and unstable manifolds of these invariant sets intersect transversely. When the phase space has nonempty boundary, one also requires the flow to be transverse to the boundary.

A fundamental result due to Peixoto~\cite{PEIXOTO1962101} states that the class of  Morse-Smale flows on compact surfaces is an open and dense subset of the space of $C^1$-vector fields, thereby characterizing structurally stable dynamics in dimension two. Although this density property fails in higher dimensions~\cite{smale1966structurally}, Morse-Smale flows remain structurally stable in arbitrary dimension~\cite{palis1970structural}. Moreover, the existence of Lyapunov functions for such systems was established in~\cite{meyer1968energy}.

The subclass of Morse-Smale flows without periodic orbits is often referred to in the literature as a \textit{Morse flow} or a \textit{gradient-like flow}. A classical theorem of Smale~\cite{Smale1961} shows that every such flow arises as the gradient flow of a Morse function with respect to a suitable Riemannian metric. Further characterizations and results concerning gradient-like flows can be found in~\cite{bertolim2006isolating, BERTOLIM2006minimal, cruz1999, ketty1993lyapunov, fleitas1975classification}. Despite its classical origins, the theory of Morse-Smale flows remains an active area of research, with recent developments across different contexts such as ~\cite{bernstein2026existence, bertolim2024minimal, breen2021morse, carvalho2015hyperbolicity, dang2019spectral, lima2024cancellations,  pochinka2022nonsingular}.

Besides their importance in dynamical systems, Morse-Smale flows are deeply connected with topology through Morse theory and Conley index theory. One of the fundamental principles underlying these theories is that dynamical information can be used to recover topological invariants of the underlying space. Classical examples include Morse inequalities, Morse homology, the Poincaré-Hopf theorem, and Conley-theoretic formulas for the Euler-Poincaré characteristic. These results reveal a rich interplay between dynamics and topology in the smooth setting.

Much less is known when the phase space possesses singularities. Singular spaces arise naturally in geometry and topology, and several classical tools from smooth dynamics are no longer directly available. In particular, notions such as Morse index, stable and unstable manifolds, and gradient flows require suitable adaptations. While some progress has been achieved in dimension two \cite{lima2024cancellations, Lima2025, LimaTenorio_2, Montufar}, a Conley-theoretic framework for higher-dimensional singular spaces has remained largely unexplored.

The purpose of this paper is to develop such a framework for a class of pseudomanifolds with isolated spherical-cone singularities. More precisely, we consider $n$-pseudomanifolds $X$ with isolated singularities whose links are finite disjoint unions of $(n-1)$-dimensional spheres and introduce the notion of a \textit{singular Morse-Smale flow} on these spaces. This setting generalizes the singular surfaces studied in~\cite{Lima2021, Lima2025} and, to the best of our knowledge, provides the first extension of Conley theory to higher-dimensional singular spaces of this type.

Throughout this paper, cone singularities whose links are homeomorphic to finite disjoint unions of spheres will be called \textit{spherical-cone singularities}.

The local description of the flows around a spherical-cone singularity can be done in terms of the local sheets whose dynamics correspond to a nondegenerate critical point of Morse index $\lambda$. Using this description, we derive explicit formulas for the homotopy and numerical Conley indices of spherical-cone singularities. In particular, we show that the homotopy Conley index is always a wedge sum of spheres whose dimensions and multiplicities are determined by the local dynamics.

We then investigate global aspects of the dynamics. We prove the existence of Lyapunov functions for singular Morse-Smale flows and study the associated Lyapunov graphs. In particular, we are able to locally characterize the indegree and outdegree of a vertex  associated with an arbitrary spherical-cone singularity. 

A central tool introduced in this work, to study the dynamics globally, is a global morsification procedure of a pseudomanifold with spherical-cone singularities, generalizing the concept of morsification introduced in~\cite{Lima2021}. This construction associates to a pseudomanifold $X$ equipped with a singular Morse-Smale flow a smooth manifold $\widetilde X$ endowed with a Morse-Smale flow whose dynamics reflect those of the original system. 

The morsification allows us to compare the topology and dynamics of the singular and smooth settings and leads to several applications.

Among these applications, we obtain dynamical formulas for the Euler-Poincaré characteristic of a pseudomanifold expressed in terms of Conley-theoretic invariants and local data associated with the spherical-cone singularities.

These formulas may be viewed as singular counterparts of classical results relating topology and dynamics on smooth manifolds.
These formulas recover the classical theorem of Smale ~\cite{smale1960morse} in the smooth case.

We also establish explicit relationships between the Euler-Poincaré characteristics of a pseudomanifold and its morsification.

Finally, we investigate the relationships between intersection homology, singular homology, and Morse homology. We first obtain a topological description of the intersection homology groups of these spaces in terms of singular homology; this is purely topological and independent of the underlying dynamics. Moreover, we also highlight that the construction of the morsification yields a map $\mathfrak{p}\colon \widetilde{X}\to X$, which acts as a normalization for the pseudomanifold $X$. As a consequence, the dynamics of a singular Morse-Smale flow can be used to compute intersection homology, providing a new connection between Morse-Conley theory and singular topology. Recent developments linking intersection homology with dynamics are discussed in~\cite{LimaTenorio_2, Ludwig2017}. 

The paper is organized as follows. In Section~\ref{section_background}, we review the necessary background on Morse-Smale flows on smooth manifolds, basic notions of  Conley theory,  and intersection homology. Section~\ref{section_conley} develops the Conley-theoretic framework for singular Morse-Smale flows $\varphi$ on $n$-pseudomanifolds $X$ with spherical-cone singularities. In Subsection \ref{subsection_definitions}, we formally introduce these notions and establish formulas for the Conley index of these singularities. In  Subsection \ref{subsection_lyapunov_function}, we prove the existence of Lyapunov functions in this setting. In Subsection \ref{subsection_morsification}, we establish the existence of a global morsification $(\widetilde{X},\widetilde{\varphi})$  for each pair $(X,\varphi)$. In Subsection \ref{subsection_graph}, we compare the data associated with the Lyapunov functions   and Lyapunov graphs associated to  $X$ and $\widetilde{X}$.
Section \ref{section_characteristic}, we focus on the Euler-Poincaré characteristic of pseudomanifolds and compare the topology of $X$ and $\widetilde{X}$. In particular, we derive formulas relating their Euler–Poincaré characteristics. 
Finally, Section~\ref{section_IH} is devoted to intersection homology. We establish connections between intersection homology, singular homology, and the Morse homology of the morsification, thereby providing a dynamical approach to the computation of intersection homology.

\section{Background}
\label{section_background}

This paper lies at the intersection of two a priori unrelated theories: Conley theory and intersection homology. To make our results accessible to specialists in either area, we find it beneficial to review the fundamental framework of both theories.

\subsection{Morse-Smale Flows on Smooth Manifolds}
\label{subsection_MS_background}

Morse-Smale flows constitute a fundamental class of dynamical systems, serving as an ideal model to formulate hypotheses regarding continuous flows on pseudomanifolds with spherical-cone singularities see Subsection~\ref{subsection_definitions}, in particular Definition~\ref{definicao_fluxo}. For completeness, we briefly review the basic definitions of flows and Morse-Smale flows below. We refer the reader to~\cite{robinson1998dynamical, akin1993general, palis1982geometric} for general definitions and main results, and to~\cite{Banyaga2004} for foundational results in classical Morse theory.

Let $M$ be a compact smooth $n$-manifold, and let $\varphi$ be a \textit{continuous flow} on $M$, that is, a continuous map
\[
    \varphi\colon \mathbb{R}\times M \to M,
\]
satisfying
\begin{enumerate}[label=(\roman*), leftmargin=*, font=\itshape, align=left]
    \item $\varphi(0,x)=x$ for all $x\in M$;
    \item $\varphi(s+t, x)=\varphi(s, \varphi(t,x))$ for all $s, t \in \mathbb{R}$ and $x \in M$.
\end{enumerate}

Given $x \in M$, the \textit{$\omega$-limit set} of $x$ is defined by
\[
    \omega(x) \coloneqq \left\{ y \in M \;\middle|\; \exists (t_i)_{i\in\N} \subset \mathbb{R} \text{ such that } t_i \to +\infty \text{ and } \varphi(t_i, x) \to y \right\}.
\]
Similarly, the \textit{$\alpha$-limit set} of $x\in M$ is defined as
\[
    \alpha(x) \coloneqq  \left\{ y \in M \;\middle|\; \exists (t_i)_{i\in\N} \subset \mathbb{R} \text{ such that } t_i \to -\infty \text{ and } \varphi(t_i, x) \to y \right\}.
\]
The \textit{orbit} of a point $x \in M$ with respect to  $\varphi$ is  the set
\(
    \mathcal{O}(x) \coloneqq \{ \varphi(t,x) \mid t \in \mathbb{R} \}.
\)
Orbits of a continuous flow can be classified into three types:
\begin{enumerate}[label=(\arabic*), leftmargin=*, align=left]
    \item \textit{Singular orbit}: an orbit consisting of a single point, that is,  $\varphi(t,x)=x$ for all $t\in \mathbb{R}$. In this case, $\mathcal{O}(x)=\{x\}$, and $x$ is called a \textit{singularity} of $\varphi$. The set of all singularities of $\varphi$ is denoted by $\operatorname{Sing}(\varphi)$.
    
    \item \textit{Periodic  (or closed) orbit}: a nonsingular orbit for which there exists $T > 0$ such that
    \( 
        \varphi(t+T,x) = \varphi(t,x) \) for all  \(t \in \mathbb{R}.
    \)
    The smallest such $T>0$ is called the \textit{period} of the orbit. The set of all periodic orbits is denoted by $\operatorname{Per}(\varphi)$.

    \item \textit{Regular orbit}: an orbit that is neither a singular orbit nor a periodic orbit.
\end{enumerate}

A point $x\in M$ is called a \textit{wandering point} of $\varphi$ if there exist a neighborhood $U$ of $x$ and $t_0>0$ such that
\[
    \varphi(t,U)\cap U = \varnothing , \qquad \text{for all} \ |t|>t_0.
\]
Otherwise,  $x$ is called a \textit{nonwandering point}. Denote by $\Omega(\varphi)$ the set of all nonwandering points of $\varphi$, called the \textit{nonwandering set}. Clearly
\[
    \operatorname{Sing}(\varphi)\cup \operatorname{Per}(\varphi)\subset \Omega(\varphi).
\]

Let $d$ be a metric on $M$. Given $x, y\in M$ and $\epsilon>0$, an \textit{$\epsilon$-chain} from $x$ to $y$ is a finite sequence of points $x_1=x, x_2, \ldots, x_{m_0}=y$ in $M$ together with times $t_1, t_2, \ldots, t_{m_0-1} \ge 1$ such that
\[
    d( \varphi(t_i,x_{i}), x_{i+1} )<\epsilon, \quad \text{for all}\  1\leq i < m_0.
\]
 A point $x\in M$ is called \textit{chain recurrent} if for every $\epsilon > 0$ there exists an $\epsilon$-chain from $x$ to itself. The set of all chain recurrent points, denoted by $\mathcal{R}(\varphi)$, is called the \textit{chain recurrent set}. It follows immediately from the definitions that
\[
    \operatorname{Sing}(\varphi)\cup \operatorname{Per}(\varphi)\subset \Omega(\varphi) \subset \mathcal{R}(\varphi).
\]

The elements of $\operatorname{Sing}(\varphi) \cup \operatorname{Per}(\varphi)$ are called \textit{critical elements}. Given a  critical element $\sigma$,  the \textit{stable manifold} of $\sigma$ is defined as
\[
    W^s(\sigma) \coloneqq \{ x \in M \mid \omega(x) = \sigma \},
\]
and the \textit{unstable manifold} of $\sigma$  is defined as
\[
    W^u(\sigma) \coloneqq \{ x \in M \mid \alpha(x) = \sigma \}.
\]

We are ready to introduce the notion of a Morse–Smale flow.

\begin{definition}
\label{def_Morse_Smale_suave}
A continuous flow $\varphi$ on $M$ is said to be a \textit{Morse-Smale flow} if it satisfies the following conditions:
\begin{enumerate}[label=(\roman*), leftmargin=*, font=\itshape, align=left]
    \item  $\Omega(\varphi)=\operatorname{Sing}(\varphi)\cup \operatorname{Per}(\varphi)$ and it consists of finitely many hyperbolic\footnote{See~\cite{palis1982geometric} for the definition of hyperbolicity.} critical elements;
    \item   $W^s(\sigma_1)$ and $W^u(\sigma_2)$ intersect transversely, for any two distinct critical elements $\sigma_1$ and $\sigma_2$;
    \item $\varphi$ is transverse to the boundary $\partial M$.
\end{enumerate}    
\end{definition}

By condition (i) above and the results of~\cite{Smale1961}, hyperbolic singularities are locally conjugate to nondegenerate critical points of a smooth function.  Also, if $p$ is a singularity of Morse index $\lambda$, then its stable and unstable manifolds are smooth submanifolds of $M$ satisfying
\[
    \dim(W^u(p)) = \lambda \quad \text{and} \quad \dim(W^s(p)) = n - \lambda, 
\] where $n$ is the dimension of $M$.
Moreover, if $\gamma$ is a periodic orbit of Morse index $\lambda$, its stable and unstable manifolds are also submanifolds of $M$, with dimensions given by
\[
    \dim(W^u(\gamma)) = \lambda + 1 \quad \text{and} \quad \dim(W^s(\gamma)) = n - \lambda. 
\]

A fundamental consequence for Morse–Smale flow is that $\mathcal{R}(\varphi) = \Omega(\varphi) =\operatorname{Sing}(\varphi)\cup \operatorname{Per}(\varphi)$. The first equality  follows from the transversality condition in Definition~\ref{def_Morse_Smale_suave}, which implies that Morse-Smale flows admit no homoclinic cycles\footnote{Given $p\in \operatorname{Sing}(\varphi)$ and $x\in M $  with $x \neq p$, the orbit $\mathcal{O}(x)$ is called \textit{homoclinic} to $p$ if $\varphi (t,x) \to  p$ as  $t\to \pm\infty$. In this case, the set $ \mathcal{O}(x) \cup \{p\}$ is called a \textit{homoclinic cycle}.}  nor heteroclinic cycles\footnote{A \textit{heteroclinic cycle} consists of a finite sequence of singularities $p_1, p_2, \ldots, p_k \in \operatorname{Sing}(\varphi)$ with $k \geq 2$, where $p_1, \ldots, p_{k-1}$ are distinct and $p_k = p_1$, together with points $x_i\in X$ such that $\alpha(x_i) = p_i$ and $\omega(x_i) = p_{i+1}$ for all $1 \leq i \leq k-1$. See \cite{palis1982geometric} for more details.}. 

We now introduce a natural relation on the set of critical elements which is in fact a partial order on the set $\mathcal{R}(\varphi)$. 

\begin{definition}
\label{definition_ordem_singularidades}
For critical elements $\sigma_1, \sigma_2\in \mathcal{R}(\varphi)$, define $\sigma_1\leq \sigma_2$ if $W^u(\sigma_1 )\cap W^s(\sigma_2 ) \neq \varnothing$.
\end{definition}

Morse-Smale flows are structurally stable. Classical results of Palis~\cite{palis1970structural} show that any sufficiently small $C^1$-perturbation of the vector field generating $\varphi$ yields a new flow that is topologically equivalent to $\varphi$; that is, there exists a homeomorphism mapping the orbits of $\varphi$ onto the orbits of the perturbed flow, preserving their orientation. 

Moreover, Morse-Smale flows exhibit gradient-like behavior outside their recurrent set. Results of Smale~\cite{smale1960morse} and Meyer~\cite{meyer1968energy} guarantee the existence of a global strict Lyapunov (or energy) function, which is smooth, constant on each critical element, and strictly decreasing along regular orbits.

Finally, the dynamical  structure associated with a Morse-Smale flow imposes strong constraints on the topology of the underlying manifold $M$. Let $c_k$ denote the number of critical elements of Morse index $k$, and let $\beta_k$ denote the $k$-th Betti number of $M$. As shown in~\cite{smale1960morse}, the strong Morse inequalities hold
\[
    \sum_{i=0}^{k} (-1)^{k-i} c_i \geq \sum_{i=0}^{k} (-1)^{k-i} \beta_i  \quad \text{for all } 0 \leq k \leq n = \dim(M),
\]
with equality for $k=n$, yielding the Euler–Poincaré formula.

\subsection{Conley Index}
\label{subsection_conley_theory_background}

Originally introduced as a generalization of the Morse index, the Conley index has the notable advantage of being well-defined for any isolated invariant set associated to a continuous dynamical system. This generality makes it particularly well suited for the study of dynamical behaviors in topological spaces that may exhibit singularities or lack smooth structure.

In what follows, we introduce only the notation and definitions needed for our purposes; for a comprehensive treatment, we refer the reader to~\cite{Conley} and~\cite{livro_coloquio}.

Let $X$ be a Hausdorff topological space, and let $\varphi$ be a continuous flow on $X$. A compact subset $N\subset X$ is an \textit{isolating neighborhood} if
\[
    \operatorname{Inv}(N) \coloneqq \{x \in N \mid \varphi(t,x) \in N \text{ for all } t \in \mathbb{R}\} \subset \operatorname{int}(N),
\]
where $\operatorname{int}(N)$ denotes the interior of $N$. A subset $S \subset X$ is called an \textit{isolated invariant set} with respect to $\varphi$ if $S=\operatorname{Inv}(N)$ for some isolating neighborhood $N$.

Let $S\subset X$ be an isolated invariant set. A pair of compact sets $(L_1,L_2)$ is an \textit{index pair} of $S$ if:
\begin{enumerate}[label=(\roman*), leftmargin=*, font=\itshape, align=left]
    \item $\overline{L_1 \setminus L_2}$ is an isolating neighborhood for $S$ in $X$ and $S\subset \operatorname{int} (L_1\setminus L_2)$; 
    \item $L_2$ is \textit{positively invariant} in $L_1$, that is, if $x\in L_2$ and $\varphi([0,T],x)\subset L_1$ then $\varphi([0,T],x)\subset L_2$; 
    \item $L_2$ is the \textit{exit set} of the flow in $L_1$, that is, if $x\in L_1$ and $\varphi([0,\infty], x)\not\subset L_1$ then there exists $T> 0$ such that $\varphi([0,T],x)\subset L_1$ and $\varphi(T,x)\in L_2$. 
\end{enumerate}

Figure~\ref{figura_conley_index_pair} provides a schematic illustration of the notion of index pairs. In all three examples, the compact subsets $L_2 \subset L_1$ and the invariant set $S$ are fixed, while the underlying dynamics vary. From left to right: in the first, $L_2$ is not positively invariant; in the second, $L_2$ fails to be an exit set; and in the third, the pair $(L_1, L_2)$ constitutes a valid index pair for $S$.

\begin{figure}[!ht]
\centering
\begin{overpic}[width=9cm]{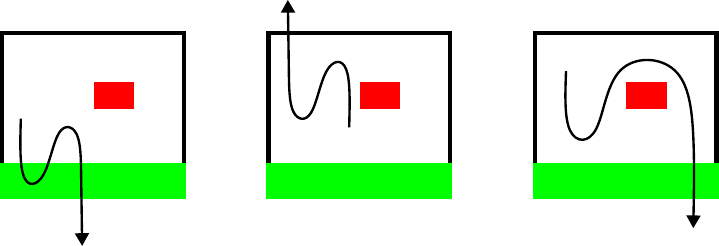}
\put(0,32){$L_1$}
\put(34,32){$L_1$}
\put(74, 32){$L_1$}
\put(0,2){$L_2$}
\put(35, 2){$L_2$}
\put(74, 2){$L_2$}
\put(15,15){$S$}
\put(52, 15){$S$}
\put(89, 15){$S$}
\end{overpic}
\caption{Examples of configurations allowed and not allowed for index pairs.}
\label{figura_conley_index_pair}
\end{figure}

Figure~\ref{figura_plano_index_pair} depicts index pairs $(L_1, L_2)$ 
for nondegenerate critical points in the plane. In each case, $L_1$ is a 
compact neighborhood of the respective critical point. For the critical 
point $p_1$, we have $L_2 = \varnothing$; for $p_2$, the set $L_2$ consists of two 
connected components (shown in green); and  for $p_3$, $L_2$ is  the boundary of $L_1$.

\begin{figure}[!ht]
\centering
\vspace{0.5cm}
\begin{overpic}[width=10cm]{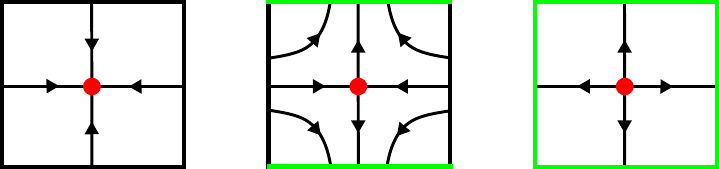}
\put(7, 25){$L_2=\varnothing$}
\put(48, 25){$L_2$}
\put(80, 25){$L_2 = \partial L_1$}
\put(8.5,8.5){$p_1$}
\put(45.5, 8.5){$p_2$}
\put(82.5, 8.5){$p_3$}
\end{overpic}
\caption{Index pairs for nondegenerate critical points in the plane: $p_1$ is an attracting, $p_2$ a saddle, and $p_3$ a repelling singularities.}
\label{figura_plano_index_pair}
\end{figure}

Before introducing the concept of Conley index, we establish some necessary terminology. A \textit{pointed space} $(X, x)$ consists of a topological space $X$ equipped with a distinguished basepoint $x \in X$. For a given pointed space $(X, x)$, we denote by $[X, x]$ its \textit{homotopy type}; that is, the equivalence class of all pointed spaces that are homotopy equivalent\footnote{Homotopy equivalence is taken in the sense of basepoint-preserving maps.} to $(X, x)$.

In this setting, the \textit{homotopy Conley index} of an isolated invariant set $S\subset X$ is defined as
\[
    h(S) \coloneqq [L_1 / L_2, [L_2]],
\]
which is the homotopy type of the pointed space $(L_1 / L_2, [L_2])$, where $(L_1, L_2)$ is an index pair for $S$, and $[L_2]$ is the equivalence class of $L_2$ in the quotient space $L_1 / L_2$. If $L_2=\varnothing$, we adopt the convention of replacing the pair $(L_1,\varnothing)$ by the pointed pair $(L_1\sqcup{\ast},{\ast})$, where $\ast$ denotes an arbitrary point.

The \textit{homology Conley index} of $S\subset X$ is defined as
\[
    CH_{\ast}(S) \coloneqq \widetilde{H}_{\ast}(h(S)),
\]
where $\widetilde{H}_{\ast}$ denotes the reduced singular homology over $\mathbb{Z}$. Finally, the \textit{numerical Conley index} is given by
\[
    h_{\ast}(S) \coloneqq \operatorname{rank}(CH_{\ast}(S)).
\]

As established in~\cite{Conley}, every isolated invariant set admits an index pair. Moreover, the homotopy type of the pointed space $(L_1/L_2, [L_2])$ is independent of the choice of index pair $(L_1, L_2)$ for $S$. Consequently, the Conley index is well-defined.

The next example shows the relation between Conley index and Morse index when both are defined. 

\begin{example}
\label{example_conley_smooth}
Let $\varphi$ be a Morse-Smale flow on a smooth manifold $M$. The singularities and periodic orbits comprising the chain recurrent set $\mathcal{R}(\varphi)$ are isolated invariant sets with respect to $\varphi$, and their homotopy Conley indices are computed as follows:
\begin{enumerate}[label=(\alph*), font=\itshape, leftmargin=*, align=left]
    \item If $x$ is a nondegenerate critical point of Morse index $\lambda$, its homotopy Conley index is $h(x) = \es^{\lambda}$. See \cite[Example 1.22]{livro_coloquio}.
    \item If $\gamma$ is an orientable hyperbolic periodic orbit of Morse index $\lambda$, its homotopy Conley index is $h(\gamma)=\es^{\lambda}\vee \es^{\lambda+1}$. See \cite[Proposition 1.12]{livro_coloquio}.
\end{enumerate}    
\end{example}

\subsection{Intersection Homology}
\label{subsection_IH_background}

In this subsection, we recall the definition of intersection homology, a theory originally introduced by Goresky and MacPherson in their seminal work~\cite{goresky1980}. The primary motivation for this theory was to restore duality for singular varieties, a setting where classical results such as Poincaré and Poincaré-Lefschetz duality fail. For comprehensive introductions to the subject, we refer the reader to~\cite{kirwan2006, maxim2019, Brasselet2021}, alongside the foundational paper~\cite{goresky1980}. We begin by reviewing the necessary preliminary concepts.

A topological  space $X$ is an \textit{$n$-pseudomanifold} if is nonempty, paracompact, Hausdorff topological space and there exists a closed subspace $\Sigma \subset X$ satisfying the following conditions:
\begin{enumerate}[label=(\roman*), font=\itshape, leftmargin=*, align=left]
    \item $X\setminus \Sigma$ is an $n$-dimensional manifold that is dense in $X$;
    \item  $\dim(\Sigma)\leq n - 2$.
\end{enumerate}
The subspace $\Sigma$ associated to a  pseudomanifold $X$ contains the set of possible \textit{singularities}; that is, points that do not admit an open neighborhood in $X$ homeomorphic to an open subset of a Euclidean space (of any finite dimension), or whose boundary is homeomorphic to a sphere.

A topological space $X$ is said to be endowed with a \textit{piecewise linear structure}, or simply a \textit{PL structure}, if there exists a class of locally finite simplicial triangulations of $X$, called \textit{admissible triangulations}, satisfying the following conditions:
\begin{enumerate}[label=(\roman*), font=\itshape, leftmargin=*, align=left]
    \item Every subdivision of an admissible triangulation is admissible.
    \item Any two admissible triangulations admit a common subdivision.
\end{enumerate}
A space equipped with such a structure is called a \textit{PL space}.

A triangulation of a PL space $X$ is a triangulation belonging to its corresponding admissible class; that is, a simplicial complex $K$ whose geometric realization, denoted by $\lvert K \rvert$, is homeomorphic to $X$. In this case, the space $X$ is said to be \textit{triangulated}, and we write $X = \lvert K \rvert$.

A \textit{PL pseudomanifold} of dimension $n$ is a PL space $X$ containing a closed PL subspace $\Sigma$ of codimension at least $2$ such that $X \setminus \Sigma$ is an $n$-dimensional PL manifold that is dense in $X$. Equivalently, given a triangulation $X = \lvert K \rvert$, the space $X$ is the union of $n$-simplices, and every $(n-1)$-simplex is a face of exactly two $n$-simplices.

A PL pseudomanifold $X$ is \textit{orientable} if it admits a compatible orientation for all its $n$-simplices. Once such an orientation is chosen, $X$ is said to be \textit{oriented}.

\begin{example}
The pinched torus and the suspension of the torus are examples of orientable PL-pseudomanifolds. 
\end{example}

A \textit{topological stratification} $\mathcal{S}$ of an $n$-pseudomanifold $X$ is a filtration
\[
    X=X_n\supset X_{n-1}=X_{n-2}\supset \cdots \supset X_1\supset X_0 \supset X_{-1}=\varnothing ,
\]
by closed subspaces satisfying the following conditions:
\begin{enumerate}[label=(\roman*), font=\itshape, leftmargin=*, align=left]
    \item The \textit{stratum} $S_i \coloneqq X_i\setminus X_{i-1}$ is either empty or a finite union of $i$-dimensional smooth submanifolds of $X$.
    
    \item Each point $x\in S_i$ admits a distinguished neighborhood $U_x\subset X$ together with a homeomorphism
    \[
        \phi_x\colon U_x \to \mathbb{B}^i\times c(L),
    \]
    satisfying the \textit{local triviality property}. Here, the components are defined as follows:
    \begin{itemize}
        \item $\mathbb{B}^i$ is an open ball in $\mathbb{R}^i$.
        \item The \textit{link} $L$ of the stratum $S_i$ is a compact $(n-i-1)$-dimensional pseudomanifold, independent (up to homeomorphism) of the choice of $x\in S_i$, and filtered by
        \[
            L=L_{n-i-1}\supset L_{n-i-3} \supset \cdots \supset L_0 \supset L_{-1}=\varnothing.
        \]
        \item The space $c(L)$ is the \textit{open cone} over $L$, defined as
        \[
            c(L)\coloneqq \left( L \times [0,1) \right) / {\sim},
        \]
        where $(x,0) \sim (x',0)$ for all $x, x' \in L$. It is filtered by setting $(c(L))_0=\{ \text{vertex} \}$ and $(c(L))_k=c(L_{k-1})$ for $k> 0$. By convention, $c(\varnothing)$ is a single point.
    \end{itemize}
    Furthermore, the homeomorphism $\phi_x$ preserves the stratifications; that is, it induces restriction homeomorphisms:
    \[
        \restr{\phi_x}{U_x \cap X_j} \colon U_x \cap X_j \to \mathbb{B}^i\times c(L_{j-i-1}), \quad \text{for all } j\geq i.
    \]
\end{enumerate}

Stratifications satisfying the Whitney conditions are known to admit local topological triviality, see~\cite{borel1984intersection}.

A \textit{PL stratification} $\mathcal{S}$ of an $n$-dimensional PL pseudomanifold $X$ is a stratification in which all involved subspaces are PL subspaces, and the local triviality property holds in the PL category.

Given a triangulation $X = \lvert K \rvert$ of an oriented and compact PL space $X$, the chain complex of simplicial chains of $K$ with coefficients in $\mathbb{Z}$ is denoted by $C_{\ast}(K; \mathbb{Z})$. A chain $\xi \in C_i(K; \mathbb{Z})$ is written as $\xi = \sum \xi_\sigma \sigma$, where the sum runs over all the oriented $i$-simplices $\sigma$ of $K$ and $\xi_\sigma \in \mathbb{Z}$.

Every chain $\xi \in C_i(K; \mathbb{Z})$ has a canonical image in $C_i(K'; \mathbb{Z})$ for any subdivision $K'$ of $K$. Two chains in $C_i(K_1; \mathbb{Z})$ and $C_i(K_2; \mathbb{Z})$ are identified if their images in a common subdivision coincide. The group $C_i(X; \mathbb{Z})$ of PL geometric chains with closed supports on $X$ is the direct limit, under refinement, of the groups $C_i(K; \mathbb{Z})$ over all triangulations of $X$.

The support of $\xi \in C_i(K; \mathbb{Z})$, denoted by $|\xi|$, is the union of the closed simplices for which $\xi_\sigma \neq 0$. This support is independent of the chosen subdivision; thus, the support of a chain $\xi \in C_i(X; \mathbb{Z})$ is well-defined.

Using the standard boundary map, the chain complex $C_{\ast}(X; \mathbb{Z})$ is well-defined, and its homology, denoted by $H_{\ast}(X; \mathbb{Z})$, is called the homology with closed supports, or the Borel-Moore homology of $X$. It is a well-known result that, for compact spaces, this homology is isomorphic to singular homology. Throughout this paper, we denote the singular homology of $X$ simply by $H_{\ast}(X; \mathbb{Z})$. 

Let $X$ be a PL stratified pseudomanifold. If an $i$-chain $\xi$ is transverse to the stratum $X_{n-\alpha}$ of a PL filtration, then transversality implies the equality
\[
    \dim( |\xi| \cap X_{n-\alpha} ) = i - \alpha. 
\]

The allowable chains and cycles are those that intersect each stratum $X_{n-\alpha}$ with a controlled transversality defect, which is bounded by a perversity function.

A \textit{perversity}, also referred to as a \textit{GM perversity} in honor of Goresky and MacPherson, is an integer-valued function
\[
    \overline{p}\colon  [0, \dim (X)]\cap \mathbb{Z} \to \mathbb{N}
\]
satisfying the following conditions:
\begin{enumerate}[label=(\roman*), font=\itshape, leftmargin=*, align=left]
    \item $\overline{p}(0)=\overline{p}(1)=\overline{p}(2)=0$. 
    \item $\overline{p}(\alpha)\leq \overline{p}(\alpha + 1)\leq \overline{p}(\alpha)+1$ for all $\alpha \geq 2$.
\end{enumerate}    

We sometimes denote $\overline{p}(\alpha)$ by $p_{\alpha}$. These conditions correspond to the original definition given by Goresky and MacPherson to ensure the fundamental properties of the theory. Over the years, however, more general perversities have been investigated by various authors, leading to new results and further developments in the theory. In this paper, we restrict our attention to GM perversities.

Standard examples of GM perversities include the following:
\begin{enumerate}[label=(\arabic*), leftmargin=*, align=left]
    \item The zero perversity $\overline{0}=(0, \ldots, 0)$.
    \item The top (or maximum) perversity $\overline{t}=(0,0,0,1,2,\ldots, n-2)$; that is, $t_{\alpha}=\alpha-2$ for all $\alpha \geq 2$.
    \item The upper middle perversity $\overline{n} (\alpha) = \left\lceil \frac{\alpha -2}{2} \right\rceil$ for all $\alpha \geq 2$.
    \item The lower middle perversity $\overline{m} (\alpha) = \left\lfloor \frac{\alpha -2}{2} \right\rfloor$ for all $\alpha \geq 2$.
\end{enumerate}

An $i$-chain $\xi\in C_i(X; \mathbb{Z})$ is said to be \textit{$\overline{p}$-allowed} if
\[
    \dim \bigl( |\xi| \cap X_{n-\alpha} \bigr) \leq i - \alpha + p_{\alpha}, \quad \text{for all } \alpha\geq 2.
\]

We denote by $IC^{\overline{p}}_i(X; \mathbb{Z})$ the subgroup of $C_i(X; \mathbb{Z})$ consisting of chains $\xi$ such that both $\xi$ and $\partial \xi$ are $\overline{p}$-allowed; that is, $\xi\in IC^{\overline{p}}_i(X; \mathbb{Z})$ if
\begin{align*}
    \dim \bigl( |\xi| \cap X_{n-\alpha} \bigr) &\leq i - \alpha + p_{\alpha}, \\
    \dim \bigl( |\partial\xi| \cap X_{n-\alpha} \bigr) &\leq (i-1) - \alpha + p_{\alpha},
\end{align*}
for all $\alpha \geq 2$. Using the standard boundary map of chains $\partial_{\ast}$, we obtain the chain complex denoted by $(IC_{\ast}^{\overline{p}}(X; \mathbb{Z}), \partial_{\ast})$.

The \textit{intersection homology groups with perversity $\overline{p}$}, or simply the \textit{intersection homology groups}, denoted by $IH^{\overline{p}}_{\ast}(X; \mathbb{Z})$, are the homology groups of the complex $(IC_{\ast}^{\overline{p}}(X; \mathbb{Z}), \partial_{\ast})$.

Although we have introduced intersection homology with coefficients in $\mathbb{Z}$, this construction naturally extends to any ring $R$. 

Finally, we discuss the notions of normal pseudomanifolds and normalization.
An oriented $n$-dimensional pseudomanifold $X$ is said to be \textit{normal} if:
\[
    H_n(X, X \setminus \{x\}; \mathbb{Z}) = \mathbb{Z},
\]
for all $x \in X$. A \textit{normalization} of an $n$-dimensional pseudomanifold $X$ is a normal pseudomanifold $\widetilde{X}$ together with a finite-to-one projection $\pi\colon \widetilde{X} \to X$ such that, for any $p \in X$, the induced map:
\[
    \pi_{\ast}\colon \bigoplus_{q \in \pi^{-1}(p)} H_n(\widetilde{X}, \widetilde{X}\setminus \{q\}; \mathbb{Z}) \to H_n(X, X\setminus \{p\}; \mathbb{Z})
\]
is an isomorphism.

A classical result established in~\cite{goresky1980} states that intersection homology is invariant under normalization. Specifically, if $X$ is a pseudomanifold with normalization $\pi\colon \widetilde{X} \to X$, then the induced map $\pi_{\ast}$ yields isomorphisms $IH^{\overline{p}}_{\ast}(\widetilde{X}; \mathbb{Z}) \cong IH^{\overline{p}}_{\ast}(X; \mathbb{Z})$ for any perversity $\overline{p}$.

\section{Conley Theory for Singular Morse-Smale Flows on Pseudomanifolds with Spherical-Cone Singularities}
\label{section_conley}

In this section, we formally introduce both the class of pseudomanifolds and the class of continuous flows that constitute the central objects of investigation in this work. We then develop the Conley-theoretic framework adapted to this singular setting and present our first main results. Specifically, we establish a formula for computing the Conley index of spherical-cone singularities, construct a global Lyapunov function compatible with the singular flow, and introduce a suitable morsification procedure that can be used to describe the associated Lyapunov graphs and their relation with the underlying dynamics.

\subsection{Flows on Pseudomanifolds with Spherical-Cone Singularities and their Conley Indices}
\label{subsection_definitions}

We begin this subsection by introducing the singular space (see Definition \ref{definicao_pseudovariedade}) and the continuous flow (see Definition~\ref{definicao_fluxo}) that constitute the framework of our study.

\begin{definition}
\label{definicao_regioes_cone}
Let $k \in \mathbb{N}$ with $k \geq 2$. A \textit{$k$-sheet cone} of dimension $n$, denoted by $\mathcal{C}^n_k$, is defined as the wedge sum of $k$ copies of the $n$-dimensional open unit ball $\mathbb{B}^n$, identified at their centers. More precisely, we define 
\[
    \mathcal{C}^n_k \coloneqq \bigvee_{i=1}^{k} \mathbb{B}_i^n,
\]
where each $\mathbb{B}_i^n$ is a copy of $\mathbb{B}^n = \{x \in \mathbb{R}^n \mid \|x\| < 1\}$, and all basepoints $0 \in \mathbb{B}^n_i$  are identified in the wedge construction.
\end{definition}

Throughout this paper, we fix the dimension $n \in \mathbb{N}$ with $n \geq 2$. When no ambiguity arises, we simply write $\mathcal{C}_k$ instead of $\mathcal{C}^n_k$.

In this work, we adopt the notion of $C^r$ maps as in~\cite{milnor1965topology}: that is, a map $f\colon K\to \mathbb{R}^n$, where $K \subset \mathbb{R}^m$, is of \textit{class $C^r$} for $1\leq r\leq \infty$ if $f$ admits an extension $\hat{f}$ of class $C^r$ to an open neighborhood of $K$ in $\mathbb{R}^m$. In this context, a map $f\colon K_1\to K_2$ between subsets $K_1\subset \mathbb{R}^m$ and $K_2\subset \mathbb{R}^n$ is called a \textit{$C^r$-diffeomorphism} if it is a bijection and both $f$ and $f^{-1}$ are of class $C^r$.

\begin{definition}
\label{definicao_pseudovariedade}
A subset $X \subset \mathbb{R}^\ell$  is said to be  an \textit{$n$-pseudomanifold with spherical-cone singularities} if, for every $p \in X$, there exists an open neighborhood $U_p$ of $p$ in $X$ and a diffeomorphism 
\[
    \psi \colon U_p \to \mathcal{P}
\]
such that $\mathcal{P}$ is either an open subset of $\mathbb{R}^n$ or the space $\mathcal{C}^n_k$ for some $k \geq 2$. In this case, $\psi$ is called a \textit{local chart} around $p$.
\end{definition}

\begin{remark}
It is straightforward to check that any set $X \subset \R^\ell$ satisfying Definition~\ref{definicao_pseudovariedade} is in fact an $n$-pseudomanifold as defined in Subsection~\ref{subsection_IH_background}.
\end{remark}

Let $X$ be a pseudomanifold with spherical-cone singularities. For each $k\geq 2$, let $\operatorname{M}(\mathcal{C}_k)$  be the set of all points $p\in X$  admitting a local chart of the form $\psi\colon U_p\to \mathcal{C}_k$, where $\mathcal{C}_k=\bigvee_{i=1}^{k} \mathbb{B}_i$.
Moreover, for each  $p \in \operatorname{M}(\mathcal{C}_k)$,  we define the \textit{sheets} of $p$ as the subsets $\psi^{-1}(\mathbb{B}_i)\subset X$ for each $i=1,\ldots, k$.

The \textit{singular set} of $X$  is defined as
\[
    \operatorname{Sing}(X) \coloneqq \bigcup_{k\geq 2} \operatorname{M}(\mathcal{C}_k).
\]
The points in $\operatorname{Sing}(X)$ are called \textit{spherical-cone singularities}, or simply \textit{cone singularities}. In this paper, we consider pseudomanifolds whose spherical-cone singularities have finite multiplicity, meaning that there exists some $k_0\in \mathbb{N}$ with $k_0\geq 2$ such that $\operatorname{M}(\mathcal{C}_{k})=\varnothing$ for all $k\geq k_0$.

\begin{remark}
\label{remark_link}
Let $p\in \operatorname{M}(\mathcal{C}_k)$ and consider a local chart $\psi\colon U_p \to \mathcal{C}_k$, where $U_p$ is an open neighborhood of $p$. Observe that the \textit{link} of $p$, denoted by $L_p$, can be regarded as the boundary of $\overline{U}_p$ and is homeomorphic to the disjoint union of $k$ copies of the $(n-1)$-dimensional sphere. We will use this fact extensively throughout the paper. This fact also motivates the nomenclature ``spherical-cone singularities''.
\end{remark}

Given a pseudomanifold with spherical-cone singularities $X$, we say that $X$ is \textit{closed} if it is a compact topological space without boundary, and it is \textit{orientable} if its \textit{regular part} $X \setminus \operatorname{Sing}(X)$ is an orientable smooth $n$-manifold.

\begin{example}
The pinched torus and, more generally, any topological space obtained by a finite sequence of wedge sums of $n$-dimensional manifolds (not necessarily at the same basepoint) are examples of $n$-pseudomanifolds with spherical-cone singularities. On the other hand, the suspension of the torus is not an example, since the links of its singularities are homeomorphic to a torus.
\end{example}

The primary goal of this paper is to recover topological properties of an $n$-dimensional pseudomanifold $X$ with spherical-cone singularities through the study of dynamical systems on $X$. To this end, we consider a continuous flow $\varphi$ on $X$. However, to obtain concrete results, we impose further hypotheses on $\varphi$, which are detailed below.

\begin{definition}
\label{definicao_fluxo}
Let $X$ be a compact $n$-pseudomanifold with spherical-cone singularities. A continuous flow $\varphi$ on $X$ is called a \textit{singular Morse-Smale flow} if it satisfies the following conditions:
\begin{enumerate}[label=(\roman*), font=\itshape, leftmargin=*, align=left]
    \item The chain recurrent set satisfies $\mathcal{R}(\varphi) = \operatorname{Sing}(\varphi)\cup \operatorname{Per}(\varphi)$.
    \item The restriction of $\varphi$ to $X \setminus \left( \bigcup_{p\in \operatorname{Sing}(X)} U_p \right)$ is a Morse-Smale flow, where $U_p$ is an open neighborhood of $p\in \operatorname{Sing}(X)$ as given in Definition~\ref{definicao_pseudovariedade}.
    \item For every $p \in \operatorname{Sing}(X)$ with local chart $\psi\colon U_p\to \bigvee_{i=1}^k \mathbb{B}_i$, the restriction of $\varphi$ to the closure of each sheet $\overline{\psi^{-1}(\mathbb{B}_i)}$ is a Morse-Smale flow for which $p$  is the unique critical element; moreover, $p$ is a hyperbolic singularity of the restricted flow.
    \item The flow $\varphi$ is transverse to the boundary $\partial X$.
\end{enumerate}
\end{definition}

Figure~\ref{figura_exemplo_01} illustrates some examples of closed $2$-pseudomanifolds with spherical-cone singularities endowed with singular Morse-Smale flows.

\begin{figure}[!ht]
\centering
\begin{overpic}[width=\textwidth]{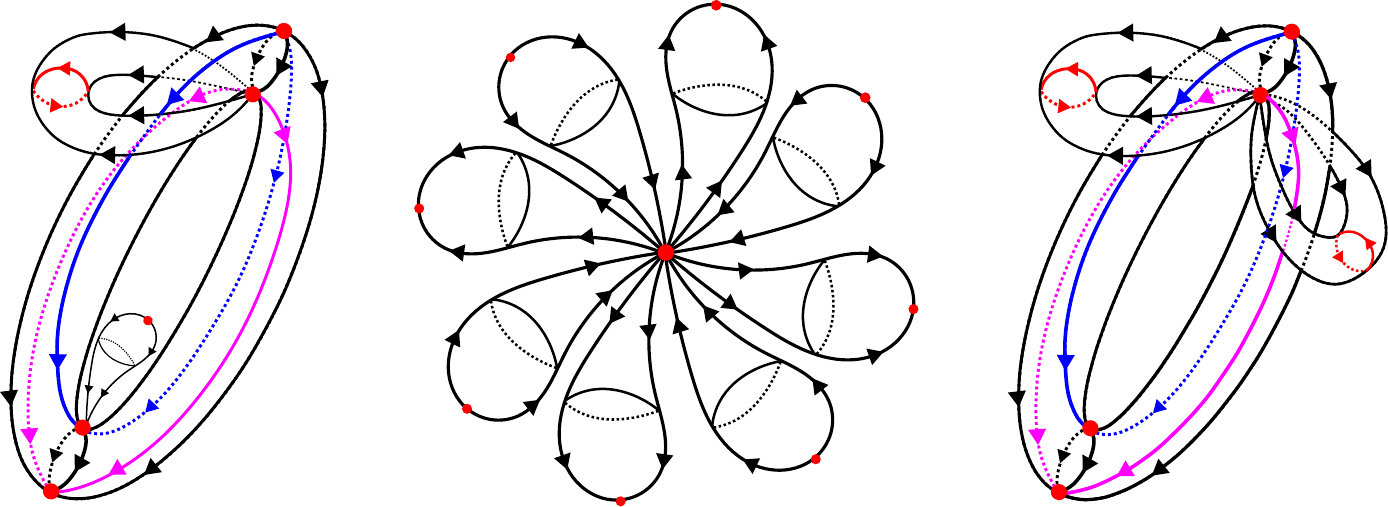}
\put(15.8, 26){$p_1$}
\put(6.7, 3.8){$p_2$}
\put(44,24){$p_3$}
\put(87.5,25){$p_4$}
\put(25,30){$X_1$}
\put(64,30){$X_2$}
\put(97,30){$X_3$}
\end{overpic}
\caption{Examples of $2$-pseudomanifolds endowed with  singular Morse-Smale flows.}
\label{figura_exemplo_01}
\end{figure}

By conditions $(i), (ii)$ of Definition~\ref{definicao_fluxo}, and since  $X \setminus \left( \bigcup_{p\in \operatorname{Sing}(X)} U_p \right)$ is a compact $n$-dimensional manifold, the set of singularities of Morse index $i$, denoted by $\operatorname{Crit}_i(\varphi)$, is finite.\footnote{By hypothesis, the singularities of $\varphi$ on $X \setminus \left( \bigcup_{p\in \operatorname{Sing}(X)} U_p \right)$ are hyperbolic, hence  they are locally given by nondegenerate critical points of a Morse function, this justifies the terminology.} By condition $(iii)$, all topological cone singularities of $X$ are also dynamical singularities.  Thus, the set of all singularities of the flow $\varphi$ is given by
\[
    \operatorname{Sing}(\varphi) = \operatorname{Sing}(X)\cup \left( \bigcup_{i=0}^n \operatorname{Crit}_i(\varphi) \right).
\]
For notational convenience, we introduce the following definition.

\begin{definition}
\label{definicao_natureza}
Let $X$ be an $n$-pseudomanifold with spherical-cone singularities, and let $\varphi$ be a singular Morse-Smale flow on $X$. Given $p \in \operatorname{M}(\mathcal{C}_k)$, the \textit{nature} of $p$ is defined by the word
\[
    a^{\eta_0(p)} s_1^{\eta_1(p)} s_2^{\eta_2(p)}\cdots s_{n-1}^{\eta_{n-1}(p)} r^{\eta_n(p)},
\]
where, for  each $\lambda \in \{0, \ldots, n\}$, the integer $\eta_{\lambda}(p)$  counts the number of sheets of  $p$  on which the restriction of $\varphi$ has a hyperbolic singularity of Morse index $\lambda$. The number $\eta_i(p)$ is called the \textit{$i$-th nature number} of $p$. For a singularity $x$ of $\varphi$ in the regular part of $X$ with Morse index $\lambda$, the \textit{$i$-th nature number} of $x$ is given by $\eta_i(x)=0$ for $i\neq \lambda$ and $\eta_i(x)=1$ for $i=\lambda$.
\end{definition}

When the dependence on $p$ is clear from the context, we omit it and simply write $a^{\eta_0} s_1^{\eta_1} \cdots s_{n-1}^{\eta_{n-1}} r^{\eta_n}$. Note that if $p\in \operatorname{M}(\mathcal{C}_k)$ has nature $a^{\eta_0} s_1^{\eta_1} \cdots s_{n-1}^{\eta_{n-1}} r^{\eta_n}$, then 
\[
    \sum_{i=0}^n \eta_i = k \quad \text{and} \quad 0 \leq \eta_i \leq k, \quad \text{for all } i \in \{0, \ldots, n\}.
\]

For example, in Figure~\ref{figura_exemplo_01}, the spherical-cone singularity $p_1\in \operatorname{M}(\mathcal{C}_3)$ has nature $s_1^1r^2=s_1r^2$, and $p_2\in \operatorname{M}(\mathcal{C}_2)$ has nature $as_1$. Also in Figure~\ref{figura_exemplo_01}, we have $\operatorname{Sing}(X_2)=\{ p_3\}$, where $p_3\in \operatorname{M}(\mathcal{C}_8)$ has nature $a^4r^4$. Finally, $p_4\in \operatorname{M}(\mathcal{C}_5)$ has nature $s_1r^4$.

Furthermore, we point out that if $X$ is an $n$-pseudomanifold with spherical-cone singularities endowed with a singular Morse-Smale flow, the standard definitions of $\alpha$- and $\omega$-limit sets, as well as stable and unstable manifolds, adapt naturally to this setting. Although the stable and unstable manifolds are no longer smooth submanifolds globally, they still exhibit a well-behaved structure, as described in the following remark.

\begin{remark}
\label{remark_stable_unstable_cone}
Let $p \in \operatorname{M}(\mathcal{C}_k)$ be a singularity of nature $a^{\eta_0}s_1^{\eta_1}\ldots s_{n-1}^{\eta_{n-1}}r^{\eta_n}$. Then, $W^u(p)\setminus \{p\}$ is the disjoint union of $k-\eta_0$ submanifolds, with exactly $\eta_{\lambda}$ submanifolds of dimension $\lambda$ for $\lambda=1, 2, \ldots, n$. Similarly, $W^s(p)\setminus \{p\}$ is the disjoint union of $k-\eta_n$ submanifolds, with exactly $\eta_{\lambda}$ submanifolds of dimension $n-\lambda$ for $\lambda=0, 1, \ldots, n-1$. 
\end{remark}

We now present our first result. Specifically, we determine the homotopy Conley index for each spherical-cone singularity of a singular Morse-Smale flow. As a consequence, we obtain a formula for the numerical Conley index.  Since the Conley index is well know for singularities and periodic orbits of Morse-Smale flows in the smooth case (see Example~\ref{example_conley_smooth}), the following results focus exclusively on the cone singularities $p\in \operatorname{Sing}(X)$.

\begin{proposition}
\label{proposition_indice}
Let $X$ be an $n$-pseudomanifold with spherical-cone singularities endowed with  a singular Morse-Smale flow  $\varphi$. Let $p \in \operatorname{M}(\mathcal{C}_k)$, the homotopy Conley index of $p$ is given as follows:
\begin{enumerate}[label=(\alph*), font=\itshape, leftmargin=*, align=left]
    \item If $p$ has nature $a^k$, then $h(p) = \es^0$.
    \item If $p$ has nature $a^{\eta_0} s_1^{\eta_1} s_2^{\eta_2}\ldots s_{n-1}^{\eta_{n-1}} r^{\eta_n}$ with $\eta_0 < k$, then
    \[
        h(p) = \left( \bigvee_{i=1}^{\eta_1 + (k-\eta_0-1)} \es^1 \right) \vee \left( \bigvee_{i=1}^{\eta_2} \es^2 \right) \vee \cdots \vee \left( \bigvee_{i=1}^{\eta_{n-1}} \es^{n-1} \right) \vee \left( \bigvee_{i=1}^{\eta_n} \es^n \right).
    \]
\end{enumerate}
\end{proposition}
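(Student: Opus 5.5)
The plan is to build an index pair for $p$ sheet by sheet and glue. Fix a local chart $\psi\colon U_p\to\mathcal{C}_k=\bigvee_{i=1}^k\mathbb{B}_i$. By Definition~\ref{definicao_fluxo}(iii), on the closure of each sheet $\overline{\psi^{-1}(\mathbb{B}_i)}$ the flow is Morse--Smale with $p$ as its only critical element, and $p$ is hyperbolic of some index $\lambda_i$. Smale's local conjugacy then gives, inside each sheet, a standard index pair $(N_i,E_i)$ for $p$. Here $N_i$ is a compact neighbourhood of $p$ in the sheet homeomorphic to $\mathbb{D}^{\lambda_i}\times\mathbb{D}^{n-\lambda_i}$, and $E_i\cong\partial\mathbb{D}^{\lambda_i}\times\mathbb{D}^{n-\lambda_i}$ is its exit set. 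After shrinking, I may assume each $N_i$ is contained in a small closed ball around $p$ in the sheet. Then $N_i/E_i\simeq\es^{\lambda_i}$, and $E_i\simeq\es^{\lambda_i-1}$, which is empty for $\lambda_i=0$ and is two points for $\lambda_i=1$.

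Next I set $L_1=\bigcup_i N_i$ and $L_2=\bigcup_i E_i$. I must check that this is an index pair for $\{p\}$ in $X$. The sheets meet only at $p$, and $p\notin E_i$, so any orbit segment other than $p$ stays inside one sheet. Consequently positive invariance and the exit-set property hold because they hold sheetwise. For isolation, $\operatorname{Inv}(L_1)=\bigcup_i\operatorname{Inv}(N_i)=\{p\}$, and $p$ lies in the interior of $L_1$ in $X$ because $L_1$ contains a neighbourhood of $p$ in every sheet. I expect this verification to be the main point needing care. Specifically, I have to confirm that no orbit can pass through $p$ from one sheet into another; this holds because $p$ is fixed.

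The remaining step is to compute $L_1/L_2$. If all sheets are attracting (nature $a^k$), then $L_2=\varnothing$ and $L_1$ is a wedge of $k$ contractible disks, hence contractible. So $h(p)=[L_1\sqcup\ast,\ast]=\es^0$, which gives (a).

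Otherwise $L_2\neq\varnothing$, and $L_1/L_2$ is the space obtained from the wedge $\bigvee_i N_i$ by collapsing the disjoint union of the $E_i$ to one point. Let $m=k-\eta_0$ be the number of sheets with $\lambda_i\ge1$. I would argue via a homotopy-equivalent model.
\begin{itemize}
\item Each attracting sheet has $E_i=\varnothing$ and contracts (deformation retracts onto $p$), so it can be dropped.
\item For each non-attracting sheet, $N_i/E_i\simeq\es^{\lambda_i}$.
\end{itemize}
The difficulty is that two basepoints get identified. In each non-attracting $N_i$, the point $p$ and the collapsed exit set are distinct points of $N_i/E_i$. Gluing the $m$ spaces $N_i/E_i$ at both points, via $p$ and via $[L_2]$, therefore yields $\bigvee_i\es^{\lambda_i}$ with one extra pair of identifications for each sheet beyond the first. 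Each such identification of two points in a connected space adds a wedge summand $\es^1$. More precisely, identifying two points $a,b$ of a path-connected CW complex $Y$ gives $Y\vee\es^1$ up to homotopy; equivalently, take $Y\cup[0,1]$ with the endpoints attached to $a,b$ and contract the arc. This step contributes $m-1=k-\eta_0-1$ extra circles. Collecting the $\eta_\lambda$ spheres of dimension $\lambda$ gives exactly the wedge in (b), including the $\eta_1+(k-\eta_0-1)$ copies of $\es^1$. Throughout, the homotopy equivalences should be basepoint-preserving at $[L_2]$; this is fine since all spaces are CW and the basepoint is a $0$-cell.
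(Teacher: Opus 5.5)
Your proposal is correct and follows essentially the same route as the paper. The index pair is the wedge of sheetwise neighbourhoods together with the disjoint union of the sheetwise exit sets; collapsing each sheet's exit set gives $\bigvee \es^{\lambda_i}$ based at $p$, and identifying the $k-\eta_0$ collapsed points adds $k-\eta_0-1$ circles. Your version is more careful than the paper's — box-shaped sheetwise pairs $\mathbb{D}^{\lambda_i}\times\mathbb{D}^{n-\lambda_i}$, an explicit check of the index-pair axioms using that orbits cannot pass through the fixed point $p$, and a justification of the two-point-identification step — but the argument is the same.
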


\begin{proof}
    Given $p\in \operatorname{M}(\mathcal{C}_k)$, consider an isolating neighborhood $N$ for $\{p\}$ which, up to homeomorphism, is given by
    \[
    N = \bigvee_{i=1}^{k} \overline{\mathbb{B}}_i,
    \]
    where $\overline{\mathbb{B}}_i$ denotes a copy of the closed unit ball $\overline{\mathbb{B}}=\{x \in \mathbb{R}^n \mid \|x\| \leq 1\}$. The compact set $N$ can be constructed using condition $(iii)$ of Definition~\ref{definicao_fluxo}, together with the transversality of the flow to the boundary. 

    If $p$ is an attracting singularity, the exit set of the flow on $N$ is empty. Since $N$ is contractible, the result in item $(a)$ follows.
    
    Now, suppose that $p \in \operatorname{M}(\mathcal{C}_k)$ has nature $a^{\eta_0} s_1^{\eta_1} s_2^{\eta_2} \ldots s_{n-1}^{\eta_{n-1}} r^{\eta_n}$ with $\eta_0 < k$. Observe that the restriction of $\varphi$ to each $\overline{\mathbb{B}}_i$ is a Morse-Smale flow without periodic orbits. Let $L_i$ denote the exit set on the boundary of each $\overline{\mathbb{B}}_i$, and let $L \subset N$ be the disjoint union of these exit sets $L_i$. Since exactly $\eta_0$ sheets are attracting, exactly $k-\eta_0$ of these exit sets are nonempty.
    Thus, $(N,L)$ is an index pair for $p$, since each pair $(\overline{\mathbb{B}}_i, L_i)$ is an index pair for the singularity restricted to the sheet $\overline{\mathbb{B}}_i$.

    We now determine the homotopy type of the pointed space $(N/L, [L] )$. To this end, we analyze the structure of the quotient space $N/L$.

    First, by collapsing each of the $k-\eta_0$ nonempty exit sets $L_i$ to a distinct point, we obtain the following homotopy type: 
    \begin{equation}
    \label{eq_prop_conley}
        \left( \bigvee_{i=1}^{\eta_1} \es^1 \right) \vee \left( \bigvee_{i=1}^{\eta_2} \es^2 \right) \vee \cdots \vee \left( \bigvee_{i=1}^{\eta_{n-1}} \es^{n-1} \right) \vee \left( \bigvee_{i=1}^{\eta_n} \es^n \right),
    \end{equation}
    where $p$ is the basepoint of the wedge sum. Each sphere arises from the fact that the restriction of $\varphi$ to each $\overline{\mathbb{B}}_i$ admits a unique nondegenerate critical point of Morse index $\lambda$; consequently, its Conley index is precisely $\es^{\lambda}$ (see Example~\ref{example_conley_smooth}).

    After this first step, the subset $L$ is transformed into a set of $k-\eta_0$ distinct points, located on the spheres within the wedge sum described in~\eqref{eq_prop_conley}. Identifying these $k-\eta_0$ points to a single point yields the quotient space $N/L$. Up to homotopy, this identification generates an additional $(k-\eta_0)-1$ spheres of dimension $1$ in the wedge sum. Therefore, the homotopy type of the pointed space is given by 
    \[
        h(p) = \left( \bigvee_{i=1}^{\eta_1+(k-\eta_0 -1)} \es^1 \right) \vee \left( \bigvee_{i=1}^{\eta_2} \es^2 \right) \vee \cdots \vee \left( \bigvee_{i=1}^{\eta_{n-1}} \es^{n-1} \right) \vee \left( \bigvee_{i=1}^{\eta_n} \es^n \right).
    \]
\end{proof}

\begin{corollary}
\label{corollary_indice}
Let $X$ be an $n$-pseudomanifold with spherical-cone singularities endowed with  a singular Morse-Smale flow $\varphi$. For any $p \in \operatorname{M}(\mathcal{C}_k)$, the numerical Conley index of $p$ is given as follows.
\begin{enumerate}[label=(\alph*), font=\itshape, leftmargin=*, align=left]
    \item If $p$ has nature $a^k$, then $h_{\ast}(p) = (1, 0, \ldots, 0)$. 
    \item If $p$ has nature $a^{\eta_0} s_1^{\eta_1} s_2^{\eta_2}\ldots s_{n-1}^{\eta_{n-1}} r^{\eta_n}$ with $\eta_0 < k$, then
    \[
        h_{\ast}(p) = \left(0, \eta_1 + (k - \eta_0 - 1), \eta_2, \ldots, \eta_{n-1}, \eta_n\right).
    \]
\end{enumerate}  
\end{corollary}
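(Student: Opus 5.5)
The corollary follows from Proposition~\ref{proposition_indice} by taking reduced homology. The numerical Conley index is $h_\ast(p)=\operatorname{rank} \widetilde H_\ast(h(p))$, and for a finite wedge of spheres reduced homology is additive. We use the standard fact that for a wedge of well-pointed spaces (CW complexes with basepoint a $0$-cell), $\widetilde H_j(A\vee B)\cong \widetilde H_j(A)\oplus \widetilde H_j(B)$. This follows from Mayer--Vietoris, or from the long exact sequence of the good pair $(A\sqcup B, \{a,b\})$. Combined with $\widetilde H_j(\es^m)\cong\mathbb{Z}$ when $j=m$ and $0$ otherwise, it determines every rank.

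For item $(a)$ we use the convention in the definition of $h$. When $L_2=\varnothing$, the index is the pointed space $(N\sqcup\{\ast\},\ast)$. Since $N$ is contractible, this is homotopy equivalent to $\es^0$ with basepoint one of its two points. Then $\widetilde H_0(\es^0)\cong\mathbb{Z}$ and all higher groups vanish, which gives $h_\ast(p)=(1,0,\ldots,0)$.

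For item $(b)$, Proposition~\ref{proposition_indice}$(b)$ expresses $h(p)$ as a wedge of spheres of positive dimension. It has exactly $\eta_1+(k-\eta_0-1)$ copies of $\es^1$ and $\eta_\lambda$ copies of $\es^\lambda$ for $2\le\lambda\le n$, and no $\es^0$ summand. The wedge is path-connected, so $\widetilde H_0=0$. By additivity, $\widetilde H_\lambda(h(p))$ is free of rank equal to the number of $\lambda$-spheres. Reading off these ranks gives $(0,\eta_1+k-\eta_0-1,\eta_2,\ldots,\eta_n)$. The case of an empty wedge, which would require all $\eta_\lambda=0$ for $\lambda\ge1$ and $k-\eta_0=1$, is impossible because $\eta_0<k$ and $\sum\eta_i=k$ force some $\eta_\lambda\ge1$ with $\lambda\ge1$. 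So that case need not be treated separately; in any event it would give the zero vector, consistent with the formula.

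There is no real obstacle. The only point needing care is that wedge additivity requires the basepoints to be nondegenerate, and here they are vertices of CW structures on the spheres. The rest is bookkeeping from Proposition~\ref{proposition_indice}.
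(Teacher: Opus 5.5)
Your proposal is correct and follows essentially the same route as the paper, which also obtains the corollary by reading off the ranks of the reduced homology of the wedge of spheres given by Proposition~\ref{proposition_indice}. Your additional care over the $L_2=\varnothing$ convention, the well-pointedness needed for wedge additivity, and the nonemptiness of the wedge when $\eta_0<k$ only makes explicit what the paper's one-line proof leaves implicit.
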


\begin{proof}
These equalities follow directly from Proposition~\ref{proposition_indice}, the definition of the numerical Conley index (see Subsection~\ref{subsection_conley_theory_background}), and the reduced homology of spheres.
\end{proof}

In Examples~\ref{example_spheres} and~\ref{example_torus}, we apply Corollary~\ref{corollary_indice} to compute the numerical Conley index for specific spherical-cone singularities.  

\begin{example}
\label{example_spheres}
Consider the disjoint union of $k$ $n$-dimensional spheres $\mathbb{S}^n_i$, each equipped with a North-South flow $\varphi_i\colon \mathbb{R}\times \mathbb{S}^n_i\to \mathbb{S}^n_i$, which is a Morse-Smale flow; that is, each sphere admits exactly two nondegenerate critical points: a repeller (the North Pole) and an attractor (the South Pole).

Let $X = \bigvee_{i=1}^k \mathbb{S}^n_i$, where the base point $p$ of the wedge sum is formed by identifying $0 < \eta_0 < k$ South Poles and $0 < \eta_n < k$ North Poles, such that $\eta_0+\eta_n=k$. Thus, $X$ is a closed $n$-pseudomanifold with a spherical-cone singularity, where $p \in \operatorname{M}(\mathcal{C}_k)$.

Now consider the singular Morse-Smale flow $\varphi\colon \mathbb{R}\times X\to X$ on $X$  defined by 
\[
\varphi(t,x)\coloneqq
\begin{cases}
    p, & \textit{if }x=p,\\
    \varphi_i(t,x), & \textit{if }x\neq p,
\end{cases}
\]
where $i$, in the previously equation, is the unique index $i$ such that $x\in \mathbb{S}^n_i\setminus\{ p \}$. By construction, the singular point $p\in \operatorname{M}(\mathcal{C}_{\eta_0+\eta_n})$ has nature $a^{\eta_0}r^{\eta_n}$. By Corollary~\ref{corollary_indice}, the numerical Conley index of $p$ is
\[
    h_{\ast}(p) = (0, \eta_n -1, 0, \dots, 0 , \eta_n ).
\]

A particular case of this example is given by the $2$-pseudomanifold $X_2$ in Figure~\ref{figura_exemplo_01}, where $\operatorname{Sing}(X_2)=\{p_3\}$ and $p_3\in \operatorname{M}(\mathcal{C}_8)$ has nature $a^4r^4$, yielding $h_{\ast}=(0,3,4)$. 
\hfill $\diamond$
\end{example}

\begin{example}
\label{example_torus}
For this example, let $n \in \mathbb{N}$ be an even number with $n\geq 2$ and consider the $n$-dimensional torus $T^n = \mathbb{S}^1 \times \cdots \times \mathbb{S}^1$. Points of $\mathbb{T}^n$ may be represented by standard angular coordinates $(\theta_1, \dots, \theta_n)$ in fundamental domain 
\[
    \left\{ (\theta_1, \dots, \theta_n)\in \mathbb{R}^n \mid \theta_i \in [0,2\pi ) \quad \text{for all} \quad i =1,\dots, n \right\}.
\]
Define the smooth function $f \colon T^n \to \mathbb{R}$  by 
\[
    f(\theta_1, \dots , \theta_n   )\coloneqq \sum_{j=1}^n \cos(\theta_j).
\]
Endow $T^n$ with the standard flat metric induced  by the metric on $\mathbb{R}^n$, the gradient of $f$ is given by
\[
    \nabla f (\theta_1, \dots, \theta_n) = \left( -\sin(\theta_1) , \dots , -\sin(\theta_n) \right).
\]
Thus, $(\theta_1, \dots, \theta_n)$ is a critical point if and only if $\theta_j \in \{0, \pi\}$, for all $j=1, \dots, n$. Hence, there are $2^n$ critical points. Moreover, the Hessian matrix of $f$ in these coordinates is 
\[
H_f(\theta_1, \dots , \theta_n)=
\begin{bmatrix}
 -\cos(\theta_1) & 0 & \dots & 0 \\
 0 & -\cos(\theta_2) & \dots & 0 \\
 \vdots & \vdots & \ddots & \vdots \\
 0 & 0 & \dots & -\cos(\theta_n)
\end{bmatrix}.
\]
Therefore, $f$ is a Morse function. Observe that, for each $j=0, 1, \dots, n$, we have exactly:
\[
c_j= \binom{n}{j} = \frac{n!}{j!(n-j)!}, 
\]
critical points of Morse index $j$. After a sufficiently small perturbation, the vector field $-\nabla f$ induces a Morse-Smale flow $\varphi$ with one repelling singularity and one attracting singularity, and for each $\lambda=1, \dots, n-1$, we have exactly $c_\lambda$ saddles of index $\lambda$.

Now, consider $T^n$ endowed with this flow $\varphi$. Let $M_1$ be the disjoint union of $m_1$ copies of the $n$-dimensional sphere, each equipped with a flow consisting of two isolated repelling singularities and an attracting closed orbit at the equator. This construction relies fundamentally on the hypothesis that $n$ is even, since the $(n-1)$-dimensional equatorial sphere admits a closed orbit only when $n-1$ is odd. Let $M_2$ be the disjoint union of $m_2$ copies of the $n$-dimensional sphere, each exhibiting North-South dynamics. Finally,  define the singular space
\[
 X \coloneqq \left( T^n\sqcup M_1 \sqcup M_2\right) / \sim ,
\]
where $\sim$ is the equivalence relation obtained by collapsing the $2m_1$ repellers of $M_1$ into a saddle of index $\lambda$ on $T^n$, resulting in a point $p_1 \in \operatorname{M}(\mathcal{C}_{2m_1+1})$, and collapsing the $m_2$ attractors of $M_2$ into a distinct saddle of index $\lambda$ on $T^n$, resulting in a point $p_2 \in \operatorname{M}(\mathcal{C}_{m_2+1})$. This is possible because, for every even $n$ and $1 \leq \lambda \leq n-1$, we have $c_{\lambda}\geq 2$.  On this $n$-pseudomanifold with spherical-cone singularities, we can define a singular Morse-Smale flow through a construction analogous to that of Example~\ref{example_spheres}. 

In this configuration, the singular point $p_1\in \operatorname{M}(\mathcal{C}_{2m_1+1})$ has nature $s_{\lambda}r^{2m_1}$ and $p_2\in \operatorname{M}(\mathcal{C}_{m_2+1})$ has nature $a^{m_2}s_{\lambda}$. By Corollary~\ref{corollary_indice}, we obtain
\[
h_{\ast}(p_1)= 
\begin{cases}
    \left( 0, 2m_1+1, 0, \dots , 0, 2m_1  \right), &\quad \text{if } \lambda=1;\\
    \left( 0, 2m_1, 0, \dots , 0, 1, 0, \dots , 0, 2m_1 \right), &\quad \text{if } \lambda \neq 1;
\end{cases}
\]
and
\begin{align*}
h_{\ast}(p_2)&= 
\begin{cases}
    \left( 0, 1 +((m_2+1)-\eta_0-1), 0, \dots , 0  \right), &\quad \text{if } \lambda=1;\\
    \left( 0, (m_2 +1 )-\eta_0 -1, 0, \dots , 0, 1, 0, \dots , 0\right), &\quad \text{if } \lambda \neq 1.
\end{cases}
\\
&= 
\begin{cases}
    \left( 0, 1, 0, \dots , 0  \right), &\quad \text{if } \lambda=1;\\
    \left( 0, 0, \dots , 0, 1, 0, \dots , 0 \right), &\quad \text{if } \lambda \neq 1.
\end{cases}
\end{align*}

Two particular cases of this example are illustrated in Figure~\ref{figura_exemplo_01} by the pseudomanifolds $X_1$ and $X_3$. Using the previous formulas, the numerical Conley indices of the singularities in $X_1$ and $X_3$ are given by:
\[
    h_{\ast}(p_1) = (0,3,2),\quad  h_{\ast}(p_2) = (0,1,0), \quad \text{and} \quad h_{\ast}(p_4) = (0,5,4).
\]\hfill $\diamond$
\end{example}

\subsection{Lyapunov Function for Singular Morse-Smale Flows}
\label{subsection_lyapunov_function}

In this subsection, we establish the local and global existence of Lyapunov functions for singular Morse-Smale flows on $n$-pseudomanifolds with spherical-cone singularities. The use of these functions has proven to be effective for obtaining results related to the Conley index, as investigated in~\cite{cruz1999, ketty1993lyapunov, Lima2025, Montufar}. We begin by formalizing the definition of a Lyapunov function in this setting.

\begin{definition}
\label{definition_lyapunov_function}
Let $X$ be an $n$-pseudomanifold with spherical-cone singularities endowed with a singular Morse-Smale flow $\varphi$. A continuous function $f\colon X \to \mathbb{R}$ is called a \textit{Lyapunov function} for $\varphi$ if the following conditions hold:
    \begin{enumerate}[label=(\roman*), leftmargin=*, font=\itshape, align=left]
        \item $\restr{f}{ X \setminus \operatorname{Sing}(X) }$ is a smooth function;  
        \item $\frac{d}{dt} \restr{f}{X \setminus \operatorname{Sing}(X)} (\varphi(t,x)) < 0$ for all $x \notin \mathcal{R}(\varphi)$, where $\mathcal{R}(\varphi)$ denotes the chain recurrent set of $\varphi$;
        \item $f$ is constant on each connected component of $\mathcal{R}(\varphi)$ and takes distinct values on distinct components.
    \end{enumerate}
\end{definition}

The values of $f$ on the critical elements $\mathcal{R}(\varphi)$ are called \textit{critical values}; all other values in the image of $f$ are called \textit{regular values}. Combining results from~\cite{Conley} and~\cite{wilson1969smoothing}, it follows that any smooth flow on a compact smooth manifold admits a Lyapunov function.

In the context of pseudomanifolds with spherical-cone singularities, the first step in constructing a global Lyapunov function is to establish the existence of such functions locally, specifically within the isolating neighborhood of each singularity. The following result guarantees the local existence.

\begin{lemma}
\label{lemma_lyapunov_local}
Let $X$ be an $n$-pseudomanifold with spherical-cone singularities endowed with a singular Morse-Smale flow $\varphi$. For each $p \in \operatorname{Sing}(X)$, there exist a neighborhood $N$ of $p$ in $X$ and a Lyapunov function $f\colon N \to \mathbb{R}$ for $\varphi$. 
\end{lemma}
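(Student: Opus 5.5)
The plan is to build $f$ sheet by sheet and glue the pieces at $p$. First I will fix the isolating neighborhood used in the proof of Proposition~\ref{proposition_indice}, namely $N=\bigvee_{i=1}^k \overline{\mathbb{B}}_i$, where $\overline{\mathbb{B}}_i$ is the closure of the sheet $\psi^{-1}(\mathbb{B}_i)$, shrunk if necessary. By condition $(iii)$ of Definition~\ref{definicao_fluxo}, the restriction $\varphi_i$ of $\varphi$ to each $\overline{\mathbb{B}}_i$ is a Morse-Smale flow whose only critical element is the hyperbolic singularity $p$, of some Morse index $\lambda_i$. The flow is transverse to $\partial\overline{\mathbb{B}}_i$. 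Thus $\mathcal{R}(\varphi|_N)=\{p\}$, and each $\varphi_i$ is gradient-like on a compact manifold with boundary.

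On each sheet I will invoke the smooth theory already quoted in Subsection~\ref{subsection_lyapunov_function}: Smale/Meyer, or Conley combined with Wilson's smoothing. This gives a Lyapunov function $f_i\colon\overline{\mathbb{B}}_i\to\mathbb{R}$ that is smooth, strictly decreasing along nonconstant orbits, and constant on $\{p\}$. Concretely, hyperbolicity lets me conjugate the flow near $p$ to the linear saddle flow. There I take the quadratic model $-\|x_u\|^2+\|x_s\|^2$ (unstable and stable coordinates) and extend it to the whole ball by the standard argument on the annulus between a small level-set neighborhood and the boundary. After adding a constant, I will normalize every piece so that $f_i(p)=0$ for all $i$.

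Next I define $f\colon N\to\mathbb{R}$ by $f|_{\overline{\mathbb{B}}_i}=f_i$. This is well defined because the sheets meet only at $p$, where all $f_i$ equal $0$. It is continuous because $N$ carries the quotient (wedge) topology and each $f_i$ is continuous. I then check the conditions of Definition~\ref{definition_lyapunov_function}:
\begin{enumerate}[label=(\roman*), leftmargin=*, font=\itshape, align=left]
    \item $f$ is smooth on $N\setminus\{p\}$, since this set is a disjoint union of the punctured sheets.
    \item $\frac{d}{dt}f(\varphi(t,x))<0$ for $x\neq p$, since orbits of $\varphi$ starting off $p$ stay in a single sheet, where $f_i$ decreases.
    \item $f$ is trivially constant on the single recurrent component $\{p\}$.
\end{enumerate}

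The main obstacle is the smooth step on each sheet, which rests on two points. The first is arranging strict decrease near the boundary: transversality of $\varphi_i$ to $\partial\overline{\mathbb{B}}_i$ should allow the classical construction, or a collar argument that glues $f_i$ to a function of the flow time. The second is ensuring that the normalization at $p$ is compatible with the linearizing conjugacy being only a homeomorphism. If the conjugacy is merely $C^0$, I would instead use a Conley/Wilson smooth Lyapunov function on each punctured sheet, which is all condition $(i)$ requires, and only ask for continuity at $p$.
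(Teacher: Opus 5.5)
Your proposal is correct and has the same structure as the paper's proof: take $N=\bigvee_{i=1}^k\overline{\mathbb{B}}_i$, put a Lyapunov function on each closed sheet normalized to vanish at $p$, and glue in the wedge topology. The only difference is how you get the function on each sheet: the paper writes it explicitly as $\|x\|^2$, $-\|x_u\|^2+\|x_s\|^2$, or $-\|x\|^2$ in coordinates where the flow is taken to be linear on the whole ball, and computes $\frac{d}{dt}f(\varphi(t,x))<0$ directly, whereas you take it from the smooth Smale--Meyer/Conley--Wilson theory, which avoids needing a smooth global linearization on each sheet (the issue you rightly flag).
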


\begin{proof}
Let $p \in \operatorname{M}(\mathcal{C}_k)$ be a spherical-cone singularity whose nature is described by the word 
\[
    a^{\eta_0} s_1^{\eta_1} s_2^{\eta_2}\ldots s_{n-1}^{\eta_{n-1}} r^{\eta_n}.
\]
Let $N$ be a compact neighborhood of $p$ in $X$ that, up to homeomorphism, is given by
\[
    N = \bigvee_{i=1}^k \overline{\mathbb{B}}_i,    
\]
where each $\overline{\mathbb{B}}_i$ denotes a copy of the closed unit ball $\overline{\mathbb{B}}=\{x \in \mathbb{R}^n \mid \|x\| \leq 1\}$. Since the restriction of $\varphi$ to each $\overline{\mathbb{B}}_i$ is a Morse-Smale flow without periodic orbits, we can characterize $\varphi$ locally on $N$. On each $\overline{\mathbb{B}}_i$, there exists a local coordinate system $(x_1,\dots , x_n)$ in which the flow $\varphi$ takes one of the following normal forms:
\begin{itemize}
\item the flow $\varphi^a \colon \mathbb{R}\times \overline{\mathbb{B}}^a_i \to \overline{\mathbb{B}}_i$ is given by
\[
    \varphi^a (t, x) = (e^{-t}x_1, \dots , e^{-t}x_n),
\]
yielding an attracting singularity;
\item the flow $\varphi^{s_\lambda} \colon \mathbb{R}\times \overline{\mathbb{B}}^{s_\lambda}_i \to \overline{\mathbb{B}}_i$ is given by
\[
    \varphi^{s_\lambda} (t, x) = (e^{t}x_1, \dots, e^{t}x_{\lambda}, e^{-t}x_{\lambda+1}, \dots , e^{-t}x_n),
\]
yielding a saddle singularity;
\item the flow $\varphi^r \colon \mathbb{R}\times \overline{\mathbb{B}}^r_i \to \overline{\mathbb{B}}_i$ is given by
\[
    \varphi^r (t, x) = (e^{t}x_1, \dots , e^{t}x_n),
\]
yielding a repelling singularity.
\end{itemize}

Here, $\varphi^a$ and $\varphi^r$ denote the restrictions of $\varphi$ to the sheets containing an attracting and a repelling singularity, respectively, while $\varphi^{s_\lambda}$ represents the restriction of $\varphi$ to a sheet containing a saddle singularity of Morse index $\lambda$, for each $\lambda \in \{ 1, \dots, n-1 \}$.  Accordingly, we denote the sheet by $\overline{\mathbb{B}}^{a}_i$, $\overline{\mathbb{B}}^{r}_i$, or $\overline{\mathbb{B}}^{s_\lambda}_i$ depending on whether the restriction of $\varphi$ to $\overline{\mathbb{B}}_i$ exhibits an attracting, repelling, or saddle singularity of Morse index $\lambda$. Thus, in these coordinates, we have
    \begin{align*}
        \varphi (t,x) =
        \begin{cases}
            p, & \text{if } x=p;\\
            \varphi^a(t,x), & \text{if } x\in \overline{\mathbb{B}}^a_i\setminus \{ p\};\\
            \varphi^{s_\lambda}(t,x), & \text{if } x\in \overline{\mathbb{B}}^{s_\lambda}_i\setminus \{ p\};\\
            \varphi^r(t,x), & \text{if } x\in \overline{\mathbb{B}}^r_i \setminus \{p\}.
        \end{cases}
    \end{align*}
Associated with each of these local models, we define the following local functions:
\begin{align*}
f^a \colon \overline{\mathbb{B}}^a_i  \to \mathbb{R},  & \quad  f^a(x_1, \dots, x_n)  \coloneqq   x_1^2 + \cdots + x_n^2; \\[1em]
f^{s_\lambda} \colon  \overline{\mathbb{B}}^{s_\lambda}_i  \to \mathbb{R}, & \quad f^{s_\lambda}(x_1, \dots, x_n)  \coloneqq   -x_1^2 - \cdots - x_{\lambda}^2 + x_{\lambda+1}^2 + \cdots + x_n^2; \\[1em]    
f^r \colon \overline{\mathbb{B}}^r_i \to \mathbb{R},  & \quad f^r(x_1, \dots, x_n)  \coloneqq   -(x_1^2 + \cdots + x_n^2).
\end{align*}

Note that $f^a(0)=f^{s_\lambda}(0)=f^r(0)=0$ at the singularity $p$ in these coordinates for all $\lambda \in \{ 1, \dots, n-1 \}$. This allows us to continuously glue these functions at $p$. Finally, we define $f \colon N \to \mathbb{R}$ by
\[
    f(x) \coloneqq 
    \begin{cases}
        f^a(x), & \text{if } x \in \overline{\mathbb{B}}^a_i; \\
        f^{s_\lambda}(x), & \text{if } x \in \overline{\mathbb{B}}^{s_\lambda}_i; \\
        f^r(x), & \text{if } x \in \overline{\mathbb{B}}^r_i.
    \end{cases}
\]

We claim that $f$ is a Lyapunov function for $\varphi$ on $N$. Indeed, the restriction
\[
\restr{f}{N \setminus  \operatorname{Sing}(N)} = \restr{f}{N \setminus \{p\}}
\]
is smooth and has no critical points. Furthermore, we can verify the strict decrease of $f$ along regular flow lines by evaluating $\frac{d}{dt} \restr{f}{ N\setminus \{p\}} (\varphi(t,x))$ in each sheet:
   
\begin{itemize}
\item If $x\in \overline{\mathbb{B}}^a_i \setminus \{p \}$, then
\begin{align*}
    \frac{d}{dt}\left(  \restr{f}{ N\setminus \{p\}}\circ \varphi(t,x) \right) &= \frac{d}{dt}\left( f^a(\varphi^a(t,x))  \right)\\
    &= \frac{d}{dt}\left( e^{-2t}x^2_1 + \cdots + e^{-2t}x^2_n \right)\\
    &=-2e^{-2t} \left( x^2_1 + \cdots + x^2_n \right)<0.    
\end{align*}

\item If $x\in \overline{\mathbb{B}}^{s_\lambda}_i \setminus \{p \}$, then
\begin{align*}
    \frac{d}{dt}\left(  \restr{f}{ N\setminus \{p\}}\circ \varphi(t,x) \right) &= \frac{d}{dt}\left( f^{s_\lambda}(\varphi^{s_\lambda}(t,x))  \right)\\
    &= \frac{d}{dt}\left( -e^{2t}x^2_1 - \cdots - e^{2t}x^2_\lambda + e^{-2t}x^2_{\lambda +1} + \cdots + e^{-2t}x^2_{n} \right)\\
    &=-2\left( e^{2t}x^2_1 + \cdots + e^{2t}x^2_\lambda +e^{-2t}x^2_{\lambda +1} + \cdots + e^{-2t}x^2_n \right) < 0.   
\end{align*}

    \item If $x\in \overline{\mathbb{B}}^r_i \setminus \{p \}$, then
    \begin{align*}
        \frac{d}{dt}\left(  \restr{f}{ N\setminus \{p\}}\circ \varphi(t,x) \right) &= \frac{d}{dt}\left( f^r(\varphi^r(t,x))  \right)\\
        &= \frac{d}{dt}\left( -e^{2t}x^2_1 - \cdots - e^{2t}x^2_n \right)\\
        &=-2 e^{2t} \left(x^2_1 + \cdots + x^2_n \right)<0.    
    \end{align*}
    \end{itemize}
Hence, in all cases, $\frac{d}{dt} \restr{f}{N \setminus \{p\}} (\varphi(t,x)) < 0$, and $f(p)$ is the unique critical value of $f$. This concludes the proof that $f$ is a Lyapunov function for $\varphi$ on the neighborhood $N$. 
\end{proof}

In our setting, Lemma~\ref{lemma_lyapunov_local} constitutes the primary step toward the construction of global Lyapunov functions. The subsequent lemmas follow as direct consequences of the local functions provided by Lemma~\ref{lemma_lyapunov_local}. These results adapt classical techniques from~\cite{Smale1961}, and we explicitly detail modifications to their proofs where appropriate.

Let $X$ be a compact $n$-pseudomanifold with spherical-cone singularities with possibly nonempty boundary $\partial X$, and let  $\varphi$ be a singular Morse-Smale flow on $X$. Throughout the paper, we assume that $\varphi$ is transverse to the boundary, which allows us to decompose $\partial X$ into exactly two disjoint sets: the exit set $\partial X^-$, and the entrance set $\partial X^+ \coloneqq \partial X \setminus \partial X^-$.

The sequence $(G_i)_i$ established in the following lemma constitutes what is known as a \textit{filtration} for a Morse-Smale flow. For further details regarding the smooth case, we refer the reader to~\cite[Chapter 4]{palis1982geometric}.

\begin{lemma}
\label{Lema_sequencia}
Let $X$ be a compact $n$-pseudomanifold with spherical-cone singularities and possibly nonempty boundary $\partial X$, endowed with a singular Morse-Smale flow $\varphi$. Assume the chain recurrent set of $\varphi$ is given by $\mathcal{R}(\varphi) = \{ \sigma_1, \dots , \sigma_{m_0}\}$. Then there exists a collection of pairwise disjoint, closed, $(n-1)$-dimensional submanifolds $B_0, B_1, \dots, B_{m_0}$ of $X$ satisfying the following properties:
\begin{enumerate}[label=(\alph*), leftmargin=*, font=\itshape, align=left]
    \item $B_0 = \partial X^-$ and $B_{m_0} = \partial X^+$;
    \item the flow $\varphi$ is transverse to each $B_i$;
    \item for each $i \notin \{0, m_0\}$, the submanifold $B_i$ divides $X$ into two regions whose closures are denoted by $G_i$ and $H_i$. These regions satisfy
    \[
        G_{i-1} \subset G_i \quad \text{and} \quad H_{i+1} \subset H_i,
    \]
    and $G_i$ contains exactly $i$ critical elements of $\varphi$. Moreover, by setting $G_0 \coloneqq B_0$, $G_{m_0} \coloneqq X$, $H_0 \coloneqq X$, and $H_{m_0} \coloneqq B_{m_0}$, we have
    \[
        G_i \cap H_i = B_i \quad \text{and} \quad G_i \cup H_i = X,
    \]
    for all $i \in \{0, \dots, m_0\}$; 
    \item $B_i$ is the entrance boundary of $G_i$ with respect to $\varphi$.
\end{enumerate}
\end{lemma}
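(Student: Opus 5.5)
We adapt Smale's filtration construction from~\cite{Smale1961} (see also~\cite[Chapter~4]{palis1982geometric}). The only new ingredient is control near the cone points, which Lemma~\ref{lemma_lyapunov_local} provides.

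\textbf{Step 1: ordering.} Order $\mathcal{R}(\varphi)=\{\sigma_1,\dots,\sigma_{m_0}\}$ compatibly with the relation of Definition~\ref{definition_ordem_singularidades}. Since $\mathcal{R}(\varphi)=\operatorname{Sing}(\varphi)\cup\operatorname{Per}(\varphi)$ by condition $(i)$ of Definition~\ref{definicao_fluxo}, there are no homoclinic or heteroclinic cycles, even through cone points. Such a cycle would produce chain-recurrent points outside the critical elements. Hence the relation generates a partial order, and we fix a linear extension in which $\sigma_j\le\sigma_l$ implies that $\sigma_l$ comes before $\sigma_j$. Thus $G_i$ will be a sub-level-type neighborhood of the ``lowest'' $i$ elements. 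Then, inductively, $A_i=\bigcup_{j\le i}W^s(\sigma_j)$ together with $\partial X^-$ should be an attractor-type set whose complement is exhausted by the remaining unstable manifolds.

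\textbf{Step 2: local isolating blocks.} Around each smooth critical element, take the standard isolating blocks from the Morse--Smale theory (condition $(ii)$). Around each cone point $p$, take the block $N=\bigvee_i\overline{\mathbb{B}}_i$ carrying the local Lyapunov function $f$ of Lemma~\ref{lemma_lyapunov_local}. Replace $N$ by $\{|f|\le\varepsilon\}$ intersected with a tube around $W^s_{loc}(p)\cup W^u_{loc}(p)$, built sheet by sheet. Its boundary then splits into an entrance part $\{f=\varepsilon\}$ and an exit part $\{f=-\varepsilon\}$, and both are smooth $(n-1)$-submanifolds of the regular part, since $f$ has no critical points on $N\setminus\{p\}$. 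Away from $p$, the cone point plays no role: all hypersurfaces we build lie in $X\setminus\operatorname{Sing}(X)$, where $\varphi$ is an honest Morse--Smale flow.

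\textbf{Step 3: induction.} Set $G_0=B_0=\partial X^-$. Given $G_{i-1}$ with entrance boundary $B_{i-1}$ transverse to $\varphi$, let $G_i$ be $G_{i-1}$ together with the isolating block $N_i$ of $\sigma_i$ and all orbit segments from the exit set of $N_i$ down to $B_{i-1}$. By the ordering, the exit set of $N_i$ flows into $G_{i-1}$ in finite time. By compactness, a uniform time $T$ suffices. Then set $B_i$ to be the part of $\partial(G_{i-1}\cup\varphi([0,T],\cdot)\cup N_i)$ that is entering, and smooth the corners. Corner smoothing is done by the standard Smale rounding with a transverse vector-field argument; this is legitimate because the corners lie in the regular part. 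Properties $(b)$ and $(d)$ hold by construction, and $G_i$ contains exactly $\sigma_1,\dots,\sigma_i$. Define $H_i=\overline{X\setminus G_i}$. Then $G_i\cap H_i=B_i$ and $G_i\cup H_i=X$, and the nestedness follows. At the end, $G_{m_0}=X$ and $B_{m_0}=\partial X^+$.

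\textbf{Main obstacle.} The hard step is ensuring that, when $\sigma_i$ is a cone point, the attached block glues to $G_{i-1}$ along a manifold. The exit set of $N_i$ is a disjoint union over the $k-\eta_0$ non-attracting sheets. Each piece is a manifold with boundary, and all of them lie in the regular part. So the attaching and rounding can be carried out sheet by sheet, and $B_i$ remains a closed $(n-1)$-submanifold avoiding $\operatorname{Sing}(X)$. I would verify this carefully using the explicit normal forms from the proof of Lemma~\ref{lemma_lyapunov_local}.
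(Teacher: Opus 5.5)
Your overall strategy matches the paper's. Both reduce to Smale's inductive construction, use Lemma~\ref{lemma_lyapunov_local} to handle each cone point sheet by sheet, and rely on the fact that every new hypersurface and every corner lies in $X\setminus\operatorname{Sing}(X)$, where the classical arguments apply.

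You differ from the paper in the local step. The paper follows Smale's Lemma~2.1 on the \emph{stable} side: it takes the spheres $f^{-1}(\delta)\cap W^s(p)$ on the local entrance level, together with $\epsilon$-normal disk bundles of the $k-\eta_n$ pieces of $W^s(p)\setminus\{p\}$ over them. You work dually on the \emph{unstable} side, attaching a thin block to $G_{i-1}$ along its exit set by the flow. This makes $G_i$ a neighborhood of the positively invariant set $G_{i-1}\cup W^u(\sigma_i)$, so it is transparent that $G_i$ is positively invariant and contains exactly $\sigma_1,\dots,\sigma_i$.

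Your Step~1 also justifies something the paper only asserts. The relation of Definition~\ref{definition_ordem_singularidades} need not be transitive through a cone point, since one orbit may enter $p$ along one sheet and another leave along a different sheet. So one needs acyclicity of the order the relation generates, and your remark that a cycle through cone points would create chain-recurrent non-critical points supplies it. Your orientation of the linear extension, with downstream elements first, is the one compatible with $G_{i-1}$ being positively invariant.

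Several details need tightening.
\begin{itemize}
\item The entering part of $\partial\bigl(G_{i-1}\cup\varphi([0,T],\cdot)\cup N_i\bigr)$ is not a closed hypersurface. The boundary also contains flow-tangent walls: the lateral walls of the flow tube (orbit arcs from the boundary of the exit set of $N_i$ down to $B_{i-1}$) and, on saddle-type sheets, the side walls of the block. For the same reason, your Step~2 claim that $\partial N_i$ splits into entrance and exit parts is inaccurate. These walls must be tilted, in flow-box coordinates, to a nearby transverse hypersurface before the corners are rounded. This is possible because the arcs have bounded length and lie in the regular part.
\item As constructed, $B_i$ contains most of $B_{i-1}$, which contradicts pairwise disjointness. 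Flow the new hypersurface backward for a short time, or use a collar of $B_{i-1}$.
\item The thinness of $N_i$ must be chosen after $G_{i-1}$ is fixed. Then exit points off $W^u(\sigma_i)$ also cross $B_{i-1}$, by openness of transverse crossing and compactness of $W^u(\sigma_i)$ intersected with the exit set. Thinness together with the absence of homoclinic orbits also keeps the tube from re-entering $N_i$.
\item The heuristic in Step~1 is reversed. The set $\bigcup_{j\le i}W^s(\sigma_j)$ is a basin, not an attractor. What is actually needed is $\bigcup_{j\le i}W^u(\sigma_j)\subset G_i\subset X\setminus\bigcup_{j>i}W^s(\sigma_j)$.
\end{itemize}
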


\begin{proof}
First, we can establish a partial order on $\mathcal{R}(\varphi)$ using the order given in Definition~\ref{definition_ordem_singularidades}, since the definitions of stable and unstable manifolds extend naturally for spherical-cone singularities, see Remark~\ref{remark_stable_unstable_cone}.

The idea of the proof is to construct a sequence of sets $(B_i)_{i=0}^{m_0}$ with the desired properties by induction. To this end, we start by defining $B_0 \coloneqq \partial X^-$. As our induction hypothesis, assume that we have constructed a sequence of sets up to some $B_{i-1}$, and consequently $G_{i-1}$ and $H_{i-1}$, satisfying
\[
    X = G_{i-1} \cup H_{i-1} \quad \text{and} \quad B_{i-1} = G_{i-1} \cap H_{i-1},
\]
such that $G_{i-1}$ contains exactly $i-1$ critical elements of $\varphi$, respecting the partial order given in Definition~\ref{definition_ordem_singularidades}. 

To construct $B_i$, and consequently $H_i$ and $G_i$, we consider a neighborhood of $B_{i-1}$ of the form
\[
    B_{i-1} \times [-1,1]
\]
(for $i=1$, we instead take $B_0 \times [0,1]$). Here, we identify $B_{i-1}$ with $B_{i-1} \times \{0\}$, hence $B_{i-1} \times [0,1] \subset H_{i-1}$, and the flow $\varphi$ is transverse to  $B_{i-1} \times \{t\}$ for each $t$.

Now, let $p \in \operatorname{M}(\mathcal{C}_k)$ be an arbitrary spherical-cone singularity with nature described by the word
\[
a^{\eta_0}s_1^{\eta_1}\dots s_{n-1}^{\eta_{n-1}}r^{\eta_n},
\]
such that $p \in X \setminus G_{i-1}$ and $\sigma \leq p$ for some critical element $\sigma \in G_{i-1}$, as given by Definition~\ref{definition_ordem_singularidades}. It suffices to consider the case where $p$ is a singular point, as singularities in the regular stratum are already addressed by classical arguments in~\cite{Smale1961} and~\cite[Chapter 4, Lemma 1.2]{palis1982geometric}. 

By Lemma~\ref{lemma_lyapunov_local}, there exist a neighborhood $N$ of $p$ in $X$ and a local Lyapunov function $f \colon N \to \mathbb{R}$. We may choose $\delta > 0$ such that the level set
\[
    \widetilde{W} \coloneqq f^{-1}(\delta) \cap W^s(p)
\]
is contained entirely within $N$. Note that $W^s(p) \setminus \{ p \}$ is a disjoint union of $k-\eta_n$ submanifolds, containing exactly $\eta_{\lambda}$ submanifolds of dimension $\lambda$ for each $\lambda \in \{1, 2, \dots, n-1\}$, see Remark~\ref{remark_stable_unstable_cone}. Thus, we can take $k-\eta_n$ normal bundles $E_{\epsilon, j}$ of $W^s(p) \setminus \{p\}$ in $X$ restricted to $\widetilde{W}$, considering only vectors of norm at most $\epsilon$. Consequently, we can follow exactly the same geometric construction steps as in the proof of~\cite[Lemma 2.1]{Smale1961}, or analogously~\cite[Lemma 2.5]{Montufar}, to construct the set $B_i$ and the corresponding sets $G_i$ and $H_i$.

By hypothesis, $\mathcal{R}(\varphi) = \{\sigma_1, \dots, \sigma_{m_0}\}$ is finite set. Therefore, the inductive process must terminate. This yields a finite sequence of submanifolds $B_1, \dots, B_{m_0}$ satisfying all necessary properties. 
\end{proof}

\begin{lemma}
\label{lemma_constantes_lyapunov}
Let $X$ be a compact $n$-pseudomanifold with spherical-cone singularities, with possibly nonempty boundary $\partial X$, and  let $\varphi$ be a singular Morse-Smale flow on $X$ that contains exactly one critical element $\sigma$ in $X$. Then, for any given $c \in \mathbb{R}$, there exists a Lyapunov function $f \colon X \to \mathbb{R}$ for $\varphi$ such that $f \equiv c-\frac{1}{2}$ on $\partial X^-$, $f \equiv c+\frac{1}{2}$ on $\partial X^+$, and $f(\sigma) = c$.
\end{lemma}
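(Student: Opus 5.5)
The plan is to reduce to the local Lyapunov function of Lemma~\ref{lemma_lyapunov_local} near $\sigma$ and then extend it to all of $X$ along the flow, following Smale's argument in~\cite{Smale1961}. If $\sigma$ lies in the regular part of $X$ (a hyperbolic singularity or periodic orbit), then by condition $(ii)$ of Definition~\ref{definicao_fluxo} the flow near $\sigma$ is a smooth Morse--Smale flow, and the classical construction applies verbatim. So the essential case is $\sigma=p\in\operatorname{M}(\mathcal{C}_k)$.

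\textbf{Step 1 (local function).} Take the neighborhood $N=\bigvee_{i=1}^k\overline{\mathbb{B}}_i$ and the function $f_0\colon N\to\mathbb{R}$ from Lemma~\ref{lemma_lyapunov_local}, so that $f_0(p)=0$ and $f_0$ strictly decreases along nonstationary orbits in $N$. Rescale by a positive constant so that $|f_0|<\tfrac14$ on $N$, and add $c$. Choose $\delta>0$ small and form the level pieces $N^{\pm}=\{f_0=\pm\delta\}$ in each sheet, together with the tube pieces of $\partial N$ on which the flow is tangent. Shrinking $N$ to a standard isolating block $N'$ (bounded by the $\pm\delta$ level sets and by tubes transverse to the level sets), the flow then enters $N'$ through $N'^+\subset\{f_0=\delta\}$, exits through $N'^-\subset\{f_0=-\delta\}$, and is tangent elsewhere.

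\textbf{Step 2 (flow-box extension).} Since $\sigma$ is the only critical element, every $x\in X\setminus\{p\}$ has $\alpha(x)\in\{p\}$ or $\alpha(x)$ undefined (the backward orbit meets $\partial X^+$). Likewise $\omega(x)=p$ or the forward orbit meets $\partial X^-$; this uses condition $(i)$ and transversality $(iv)$. For points whose orbit does not meet $\operatorname{int}N'$, the orbit runs from $\partial X^+$ to $\partial X^-$ in finite time. Define $\tau^+(x)$ and $\tau^-(x)$ as the hitting times of the entrance and exit sets, where these sets are $\partial X^+$ or $\partial N'^{+}$ and $\partial X^-$ or $\partial N'^{-}$, respectively. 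Set $f$ to interpolate monotonically in time between the prescribed boundary values:
\begin{itemize}
\item $c+\tfrac12$ on $\partial X^+$ and $c+\delta$ on $N'^+$;
\item $c-\tfrac12$ on $\partial X^-$ and $c-\delta$ on $N'^-$.
\end{itemize}
A convenient choice is a smooth increasing reparametrisation $\rho\bigl(\tau^-(x)/(\tau^+(x)+\tau^-(x))\bigr)$. For orbits avoiding $N'$, interpolate from $c+\tfrac12$ to $c-\tfrac12$. The tube pieces of $\partial N'$ require the usual corner smoothing, as in~\cite[Theorem B]{Smale1961}. The hitting times are smooth because the flow is transverse to $\partial X^{\pm}$ and to the level sets, by the implicit function theorem on the regular part.

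\textbf{Main obstacle.} The hard step is making the pieces match along the tangential part of $\partial N'$ and near the stable and unstable sets of $p$. There the times $\tau^{\pm}$ blow up, so continuity of $f$ at $W^s(p)\cup W^u(p)$ must be checked. To handle this, I would adopt Smale's device: modify $f_0$ by a function of the form $g(f_0,\,|x_{\mathrm{u}}|^2)$, which stays equal to $f_0$ near $p$ and is constant along orbit segments crossing the level sets. This modification is done sheetwise, and the sheets meet only at $p$, where $f=c$. Because of that, the cone point adds no difficulty beyond continuity at $p$, which holds since every sheet's function tends to $c$.

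Finally, verify the three conditions of Definition~\ref{definition_lyapunov_function}: smoothness off $p$, strict decrease along regular orbits (from $\rho'>0$ and Lemma~\ref{lemma_lyapunov_local}), and $f(\sigma)=c$. The case where $\sigma$ is a periodic orbit is the same with the local function taken from~\cite{Smale1961}.
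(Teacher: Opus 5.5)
Your proposal follows the same route as the paper: reduce to $\sigma=p\in\operatorname{M}(\mathcal{C}_k)$ (the regular case is classical, since $X$ is then smooth), shift the local function of Lemma~\ref{lemma_lyapunov_local} so that $f(p)=c$, take the level pieces at $c\pm\delta$, and run Smale's extension argument \cite[Lemma 2.2]{Smale1961}, which the paper also simply invokes. Your explicit flow-time interpolation and your identification of the tangential part of $\partial N'$ as the only delicate point go beyond the paper's level of detail; the one slip is that periodic orbits in the regular case are covered by \cite{meyer1968energy}, not \cite{Smale1961}.
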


\begin{proof}
As in the previous proof, it suffices to consider the case where $p \in \operatorname{M}(\mathcal{C}_k)$ is a spherical-cone singularity, since critical elements in the regular stratum have already been addressed by classical arguments in~\cite{Smale1961} and~\cite{meyer1968energy}.  Lemma~\ref{lemma_lyapunov_local} guarantees the existence of a neighborhood $N$ of $p$ and a local Lyapunov function $f \colon N \to \mathbb{R}$ for $\varphi$. Given $c \in \mathbb{R}$, by adding a constant if necessary, we may assume that $f(p)=c$. 

Taking $\delta > 0$ as in the proof of Lemma~\ref{Lema_sequencia}, we define the local level sets
\[
    R^{+} \coloneqq f^{-1}(c+\delta) \cap N \quad \text{and} \quad R^{-} \coloneqq f^{-1}(c-\delta) \cap N. 
\]
Using these sets $R^+$ and $R^-$, the construction of the desired global Lyapunov function on $X$ follows exactly the same geometric steps as the proof of~\cite[Lemma 2.2]{Smale1961}, or analogously~\cite[Lemma 2.6]{Montufar}. 
\end{proof}

\begin{theorem}
\label{theorem_lyapunov_global}
Every singular Morse-Smale flow on a compact $n$-pseudomanifold with spherical-cone singularities admits a Lyapunov function.
\end{theorem}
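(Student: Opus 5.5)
The plan is the classical Smale--Meyer gluing argument, with Lemma~\ref{Lema_sequencia} and Lemma~\ref{lemma_constantes_lyapunov} as the two ingredients. Write $\mathcal{R}(\varphi)=\{\sigma_1,\dots,\sigma_{m_0}\}$; this set is finite by conditions $(i)$--$(iii)$ of Definition~\ref{definicao_fluxo}, because the cone singularities are isolated and the regular part carries a Morse-Smale flow. Apply Lemma~\ref{Lema_sequencia} to obtain the filtration $B_0,\dots,B_{m_0}$ together with the sets $G_i$ and $H_i$. Define the \emph{elementary cobordisms} $X_i \coloneqq \overline{G_i\setminus G_{i-1}} = G_i\cap H_{i-1}$ for $i=1,\dots,m_0$. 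By property $(c)$, each $X_i$ contains exactly one critical element, which after relabeling we call $\sigma_i$. By properties $(b)$ and $(d)$, $\varphi$ is transverse to $\partial X_i = B_{i-1}\sqcup B_i$ (together with the relevant parts of $\partial X$), with $B_{i-1}$ as exit set and $B_i$ as entrance set. Each $X_i$ is a compact $n$-pseudomanifold with spherical-cone singularities, and the restricted flow is again a singular Morse-Smale flow.

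Next, apply Lemma~\ref{lemma_constantes_lyapunov} to each $X_i$ with constant $c=i$. This gives a Lyapunov function $f_i\colon X_i\to\mathbb{R}$ with $f_i\equiv i-\tfrac12$ on $B_{i-1}$, $f_i\equiv i+\tfrac12$ on $B_i$, and $f_i(\sigma_i)=i$. The cobordisms are glued along their common boundaries: $X_i\cap X_{i+1}=B_i$, where $f_i\equiv i+\tfrac12 = f_{i+1}$. Hence $f\coloneqq f_i$ on $X_i$ is a well-defined continuous function on $X=\bigcup_i X_i$. It is constant on each $\sigma_i$, it takes the distinct values $i$ on distinct critical elements, and it is strictly decreasing along every non-recurrent orbit. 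The decrease holds because each orbit segment inside a single $X_i$ decreases $f_i$, and an orbit crosses each $B_i$ transversally from $X_{i+1}$ into $X_i$.

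The main obstacle is smoothness of $f$ across the gluing hypersurfaces $B_i$, since condition $(i)$ of Definition~\ref{definition_lyapunov_function} requires $f$ to be smooth off $\operatorname{Sing}(X)$. The $B_i$ lie in the regular part, because Lemma~\ref{Lema_sequencia} produces them inside the collars $B_{i-1}\times[-1,1]$, away from the cone points. So the issue is purely classical. My plan is to modify each $f_i$ in a thin collar $B_i\times[-\epsilon,\epsilon]$, using the flow-box coordinates provided by transversality. There I can take $f_i(\varphi(t,b)) = i+\tfrac12 - t$, so the functions match to infinite order on both sides. This normalization is done as in~\cite[Theorem~B]{Smale1961}: reparametrize $f_i$ along flow lines in the collar by a monotone function of the flow time. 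Finally, the cone singularities themselves only require continuity of $f$. That is already provided by the local model of Lemma~\ref{lemma_lyapunov_local}, which Lemma~\ref{lemma_constantes_lyapunov} uses near each $p\in\operatorname{Sing}(X)$. This completes the construction of the global Lyapunov function.
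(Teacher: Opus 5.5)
Your proposal is correct and follows the paper's proof: the filtration of Lemma~\ref{Lema_sequencia}, a Lyapunov function on each elementary piece $\overline{G_i\setminus G_{i-1}}$ from Lemma~\ref{lemma_constantes_lyapunov}, and gluing along the $B_i$. Your choice $c=i$ makes the boundary values $i\pm\tfrac12$ agree on each $B_i$, and your collar smoothing is legitimate because each $B_i$ is transverse to $\varphi$ and so misses the cone points, which are fixed points; together these handle the boundary matching more explicitly than the paper, which only takes pairwise distinct $c_i$ and appeals to an auxiliary interpolating function $\widetilde{f}$.
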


\begin{proof}
Let $X$ be compact $n$-pseudomanifold with spherical-cone singularities endowed with a singular Morse-Smale flow $\varphi$.
First, consider the filtration $(G_i)_{i=0}^{m_0}$ obtained from Lemma~\ref{Lema_sequencia}. Observe that for each $i \in \{1, \dots, m_0\}$, the compact set 
\[
    F_i \coloneqq \overline{G_i \setminus G_{i-1}}
\]
contains exactly one critical element, namely $\sigma_i$. By Lemma~\ref{lemma_constantes_lyapunov}, we can choose a Lyapunov function $f_i \colon F_i \to \mathbb{R}$ for $\varphi$ such that $f_i(\sigma_i) = c_i$. Furthermore, the space
\[
    \overline{X \setminus \bigcup_{i=1}^{m_0} F_i}
\]
is a compact, smooth $n$-manifold containing no critical elements of $\varphi$. Thus, there exists a Lyapunov function $\widetilde{f}$ on this compact manifold that agrees with each $f_i$ on their respective boundaries. Moreover, since the set of critical elements is finite, we may choose the constants $c_i$ to be pairwise distinct. By gluing the functions $f_i$ and $\widetilde{f}$, we obtain a continuous global function $f \colon X \to \mathbb{R}$ satisfying all the properties of Definition~\ref{definition_lyapunov_function}. 

Therefore, $f$ is the desired global Lyapunov function for $\varphi$ on $X$. 
\end{proof}

\subsection{Morsification of Singular Morse-Smale Flows}
\label{subsection_morsification}

The concept of morsification was originally introduced in~\cite{Lima2021} in the context of $2$-dimensional singular sets, where it was used as a tool to define the intersection number between singularities in the singular setting and to extend the Morse complex to GS surfaces. 

In this section, we construct a morsification that  plays a central role in our subsequent results. In particular, we use it in Subsection~\ref{subsection_graph} to characterize the associated Lyapunov graph, in Subsection~\ref{subsection_morsification_characteristic} to relate the Euler-Poincaré characteristic of an $n$-pseudomanifold with spherical-cone singularities to that of the smooth $n$-manifold obtained via morsification, and in Section~\ref{section_IH} to derive results in intersection homology. We begin by presenting the formal definition of morsification in our setting.

\begin{definition}
\label{def:morsificacao}
Let $X$ be an $n$-pseudomanifold with spherical-cone singularities, endowed with a singular Morse-Smale flow $\varphi$. A quadruple $(\widetilde{X}, \widetilde{\varphi}, \mathfrak{h}, \mathfrak{p})$ is called a \textit{morsification} of $(X, \varphi)$ if it satisfies the following conditions:
    \begin{enumerate}[label=(\roman*), leftmargin=*, font=\itshape, align=left]
        \item $\widetilde{X}$ is a smooth $n$-manifold;
        \item $\widetilde{\varphi}$ is a smooth Morse-Smale flow on $\widetilde{X}$; 
        \item $\mathfrak{h}\colon X \to \widetilde{X}$ is a multivalued map such that the restriction $\restr{\mathfrak{h}}{X \setminus \operatorname{Sing}(X)}$ is a homeomorphism;
        \item $\mathfrak{p}\colon \widetilde{X} \to X$ is a projection satisfying $\mathfrak{p} \circ \mathfrak{h} = \operatorname{id}_X$.
    \end{enumerate}
\end{definition}

When no confusion arises, we denote a morsification $(\widetilde{X}, \widetilde{\varphi}, \mathfrak{h}, \mathfrak{p})$ simply by $(\widetilde{X}, \widetilde{\varphi})$. In our setting, such a morsification always exists, as established by the following lemma.

\begin{lemma}
\label{lemma_morsification}
Let $X$ be a compact $n$-pseudomanifold with spherical-cone singularities, endowed with a singular Morse-Smale flow $\varphi$. Then there exists a morsification $(\widetilde{X}, \widetilde{\varphi}, \mathfrak{h}, \mathfrak{p})$ of $(X,\varphi)$.
\end{lemma}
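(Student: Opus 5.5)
We construct $\widetilde{X}$ by ``unwedging'' each spherical-cone singularity: replace every point $p\in\operatorname{M}(\mathcal{C}_k)$ by $k$ distinct points, one for each sheet. Concretely, by finiteness of $\mathcal{R}(\varphi)$ and Definition~\ref{definicao_fluxo}(i), $\operatorname{Sing}(X)$ is finite, and we may choose pairwise disjoint chart neighborhoods $U_p$ with $\psi_p\colon U_p\to\bigvee_{i=1}^{k_p}\mathbb{B}_i$. Define
\[
\widetilde{X}\coloneqq \Bigl(X\setminus \operatorname{Sing}(X)\Bigr)\ \sqcup\ \bigsqcup_{p\in\operatorname{Sing}(X)}\ \bigsqcup_{i=1}^{k_p}\mathbb{B}_i \Big/ \sim,
\]
where each punctured ball $\mathbb{B}_i\setminus\{0\}$ is glued to $\psi_p^{-1}(\mathbb{B}_i)\setminus\{p\}$ via $\psi_p$. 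Since the chart restricted to each punctured sheet is a diffeomorphism onto $\mathbb{B}^n\setminus\{0\}$, the gluing maps are smooth. Hence $\widetilde{X}$ is a smooth $n$-manifold, and it is Hausdorff because distinct sheets are separated by the disjoint balls. Compactness follows because $\widetilde{X}$ is the union of the compact set $\mathfrak{p}^{-1}(X\setminus\bigcup U_p)$ with finitely many closed balls.

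Next define the maps. Let $\mathfrak{p}\colon\widetilde{X}\to X$ be the identity on the regular part and send the center $0_i$ of each $\mathbb{B}_i$ to $p$; this is continuous and finite-to-one. Let $\mathfrak{h}$ be the multivalued map given by $\mathfrak{h}(x)=x$ for regular $x$ and $\mathfrak{h}(p)=\{0_1,\dots,0_{k_p}\}$. Then $\mathfrak{h}|_{X\setminus\operatorname{Sing}(X)}$ is a homeomorphism onto its image, and $\mathfrak{p}\circ\mathfrak{h}=\operatorname{id}_X$. This checks conditions (iii) and (iv) of Definition~\ref{def:morsificacao}.

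For the flow, set $\widetilde{\varphi}(t,\tilde x)=\mathfrak{h}(\varphi(t,\mathfrak{p}(\tilde x)))$ for $\tilde x$ regular, and $\widetilde{\varphi}(t,0_i)=0_i$. This is well defined because $\varphi$ preserves each punctured sheet near $p$: every orbit in a sheet other than $p$ stays in $X\setminus\{p\}$, and Definition~\ref{definicao_fluxo}(iii) implies the sheet-wise flows have $p$ as a fixed point. Continuity at $0_i$ follows from continuity of the sheet restriction $\varphi|_{\overline{\psi^{-1}(\mathbb{B}_i)}}$.

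The remaining task is to show that $\widetilde{\varphi}$ is Morse–Smale in the smooth sense. I expect this to be the main obstacle, since $\varphi$ is only assumed continuous. First, I will use the local normal forms from the proof of Lemma~\ref{lemma_lyapunov_local} to make $\widetilde{\varphi}$ the linear hyperbolic flow on each $\mathbb{B}_i$. Next, I will smooth it using condition (ii), possibly replacing $\widetilde{\varphi}$ by a topologically equivalent flow generated by a smooth vector field, via a Lyapunov function from Theorem~\ref{theorem_lyapunov_global} pulled back by $\mathfrak{p}$. Hyperbolicity of the new rest points $0_i$ then follows from (iii). The chain recurrent set of $\widetilde{\varphi}$ is $\mathfrak{h}(\mathcal{R}(\varphi))$, because $f\circ\mathfrak{p}$ strictly decreases off it. Transversality to $\partial\widetilde{X}$ follows from (iv). Transversality of stable and unstable manifolds holds away from the new points by (ii). Near each $0_i$, the invariant manifolds are the coordinate planes of the linear model, and incoming manifolds from the regular part can be perturbed slightly, away from the balls, to meet them transversally. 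By structural stability this perturbation does not change the qualitative picture, which completes the construction.
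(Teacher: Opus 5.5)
Your proposal follows the paper's proof. In both, each $p\in\operatorname{M}(\mathcal{C}_k)$ is replaced by the disjoint union of its $k$ sheets, $\mathfrak{h}$ and $\mathfrak{p}$ are defined through the homeomorphism of the punctured neighborhoods, the linear normal forms $\varphi^a$, $\varphi^{s_\lambda}$, $\varphi^r$ are put on the separated balls, the pieces are glued back to the regular part, and a small perturbation restores transversality; your extra checks (Hausdorffness, compactness, continuity at the new rest points, smoothness) fill in details the paper leaves implicit. One caution: structural stability cannot justify that this final perturbation leaves the glued flow's connecting orbits unchanged, because that flow is not yet Morse--Smale. Nothing is lost, though, since the definition of a morsification imposes no equivalence between $\widetilde{\varphi}$ and $\varphi$.
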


\begin{proof}
Since $X$ is an $n$-pseudomanifold with spherical-cone singularities, it is sufficient to perform the morsification locally within an isolating neighborhood $N$ of each given $p \in \operatorname{Sing}(X)$. That is, we construct a local quadruple $(\widetilde{N}, \widetilde{\varphi}, \mathfrak{h}, \mathfrak{p})$ such that $\partial N = \partial \widetilde{N}$.

Let $p \in \operatorname{Sing}(X)$ be an arbitrary spherical-cone singularity. By definition, $p \in \operatorname{M}(\mathcal{C}_k)$ for some $k \geq 2$. We can choose an isolating neighborhood $N$ of $p$ that, up to homeomorphism, is the wedge sum $N = \bigvee_{i=1}^k \overline{\mathbb{B}}_i$ with basepoint $p$. Also, let $a^{\eta_0} s_1^{\eta_1} s_2^{\eta_2}\ldots s_{n-1}^{\eta_{n-1}} r^{\eta_n}$ be the word that describes the nature of  $p$.

Consider the space $\widetilde{N}$ defined as the disjoint union $\widetilde{N} = \bigsqcup_{i=1}^k \overline{\mathbb{B}}_i$. Within each connected component, we consider its center and denote the set of these points by
\[
    S = \{ \widetilde{p}_{0}^{1}, \dots, \widetilde{p}_{0}^{\eta_0}, \dots, \widetilde{p}_{\lambda}^{1}, \dots, \widetilde{p}_{\lambda}^{\eta_\lambda}, \dots, \widetilde{p}_{n}^{1}, \dots, \widetilde{p}_{n}^{\eta_n} \} \subset \widetilde{N}.
\]
Observe that $N \setminus \{ p \}$ and $\widetilde{N} \setminus S$ are homeomorphic. Let $g\colon N \setminus \{ p \} \to \widetilde{N} \setminus S$ denote this homeomorphism. We define the multivalued map $\mathfrak{h}\colon N \to \widetilde{N}$ by
\[
    \mathfrak{h}(x) \coloneqq
    \begin{cases}
        S, & \text{if } x = p; \\
        g(x), & \text{if } x \neq p.
    \end{cases}
\]
Conversely, we define the projection map $\mathfrak{p}\colon \widetilde{N} \to N$ by
\[
    \mathfrak{p}(y) \coloneqq
    \begin{cases}
        p, & \text{if } y \in S; \\
        g^{-1}(y), & \text{if } y \notin S.
    \end{cases}
\]
By construction, the restriction $\restr{\mathfrak{h}}{N \setminus \{p\}}$ is a homeomorphism, and it holds that $\mathfrak{p} \circ \mathfrak{h} = \operatorname{id}_N$.

As noted in the proof of Lemma~\ref{lemma_lyapunov_local}, the flow $\varphi\colon \mathbb{R} \times N \to N$ can be written as
\[
    \varphi(t,x) =
    \begin{cases}
        p, & \text{if } x = p; \\
        \varphi^a(t,x), & \text{if } x \in \overline{\mathbb{B}}^{a}_i \setminus \{p\}; \\
        \varphi^{s_\lambda}(t,x), & \text{if } x \in \overline{\mathbb{B}}^{s_\lambda}_i \setminus \{p\}; \\
        \varphi^r(t,x), & \text{if } x \in \overline{\mathbb{B}}^{r}_i \setminus \{p\}.
    \end{cases}
\]
Thus,  define the map  $\widetilde{\varphi}\colon \mathbb{R} \times \widetilde{N} \to \widetilde{N}$ as follows
\[
    \widetilde{\varphi}(t,x) \coloneqq
    \begin{cases}
        \varphi^a(t,x), & \text{if } x \in \overline{\mathbb{B}}^{a}_i; \\
        \varphi^{s_\lambda}(t,x), & \text{if } x \in \overline{\mathbb{B}}^{s_\lambda}_i; \\
        \varphi^r(t,x), & \text{if } x \in \overline{\mathbb{B}}^{r}_i.
    \end{cases}
\]
The map  $\widetilde{\varphi}$ is continuous since $\widetilde{N}$ is equipped with the disjoint union topology. Moreover, $\widetilde{\varphi}$ satisfies the standard properties of a flow, and its set of singularities corresponds exactly to $S$. Hence, $\widetilde{\varphi}$ yields a continuous flows on $\widetilde{N}$. Consequently, the quadruple $(\widetilde{N}, \widetilde{\varphi}, \mathfrak{h}, \mathfrak{p})$ constitutes a local morsification of $(N, \varphi)$.

Finally, to construct a global morsification on $X$, it suffices to apply this local morsification to the isolating neighborhood of each singularity in $\operatorname{Sing}(X)$ and glue the resulting spaces to the regular part of the pseudomanifold. If, upon gluing, the resulting global flow fails to satisfy the necessary Morse-Smale transversality conditions, a standard small perturbation is sufficient to achieve the desired flow, see~\cite{Smale1961}. This yields the global morsification $(\widetilde{X}, \widetilde{\varphi}, \mathfrak{h}, \mathfrak{p})$. 
\end{proof}

\begin{remark}
\label{remark_morsificacao}
The morsification established in~\cite{Lima2021} for $2$-dimensional spherical-cone singularities differs from the one constructed in Lemma~\ref{lemma_morsification}. Throughout this paper, any reference to a morsification refers exclusively to the construction given in Lemma~\ref{lemma_morsification}.
\end{remark}

\subsection{Lyapunov Graphs for Singular Morse-Smale Flows} 
\label{subsection_graph}

Before defining the Lyapunov graph, we recall basic graph-theoretic notions.

A \textit{directed graph} is a triple $G = (V, E, \psi)$ where $V$ and $E$ are finite sets and $\psi$ is a map from $E$ to $V \times V$. The elements of $V$ are called \textit{vertices} and the elements of $E$ are called \textit{edges}. If $e$ is an edge and $ \psi(e)=(u, v)$, we say that $e$ is a \textit{directed edge} from $u$ to $v$, an \textit{outgoing edge} of $u$, and an \textit{incoming edge} of $v$. For simplicity, we write $e = (u, v)$ whenever $\psi(e) = (u, v)$. If $v$ is a vertex, then the number of incoming edges of $v$ is called the \textit{indegree} of $v$ and is denoted by $e^+(v)$; the number of outgoing edges of $v$ is called the \textit{outdegree} of $v$ and is denoted by $e^-(v)$; and the sum of the indegree and the outdegree of $v$ is called the \textit{degree} of $v$ and is denoted $e(v)$, that is, 
\[
e(v)\coloneqq e^+(v) + e^-(v).
\]

Throughout this paper, we use the term \textit{graph} to mean a directed graph.

Let $G_1 = (E_1, V_1, \psi_1)$ and $G_2 = (E_2, V_2, \psi_2)$ be graphs. A \textit{graph map}, denoted by $\Phi\colon G_1\to G_2$, is a pair of maps $\Phi_E\colon E_1\to E_2$ and $\Phi_V\colon V_1\to V_2$ that commutes appropriately with the maps $\psi_1$ and $\psi_2$; that is, for any $e\in E_1$ with $\psi_1(e)=(u,v)$, we have 
\[
    \psi_2\circ \Phi_E (e) = (\Phi_V(u), \Phi_V(v)).
\]
The graphs $G_1$ and $G_2$ are \textit{isomorphic} if there exists a graph map $\Phi\colon G_1\to G_2$ such that $\Phi_E$ and $\Phi_V$ are isomorphisms.

\begin{definition}
\label{definition_Lyapunov_graph}
Let $X$ be a pseudomanifold with spherical-cone singularities, and let $\varphi$ be a singular Morse-Smale flow on $X$. If $f \colon X \to \mathbb{R}$ is a Lyapunov function for $\varphi$, the \textit{Lyapunov graph} $\Gamma_f$ is defined as follows:
\begin{enumerate}[leftmargin=*, label=(\roman*)]
    \item  Define the equivalence relation $\sim_f$ on $X$ by declaring $x \sim_f y$ if and only if $x$ and $y$ belong to the same connected component of a level set of $f$.
    
    \item The graph $\Gamma_f$ is the quotient space $X/{\sim_f}$ endowed with a graph structure: a point $[x] \in X/{\sim_f}$ is a \textit{vertex} of $\Gamma_f$ if and only if the connected component of level set $f^{-1}(f(x))$ contains an element $\sigma \in \mathcal{R}(\varphi)$;  all other points of $X/{\sim_f}$ are  \textit{edges} points.
    
    \item Since $f$ is strictly decreasing along regular trajectories, the graph $\Gamma_f$ is naturally oriented.
\end{enumerate}
\end{definition}

It is usual to enrich the Lyapunov graph by labeling the vertices of the Lyapunov graph with data associated to the critical elements $\sigma\in \mathcal{R}(\varphi)$ (e.g., Conley index, type of singularity, etc.).

In the smooth setting, if the Lyapunov function has finitely many critical values, the quotient space from Definition~\ref{definition_Lyapunov_graph} is a finite directed graph; this result follows from~\cite[Proposition 1.4]{franks1985nonsingular}.  A similar result holds in our singular setting, and the proof follows the same steps as that of~\cite[Proposition 1.4]{franks1985nonsingular}; we therefore omit it.

\begin{proposition}
If $f \colon X \to \mathbb{R}$ is a Lyapunov function with respect to a singular Morse-Smale flow $\varphi$ on a closed $n$-pseudomanifold with spherical-cone singularities $X$, then the Lyapunov graph $\Gamma_{f}$ associated to $f$ is a finite directed graph.
\end{proposition}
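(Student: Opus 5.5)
The plan is to follow the structure of \cite[Proposition 1.4]{franks1985nonsingular}: show that $X/{\sim_f}$ is a finite union of vertices and open arcs. First we note that $f$ has finitely many critical values. By Definition~\ref{definicao_fluxo}, $\mathcal{R}(\varphi)$ consists of finitely many singularities (cone points and hyperbolic singularities of the regular part) and periodic orbits. By Definition~\ref{definition_Lyapunov_graph}(iii), $f$ is constant on each of these components, so the critical values $c_1<\dots<c_m$ are finite in number. Vertices are components of level sets containing an element of $\mathcal{R}(\varphi)$, so there are at most $|\mathcal{R}(\varphi)|$ of them.

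Next we analyze the regular part. Let $I$ be a component of $\mathbb{R}\setminus\{c_1,\dots,c_m\}$ meeting $f(X)$. On $f^{-1}(I)$ there are no cone singularities, because every point of $\operatorname{Sing}(X)$ lies in $\operatorname{Sing}(\varphi)\subset\mathcal{R}(\varphi)$. So $f^{-1}(I)$ is an open subset of the smooth manifold $X\setminus\operatorname{Sing}(X)$. There $f$ is smooth, and it strictly decreases along the flow, so it is a submersion. Then $f\colon f^{-1}(I)\to I$ is a proper submersion; properness follows from compactness of $X$ after restricting to closed subintervals. By Ehresmann's theorem (or by flowing along the normalized vector field $-\nabla f/|\nabla f|^2$, or along the flow reparametrized by $f$), we get $f^{-1}(I)\cong f^{-1}(c)\times I$ for any $c\in I$. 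The level set $f^{-1}(c)$ is a compact $(n-1)$-manifold, so it has finitely many components $K_1,\dots,K_r$. Hence $f^{-1}(I)/{\sim_f}$ is a disjoint union of $r$ open intervals, which are the edges. Each edge is oriented by the decrease of $f$.

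Then we check how edges attach to vertices. Near a critical level $c_j$ we need each edge to have well-defined endpoints: as $t\to c_j^{\pm}$, the component $K_l\times\{t\}$ must converge into a single component of $f^{-1}(c_j)$. Since $K_l\times(c_j-\varepsilon,c_j)$ is connected, its closure meets $f^{-1}(c_j)$ in a connected set, so it lies in one component. We must also verify that every component of $f^{-1}(c_j)$ not containing a critical element is just a regular point of an edge. This is where the cone points need care. Near $p\in\operatorname{Sing}(X)$ we use the local model from Lemma~\ref{lemma_lyapunov_local}: there $f$ is a sum of quadratic forms on the sheets. Hence the level components through $p$ are wedges of local quadric levels, and the neighbouring regular levels are finite unions of local components, at most two per sheet. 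So only finitely many edges are incident to the vertex of $p$. For regular hyperbolic singularities and periodic orbits we use the standard Morse/Smale local models, as in the smooth case.

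The main obstacle is this local finiteness at critical levels, in particular showing that $f^{-1}(c_j)$ has finitely many components and that each edge has a unique endpoint. We handle it by covering $f^{-1}(c_j)$ by the finitely many isolating neighbourhoods $N$ of critical elements together with flow boxes. Compactness of $f^{-1}(c_j)$ then gives finiteness. Combining the finitely many vertices with the finitely many edges per interval $I$ shows that $\Gamma_f$ is a finite directed graph.
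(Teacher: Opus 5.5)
Your overall route is the one the paper has in mind; the paper omits the proof and defers to \cite[Proposition 1.4]{franks1985nonsingular}. You establish finitely many critical values, a product structure $f^{-1}(I)\cong f^{-1}(c)\times I$ on each regular interval coming from a proper submersion, and then an analysis at the critical levels.

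The gap is in that last step, which you yourself call the main obstacle. You resolve it by asserting that near $p\in\operatorname{Sing}(X)$ the function $f$ \emph{is} the sum of quadratic forms from Lemma~\ref{lemma_lyapunov_local}, and that $f$ has a Morse normal form near regular singularities. The proposition, however, is about an arbitrary Lyapunov function in the sense of Definition~\ref{definition_lyapunov_function}. Lemma~\ref{lemma_lyapunov_local} only constructs one particular local Lyapunov function and says nothing about the others. A general $f$ is only continuous at $p$, smooth off $\operatorname{Sing}(X)$, and strictly decreasing along regular orbits. For example, $(x_1^2+\cdots+x_n^2)^{1/4}$ on an attracting sheet and $\sum_{j>\lambda}x_j^4-\sum_{j\le\lambda}x_j^4$ on a saddle sheet of index $\lambda$ are both admissible and neither is quadratic. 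So the ``wedge of quadrics'' description, the bound of two local components per sheet, and with them your covering argument for the finiteness of $\pi_0(f^{-1}(c_j))$ are unsupported. The covering argument would also need each isolating neighbourhood to meet $f^{-1}(c_j)$ in finitely many components, which is not automatic for a general $f$.

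A smaller point concerns the endpoint of an edge. The principle you use there, that the closure of a connected set meets a closed set in a connected set, is false in general. What holds here is that $\overline{K_l\times(c_j-\varepsilon,c_j)}\cap f^{-1}(c_j)=\bigcap_{0<\delta\le\varepsilon}\overline{K_l\times(c_j-\delta,c_j)}$. This is a nested intersection of compact connected sets, hence connected.

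No normal form for $f$ is needed; your own ingredients suffice.
\begin{itemize}
\item Let $C$ be a component of $f^{-1}(c_j)$ that misses $\mathcal{R}(\varphi)$. It lies in the open set where $df\neq 0$, so it is open in $f^{-1}(c_j)$.
\item Flowing along $\nabla f/|\nabla f|^2$ gives $C$ a product neighbourhood $C\times(c_j-\delta,c_j+\delta)$.
\item Hence $C\times(c_j-\delta,c_j)$ coincides with one piece $K_l\times(c_j-\delta,c_j)$ of the adjacent regular interval, and the limit set of that piece at level $c_j$ is exactly $C$.
\item Distinct such $C$ therefore correspond to distinct $K_l$. So $f^{-1}(c_j)$ has at most $\#\mathcal{R}(\varphi)+r$ components, where $r$ is the number of components of the adjacent regular level.
\item For a vertex component $C$, let $W_\delta$ be the component of $f^{-1}((c_j-\delta,c_j+\delta))$ containing $C$. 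It is open and saturated, and $\bigcap_\delta\overline{W_\delta}=C$ by the same nested-intersection argument.
\item So for small $\delta$, $W_\delta$ meets $f^{-1}(c_j)$ only in $C$ and otherwise consists of finitely many product pieces.
\end{itemize}
This gives the finite star structure of $\Gamma_f$ at each vertex and completes the argument. If you prefer a genuinely local analysis, it has to be topological. It should use only the strict monotonicity of $f$ along orbits and the hyperbolic orbit structure on each sheet, not a normal form for $f$.
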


\begin{remark}
    More generally, for continuous functions $f\colon X_1\to X_2$, where $X_1$ and $X_2$ are arbitrary topological spaces, the same equivalence relation yields the quotient space $R_f$, called the Reeb space. When $X_2=\es^1$ or $X_2=\R$, the quotient space $R_f$ is referred to as the Reeb graph. In more general settings, there exists an upper bound for the topological dimension of a Reeb space and a Reeb graph for a wide class of topological spaces and maps; see~\cite{GELBUKH2025109462}.
\end{remark}

\begin{example}
\label{example_graph_sphere}
Let $(X, \varphi)$ be the $n$-pseudomanifold with spherical-cone singularities endowed with the singular Morse-Smale flow described in Example~\ref{example_spheres}, for $k=8$ and $\eta_0=\eta_n=4$, where $n$ is arbitrary. By Theorem~\ref{theorem_lyapunov_global}, there exists a Lyapunov function $f$ associated with $(X, \varphi)$. Thus, the associated Lyapunov graph $\Gamma_{f}$ is illustrated in Figure~\ref{figura_grafo_8_folhas}, where the vertex $v\in \Gamma_f$ corresponding to the singularity $p\in \operatorname{M}(\mathcal{C}_8)$ of nature $a^4r^4$  and satisfies  $e^-(v)=e^+(v)=4$. \hfill $\diamond$

\begin{figure}[!ht]
\centering
\begin{overpic}[width=3cm]{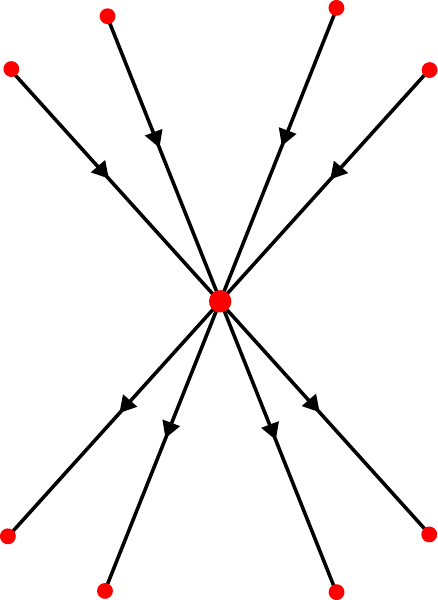}
\put(43, 46.5){$v$}
\put(-10,92){$\Gamma_{f}$}
\end{overpic}
\caption{Lyapunov graph for the flow in Example~\ref{example_spheres}, when  $k=8$ and $\eta_0=\eta_n=4$.}
\label{figura_grafo_8_folhas}
\end{figure}
\end{example}

Let $X$ be a closed $n$-pseudomanifold with spherical-cone singularities, equipped with a singular Morse-Smale flow $\varphi$, and let $f\colon X\to \mathbb{R}$ be a Lyapunov function. Denote by $\pi_{f} \colon X \to \Gamma_{f}$ the quotient map associated with the equivalence relation defining the Lyapunov graph of $f$. Furthermore, let $(\widetilde{X}, \widetilde{\varphi})$ be the morsification given by Lemma~\ref{lemma_morsification}, and let $\widetilde{f}\colon \widetilde{X}\to \mathbb{R}$ be a Lyapunov function for $\widetilde{\varphi}$. Denote by $\pi_{\widetilde{f}}\colon \widetilde{X} \to \Gamma_{\widetilde{f}}$ the corresponding quotient map  defining the Lyapunov graph of $\widetilde{f}$. With this notation in place, we state the following theorem:

\begin{theorem}
\label{theorem_graph_map}
Let $f \colon X \to \mathbb{R}$ be a Lyapunov function with respect to a singular Morse-Smale flow $\varphi$ on a closed $n$-pseudomanifold $X$ with spherical-cone singularities, and let $(\widetilde{X}, \widetilde{\varphi})$ be the morsification given by Lemma~\ref{lemma_morsification}. Then there exists a graph map $\Phi \colon \Gamma_{\widetilde{f}}\to \Gamma_{f}$ such that the following diagram commutes:
\begin{equation} 
\label{diagrama_grafos}
    \begin{tikzcd}
        \widetilde{X} \arrow[r, "\mathfrak{p}"] \arrow[d, "\pi_{\widetilde{f}}"'] 
        & X \arrow[d, "\pi_{f}"] \\
        \Gamma_{\widetilde{f}} \arrow[r, "\Phi"'] 
        & \Gamma_{f}
    \end{tikzcd}
\end{equation}
where $\mathfrak{p} \colon \widetilde{X} \to X$ is the projection given by Lemma~\ref{lemma_morsification}. Moreover, $\Phi$ satisfies the following properties:
\begin{enumerate}[label=(\alph*), font=\itshape, leftmargin=*, align=left]
    \item $\Phi_E$ is an orientation-preserving bijection between the edge sets of $\Gamma_{\widetilde{f}}$ and $\Gamma_{f}$;
    
    \item $\Phi_V$ is a surjective map between the vertex sets that induces the following relation on their cardinalities
    \[
    \#V_{\Gamma_{f}}=\#V_{\Gamma_{\widetilde{f}}} +\sum_{k \geq 2}(1-k)\#\operatorname{M}(\mathcal{C}_k),
    \]
    where $\#V_{\Gamma_{\widetilde{f}}}$ and $\#V_{\Gamma_{f}}$ denote the number of vertices of the graphs $\Gamma_{\widetilde{f}}$ and $\Gamma_{f}$, respectively, and $\#\operatorname{M}(\mathcal{C}_k)$ is the cardinality of the finite set $\operatorname{M}(\mathcal{C}_k)$;
    
    \item If $p\in \operatorname{M}(\mathcal{C}_k)$, then
    \[
    e^{-}(v_p) = \sum_{\widetilde{p} \in \mathfrak{p}^{-1}(p)} e^{-}(v_{\widetilde{p} }) \quad \text{and} \quad  e^{+}(v_p) = \sum_{\widetilde{p} \in \mathfrak{p}^{-1}(p)} e^{+}(v_{\widetilde{p} }). 
    \]
\end{enumerate}
\end{theorem}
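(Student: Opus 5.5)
The plan is to make the choice of $\widetilde{f}$ compatible with $f$, so that $\Phi$ is induced directly by $\mathfrak{p}$. We may take $\widetilde{f}\coloneqq f\circ\mathfrak{p}$, perturbed only inside the local models $\widetilde{N}=\bigsqcup_{i=1}^k\overline{\mathbb{B}}_i$ if necessary. On each sheet, $f\circ\mathfrak{p}$ restricted to $\overline{\mathbb{B}}_i$ is exactly one of the local functions $f^a,f^{s_\lambda},f^r$ of Lemma~\ref{lemma_lyapunov_local}. These functions are smooth on the ball and strictly decreasing along $\widetilde{\varphi}$ away from the center. The only defect is that the $k$ centers in $\mathfrak{p}^{-1}(p)$ share the same critical value, which violates condition (iii) of Definition~\ref{definition_lyapunov_function}.

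We can fix this by adding small distinct constants $\epsilon_i$ on the $i$-th ball, interpolated to zero in a collar near $\partial\overline{\mathbb{B}}_i$ (which lies in a regular region). The $\epsilon_i$ are chosen small enough that the derivative along the flow stays negative and no new critical values cross existing ones. If the theorem is meant for an arbitrary Lyapunov function $\widetilde{f}$, we instead argue up to isomorphism of Lyapunov graphs: the graph depends only on the order and connectivity of level-set components between consecutive critical values. So we reduce to this adapted $\widetilde{f}$, and I expect this reduction to be the main obstacle.

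With $\widetilde{f}$ adapted, every level-set component $C$ of $\widetilde{f}$ maps under $\mathfrak{p}$ into a connected subset of a level set of $f$, up to the small shifts $\epsilon_i$, which can be absorbed by sliding along the flow. We then define $\Phi([y])\coloneqq[\mathfrak{p}(y)]$. Well-definedness and commutativity of diagram~\eqref{diagrama_grafos} follow from the continuity of $\mathfrak{p}$ and the fact that $\mathfrak{p}$ maps connected sets to connected sets.

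For (a), off the vertices $\mathfrak{p}$ restricts to a homeomorphism $\widetilde{X}\setminus\mathfrak{p}^{-1}(\operatorname{Sing}X)\to X\setminus\operatorname{Sing}X$ that preserves flow lines. A regular level component of $f$ avoids $\operatorname{Sing}(X)$ and hence is the homeomorphic image of exactly one regular level component of $\widetilde{f}$. Consequently edges correspond bijectively, and orientation is preserved because both functions decrease along the same orbits.

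For (b), the vertices coming from regular singularities and periodic orbits correspond bijectively. Each $p\in\operatorname{M}(\mathcal{C}_k)$ has exactly $k$ preimages $\widetilde{p}^j$, each giving its own vertex $v_{\widetilde{p}^j}$, and all of these map to $v_p$. Summing the loss of $k-1$ vertices per cone point gives the stated count, and surjectivity is immediate.

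For (c), we work in the local model $N=\bigvee\overline{\mathbb{B}}_i$. The edges leaving $v_p$ are the components of $f^{-1}(f(p)-\delta)\cap N$, which equal $f^{-1}(f(p)-\delta)\cap N\setminus\{p\}$ since $p$ is not on this level. This set is the disjoint union over $i$ of the corresponding sets in $\overline{\mathbb{B}}_i$; indeed, removing $p$ disconnects the sheets. By (a) these components correspond exactly to the outgoing edges of the vertices $v_{\widetilde{p}^i}$. The count for incoming edges follows in the same way with $f(p)+\delta$.
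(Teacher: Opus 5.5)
There is a genuine gap. It sits where you say the shifts $\epsilon_i$ ``can be absorbed by sliding along the flow,'' and in your edge count for (c).

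The edges at a vertex are not the pieces of $f^{-1}(f(p)\pm\delta)\cap N$. They are the components of $f^{-1}(f(p)\pm\delta)$ adjacent to the whole critical level component $K_p$ of $f$ through $p$. Already for a smooth saddle in dimension $2$, the set $f^{-1}(f(p)-\delta)\cap N$ has two arcs while the out-degree can be $1$.

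At a cone point with two or more saddle sheets, an arc of $K_p\setminus\{p\}$ can leave $p$ through one sheet and come back through another. Then several points of $\mathfrak{p}^{-1}(p)$ lie on one connected component of $\mathfrak{p}^{-1}(K_p)$. Once you separate their values by the $\epsilon_i$, the regular level components of $\widetilde{f}$ lying between two of these values form \emph{new} edges joining the corresponding vertices $v_{\widetilde{p}}$. Under $\mathfrak{p}$ these slide into $K_p$, so they correspond to the vertex $v_p$, not to an edge of $\Gamma_f$. Your argument for (a) only shows that every edge of $\Gamma_f$ lifts. It never rules out these extra edges of $\Gamma_{\widetilde f}$, and they break (c) as well.

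Here is a concrete instance, showing the obstacle is not the reduction to an adapted $\widetilde{f}$ that you flagged.
\begin{itemize}
\item On the round sphere take $g=xy$. Its gradient flow is Morse--Smale, with two maxima, two minima, and saddles at the poles $N,S$. Both saddles lie on the connected level set $\{xy=0\}$, which is two great circles.
\item Identify $N$ with $S$. This gives a pinched sphere $X$ with one point $p\in\operatorname{M}(\mathcal{C}_2)$ of nature $s_1^2$. There is no orbit between $N$ and $S$, so no recurrence is created.
\item Take $f=g$, adjusted near the extrema so their values are distinct; this is a Lyapunov function. The complement of $K_p$ is four lunes, so $\Gamma_f$ is a star with $4$ edges and $e^{+}(v_p)=e^{-}(v_p)=2$.
\item The morsification is $\mathbb{S}^2$ with the same flow. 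For \emph{every} Lyapunov function $\widetilde{f}$, the graph $\Gamma_{\widetilde f}$ is a tree on six vertices, since $\mathbb{S}^2$ is simply connected, and the four extrema are leaves. So it has $5$ edges, exactly one of which joins $v_N$ and $v_S$. Hence $\sum_{\widetilde{p}\in\{N,S\}}e^{\pm}(v_{\widetilde{p}})=3\neq 2$.
\end{itemize}
Thus (a) and (c) fail here however $\widetilde{f}$ is chosen.

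The paper's own proof has the same hole. It obtains $\Phi_E$ solely from $\mathfrak{p}$ being a diffeomorphism off the critical set, which says nothing about level components. The statement needs an extra hypothesis, for example that $\mathfrak{p}^{-1}(K_p)$ has exactly $k$ components for each $p\in\operatorname{M}(\mathcal{C}_k)$. This holds automatically when $p$ has at most one saddle sheet.

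Under that hypothesis your strategy goes through, with two changes:
\begin{itemize}
\item Separate the critical values by composing $f\circ\mathfrak{p}$ with an increasing function on a thin band around each component of $\mathfrak{p}^{-1}(K_p)$, rather than adding bumps inside the balls. This keeps the level components intact, so $\Phi([y])=[\mathfrak{p}(y)]$ is well defined and the square commutes exactly.
\item Count the edges in (c) globally, as components adjacent to $K_p$, not inside $N$.
\end{itemize}
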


\begin{proof}
Let $p\in \operatorname{M}(\mathcal{C}_k)\subset \operatorname{Sing}(X)$ be an arbitrary spherical-cone singularity of nature described by the word
\[
a^{\eta_0}s_1^{\eta_1}\dots s_{n-1}^{\eta_{n-1}}r^{\eta_n}.
\]
By the construction in Lemma~\ref{lemma_morsification}, we obtain a projection $\mathfrak{p}\colon \widetilde{X} \to X$ such that
\[
\mathfrak{p}^{-1}(p) = \{ \widetilde{p}^1_0, \dots , \widetilde{p}^{\eta_0}_0, \dots , \widetilde{p}^1_{\lambda}, \dots, \widetilde{p}^{\eta_\lambda}_{\lambda}, \dots, \widetilde{p}^1_n, \dots , \widetilde{p}^{\eta_n}_n \}.
\]
Since each $\widetilde{p}\in \mathfrak{p}^{-1}(p)$ is a singularity of $\widetilde{\varphi}$ and $p$ is a singularity of $\varphi$, it follows from the definition of Lyapunov graphs that $\pi_{\widetilde{f}}(\widetilde{p}) = v_{\widetilde{p}}$ and $\pi_{f}(p) = v_p$, where $v_{\widetilde{p}}$ and $v_p$ are vertices of $\Gamma_{\widetilde{f}}$ and $\Gamma_f$, respectively.

We define the vertex map $\Phi_V\colon V_{\Gamma_{\widetilde{f}}}\to V_{\Gamma_f}$ as follows.  For each  $p \in \operatorname{Sing}(X)$ and each  $\widetilde{p}\in \mathfrak{p}^{-1}(p)$, we set $\Phi_V(\pi_{\widetilde{f}}(\widetilde{p}))=v_p$, where $v_p$ is the vertex of $\Gamma_f$ associated to $p$. Since  no singularities or periodic orbits in the regular part of $\widetilde{X}$ are collapsed by $\mathfrak{p}$, we define $\Phi_V$ to act  as the identity on all remaining vertices. This concludes the definition of $\Phi_V$.

To define the edge map $\Phi_E$, it suffices to note that the restriction
\[
\mathfrak{p}\colon \widetilde{X}\setminus \mathcal{R}(\widetilde{\varphi}) \to X \setminus \mathcal{R}(\varphi)
\]
is a diffeomorphism, where $\mathcal{R}(\widetilde{\varphi})$ and $\mathcal{R}(\varphi)$ denote the sets of critical elements of $\widetilde{\varphi}$ and $\varphi$, respectively. Thus, we define $\Phi_E$ as the bijection between the edge sets that preserves orientation. This proves part $(a)$ of the theorem.

To ensure the commutativity of the diagram in \eqref{diagrama_grafos}, we need only to check it for points $\widetilde{p}\in \mathfrak{p}^{-1}(p)$, where $p\in \operatorname{Sing}(X)$. Note that
\[
\pi_f(\mathfrak{p}(\widetilde{p})) = \pi_f(p) = v_p = \Phi_V(\pi_{\widetilde{f}}(\widetilde{p})) = \Phi(\pi_{\widetilde{f}}(\widetilde{p})).
\]
Therefore, $\pi_f \circ \mathfrak{p} = \Phi \circ \pi_{\widetilde{f}}$, and the diagram commutes.

To prove item $(b)$, note that for any $p\in \operatorname{M}(\mathcal{C}_k)$, the map $\Phi_V$ collapses $k$ vertices of $\Gamma_{\widetilde{f}}$ to a single vertex of $\Gamma_f$. Since this observation holds for every such singularity, the following equality holds
\[
\#V_{\Gamma_f} = \#V_{\Gamma_{\widetilde{f}}} + \sum_{k \geq 2}(1-k)\#\operatorname{M}(\mathcal{C}_k).
\]
From $(a)$ and $(b)$, it follows immediately the equalities in item $(c)$.

Therefore, we have proved the existence of the graph map $\Phi\colon \Gamma_{\widetilde{f}}\to \Gamma_f$ satisfying the desired properties of the theorem.
\end{proof}

For intricate singular Morse–Smale flows on pseudomanifolds, the Lyapunov graph $\Gamma_f$ can be computed via  a morsification together with the graph map $\Phi \colon \Gamma_{\widetilde{f}} \to \Gamma_f$. We follow this approach in the next example.

\begin{example}
Consider the pseudomanifold with spherical-cone singularities $(X, \varphi)$ and the singular Morse-Smale flow $\varphi$ described in Example~\ref{example_torus} for the case where $n=2$, $m_1=2$, and $m_2=1$. Figure~\ref{figura_grafo_pojecao} shows a morsification $(\widetilde{X}, \widetilde{\varphi})$ for $(X, \varphi)$ and the corresponding Lyapunov graphs  $\Gamma_{\tilde{f}}$ and $\Gamma_{f}$. 
\begin{figure}[!ht]
\centering
\begin{overpic}[width=10cm]{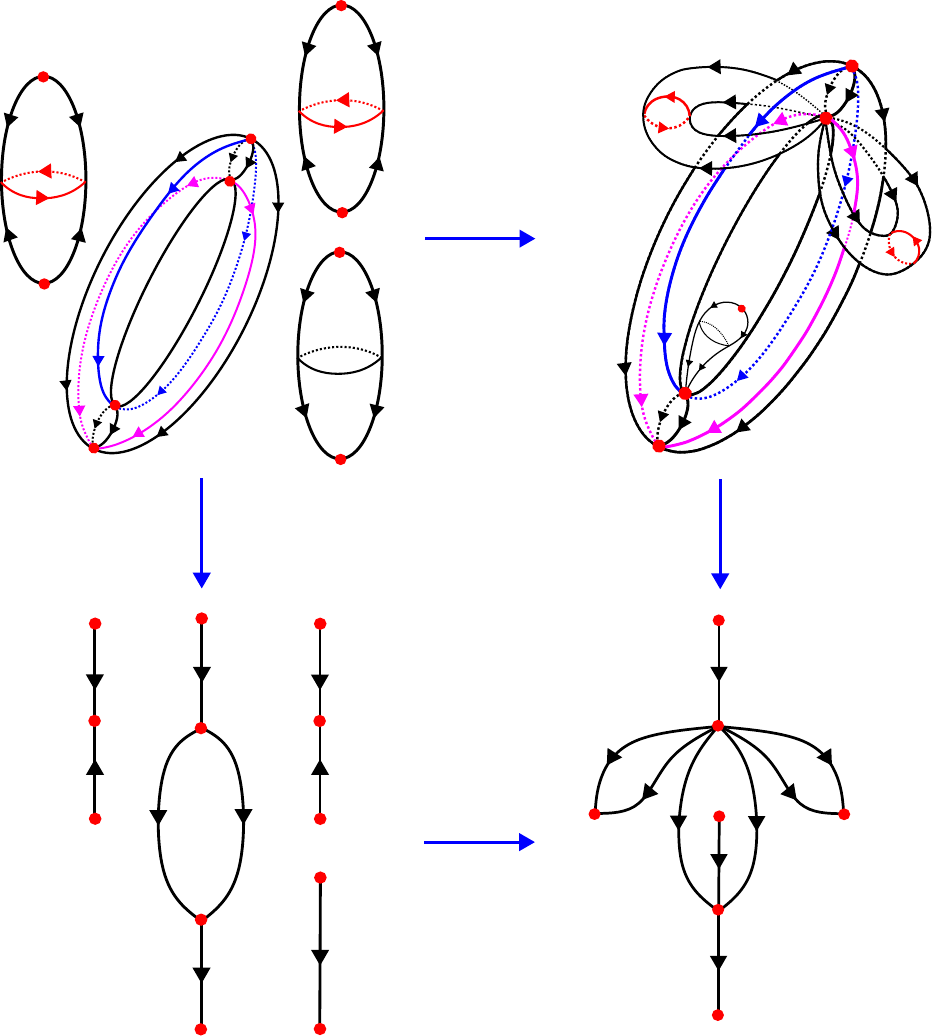}
\put(55,36){$\Gamma_{f}$}
\put(3, 36){$\Gamma_{\widetilde{f}}$}
\put(5,95){$\widetilde{X}$}
\put(55,95){$X$}
\put(45,81){\textcolor{blue}{$\mathfrak{p}$}}
\put(21,50){\textcolor{blue}{$\pi_{\widetilde{f}}$}}
\put(64,50){\textcolor{blue}{$\pi_{f}$}}
\put(45,20){\textcolor{blue}{$\Phi$}}
\end{overpic}
\caption{Obtaining the graph $\Gamma_{f}$ through the $\Phi$.}
\label{figura_grafo_pojecao}
\end{figure}
\hfill $\diamond$
\end{example}

Theorem~\ref{theorem_graph_map}, together with the local behavior of singular Morse-Smale flows, can be used to obtain estimates for the indegree and outdegree of vertices associated with spherical-cone singularities. More precisely, we have the following result.

\begin{theorem}
\label{theorem_degree}
Let $X$ be an $n$-pseudomanifold with spherical-cone singularities endowed with a singular Morse-Smale flow  $\varphi$, and let $\Gamma_{f}$ be a Lyapunov graph associated with $\varphi$. For each vertex $v$ of $\Gamma_f$ corresponding to a spherical-cone singularity $p\in \operatorname{M}(\mathcal{C}_k)$ of nature $a^{\eta_0} s_1^{\eta_{1}} \cdots s_{n-1}^{\eta_{n-1}} r^{\eta_n}$, the following local conditions hold:
\begin{equation}\label{eq_desigualdade_degree}
\begin{aligned}
    (k-\eta_0) - \eta_1 & \ \leq \ e^{-}(v)  \ \leq \ (k-\eta_0) + \eta_1, \\
    (k-\eta_n) - \eta_{n-1} & \ \leq \ e^{+}(v) \ \leq \ (k-\eta_n) + \eta_{n-1}.
\end{aligned}
\end{equation}
\end{theorem}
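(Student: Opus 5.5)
The plan is to reduce the statement to the smooth case through the morsification. Let $(\widetilde X,\widetilde\varphi)$ be the morsification of Lemma~\ref{lemma_morsification} and $\widetilde f$ a Lyapunov function for $\widetilde\varphi$. Then Theorem~\ref{theorem_graph_map}(c) gives
\[
e^{-}(v_p)=\sum_{\widetilde p\in\mathfrak p^{-1}(p)} e^{-}(v_{\widetilde p}),\qquad e^{+}(v_p)=\sum_{\widetilde p\in\mathfrak p^{-1}(p)} e^{+}(v_{\widetilde p}).
\]
Here $\mathfrak p^{-1}(p)$ consists of $\eta_\lambda$ nondegenerate singularities of index $\lambda$ for each $\lambda\in\{0,\dots,n\}$. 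It therefore suffices to bound the in- and outdegree of a vertex of a smooth Lyapunov graph carrying a single index-$\lambda$ singularity $\widetilde p$, and then sum over the sheets.

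For the smooth count I would use the local models from the proof of Lemma~\ref{lemma_lyapunov_local}. Take $c=\widetilde f(\widetilde p)$ and $\epsilon>0$ small. The outgoing edges of $v_{\widetilde p}$ are the components of $\widetilde f^{-1}(c-\epsilon)$ that meet the component of $\widetilde f^{-1}([c-\epsilon,c+\epsilon])$ containing $\widetilde p$. This intersection is, up to the regular part of the level set carried along unchanged, the exit set $L$ of the index pair of $\widetilde p$. In the normal form $\varphi^{s_\lambda}$ we have $L\cong \es^{\lambda-1}\times\overline{\mathbb B}^{n-\lambda}$, and for $\lambda=n$ it is $\es^{n-1}$. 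This gives:
\begin{itemize}
\item $L=\varnothing$ if $\lambda=0$, so $e^-=0$;
\item $L$ is connected if $2\le\lambda\le n$ (using $n\ge 2$), so $e^-=1$;
\item $L$ has two components if $\lambda=1$, so $e^-\in\{1,2\}$.
\end{itemize}
Reversing time (replace $\widetilde f$ by $-\widetilde f$ and $\lambda$ by $n-\lambda$) gives the dual statement: $e^+=0$ for $\lambda=n$, $e^+=1$ for $0\le\lambda\le n-2$, and $e^+\in\{1,2\}$ for $\lambda=n-1$.

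Summing over the $k$ preimages, the non-attracting sheets contribute $(k-\eta_0-\eta_1)$ from indices $\lambda\ge2$ and between $\eta_1$ and $2\eta_1$ from index $1$. This gives
\[
k-\eta_0\le e^-(v)\le (k-\eta_0)+\eta_1,
\]
which is stronger than the stated lower bound $(k-\eta_0)-\eta_1$. Symmetrically,
\[
k-\eta_n\le e^+(v)\le (k-\eta_n)+\eta_{n-1}.
\]

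The main obstacle is justifying that the additivity of Theorem~\ref{theorem_graph_map}(c) is compatible with the smooth local counts. Exit components from different sheets, or the two exit components of a single index-$1$ sheet, may lie in the same global component of the level set in $X$ (or in $\widetilde X$). In that case they produce a single edge rather than two. I would handle this by working directly in $X$ with the Lyapunov function $f$ of Theorem~\ref{theorem_lyapunov_global}, normalized near $p$ as in Lemma~\ref{lemma_lyapunov_local}. The edges leaving $v_p$ are the components of $f^{-1}(c-\epsilon)$ containing pieces of the exit sets $L_i$ of the sheets. There are exactly $(k-\eta_0)+\eta_1$ such pieces: each index-$1$ sheet contributes two and every other non-attracting sheet contributes one. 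Since merging can only decrease the count, this gives the upper bound.

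For the lower bound, I would argue that merging can only identify pieces coming from index-$1$ sheets. The deficit is then at most $\eta_1$, which yields the stated $(k-\eta_0)-\eta_1$. This is the step I would check most carefully. If it fails, I would fall back on the crude bound $e^-\ge 1$ when $\eta_0<k$. The incoming inequality follows by the same argument applied to $-f$.
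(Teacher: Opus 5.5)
\textbf{Your first paragraph is the paper's proof, and its conclusion is unjustified.} The paper's proof is exactly your first paragraph: morsify, apply the smooth vertex bounds of \cite[Corollary 3.3]{cruz1999} to each point of $\mathfrak p^{-1}(p)$, and add them using Theorem~\ref{theorem_graph_map}(c). The only difference is that the paper uses $0\le e^-\le 2$ at index-$1$ vertices. That is why it states $(k-\eta_0)-\eta_1$ where you obtain $k-\eta_0$.

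The obstacle you then raise is genuine, and the paper's proof does not address it; it simply takes (c) for granted. Additivity fails as soon as exit pieces of different sheets lie in one component of $f^{-1}(c-\epsilon)$. Your sharpened bound already fails for $n=2$. Take $\mathbb{S}^2$ with one minimum, two saddles at a common level whose four exit arcs lie on the single circle below, and three maxima. Identifying the saddles gives a point of nature $s_1^2$ with $e^-(v)=1<2=k-\eta_0$.

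\textbf{Your direct argument in $X$ gives the upper bounds but not the lower bound.} The upper-bound part is correct. $e^-(v)$ is the number of components of $f^{-1}(c-\epsilon)$ meeting $L=\bigsqcup_i L_i$, and $L$ has $(k-\eta_0)+\eta_1$ components. This route is better than the paper's, since it does not need (c).

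The lower-bound step, however, fails: merging is not confined to index-$1$ sheets. For $2\le\lambda\le n-1$, the piece $L_i\cong\mathbb{S}^{\lambda-1}\times\overline{\mathbb{B}}^{n-\lambda}$ has boundary. It therefore sits inside a larger level component, which may also contain exit pieces of other sheets. Only the index-$n$ pieces $L_i\cong\mathbb{S}^{n-1}$ are guaranteed to be whole components.

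Here is a concrete case. Give $\mathbb{S}^2\times\mathbb{S}^2$ the Morse function $f$ of the handle decomposition with one $0$-handle, two $2$-handles attached along the Hopf link at a common level, and one $4$-handle. Take the flow $-\nabla f$ and identify the two index-$2$ points. This yields $p$ of nature $s_2^2$ with $n=4$ and $k=2$. The levels just below and just above $p$ are single copies of $\mathbb{S}^3$, so $e^-(v)=e^+(v)=1$. But \eqref{eq_desigualdade_degree} forces $e^-(v)=e^+(v)=2$.

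So for $n\ge 3$ the stated lower bounds are false, and no repair of your last step can succeed. Besides the stated upper bounds, the local analysis gives $e^-(v)\ge \eta_n+\min\{1,\,k-\eta_0-\eta_n\}$, and dually $e^+(v)\ge \eta_0+\min\{1,\,k-\eta_0-\eta_n\}$.
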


\begin{proof}
Let $p \in \operatorname{M}(\mathcal{C}_k)$ be a spherical-cone singularity of nature describe by the word $a^{\eta_0} s_1^{\eta_1} \dots s_{n-1}^{\eta_{n-1}} r^{\eta_n}$. Consider the projection $\mathfrak{p} \colon \widetilde{X} \to X$ given in Lemma~\ref{lemma_morsification}, and the fiber over $p$
\[
S \coloneqq \mathfrak{p}^{-1}(p) = \{\widetilde{p}^1_0, \dots, \widetilde{p}^{\eta_0}_0, \dots, \widetilde{p}^1_{\lambda}, \dots, \widetilde{p}^{\eta_\lambda}_{\lambda}, \dots, \widetilde{p}^1_n, \dots, \widetilde{p}^{\eta_n}_n\},
\]
where each $\widetilde{p}^i_{\lambda}$ is a singularity of index $\lambda$ of $\widetilde{\varphi}$. By \cite[Corollary 3.3]{cruz1999}, the indegrees of the vertices of the Lyapunov graph associated with $(\widetilde{X}, \widetilde{\varphi})$ satisfy the following relations
\[
e^-(v_{\widetilde{p}^i_0}) = 0, \quad 0 \leq e^-(v_{\widetilde{p}^i_1}) \leq 2,  \quad e^-(v_{\widetilde{p}^i_{\lambda}}) = 1, \quad  \text{for all } i \text{ and }  \lambda \in \{2, \dots, n\}.
\]
By summing these relations over the corresponding sets of critical points in $S$, we obtain
\[
\sum_{i=1}^{\eta_0} e^-(v_{\widetilde{p}^i_0}) = 0, \quad 0 \leq \sum_{i=1}^{\eta_1} e^-(v_{\widetilde{p}^i_1}) \leq 2\eta_1, \quad \text{and} \quad \sum_{i=1}^{\eta_{\lambda}} e^-(v_{\widetilde{p}^i_{\lambda}}) = \eta_\lambda.
\]
Combining these expressions for all singularities in $S$, it yields
\[
\eta_2 + \dots + \eta_n \ \leq \ \sum_{\widetilde{p} \in S} e^-(v_{\widetilde{p}})  \ \leq \  2\eta_1 + \eta_2 + \dots + \eta_n.
\]
Since the sum of all indices $\eta_j$ equals $k$, we can rewrite these bounds in terms of $k$ as follows
\[
(k-\eta_0) - \eta_1 \ \leq \ \sum_{\widetilde{p} \in S} e^{-}(v_{\widetilde{p}}) \ \leq \ (k-\eta_0) + \eta_1.
\]
By item (c) of Theorem~\ref{theorem_graph_map}, we have $e^{-}(v) = \sum_{\widetilde{p} \in S} e^{-}(v_{\widetilde{p}})$, which leads to our first set of inequalities
\[
(k-\eta_0) - \eta_1 \ \leq \ e^{-}(v) \ \leq \ (k-\eta_0) + \eta_1.
\]

Similarly, applying \cite[Corollary 3.3]{cruz1999} to the outdegrees, we find
$$ e^+(v_{\widetilde{p}^i_n}) = 0, \quad 0 \leq e^+(v_{\widetilde{p}^i_{n-1}}) \leq 2, \quad  e^+(v_{\widetilde{p}^i_{\lambda}}) = 1, \quad \text{for all} \ \ \lambda \in \{0, \dots, n-2\}.$$
Summing these relations over $S$ provides
\[
\sum_{i=1}^{\eta_n} e^+(v_{\widetilde{p}^i_n}) = 0, \quad 0 \leq \sum_{i=1}^{\eta_{n-1}} e^+(v_{\widetilde{p}^i_{n-1}}) \leq 2\eta_{n-1}, \quad \text{and} \quad \sum_{i=1}^{\eta_{\lambda}} e^+(v_{\widetilde{p}^i_{\lambda}}) = \eta_\lambda.
\]
By summing all outdegrees in $S$, it leads to the inequality:
\[
\eta_0 + \dots + \eta_{n-2}  \ \leq \ \sum_{\widetilde{p} \in S} e^+(v_{\widetilde{p}}) \ \leq \ 2\eta_{n-1} + \eta_0 + \dots + \eta_{n-2}.
\]
Rewriting this expression in terms of $k$, we obtain
\[
(k-\eta_n) - \eta_{n-1} \ \leq \ \sum_{\widetilde{p} \in S} e^+(v_{\widetilde{p}}) \ \leq \ (k-\eta_n) + \eta_{n-1}.
\]
Finally, using item (c) of Theorem~\ref{theorem_graph_map} again, we conclude that
\[
(k-\eta_n) - \eta_{n-1}  \ \leq \ e^+(v) \ \leq \ (k-\eta_n) + \eta_{n-1}.
\]
This completes the proof.
\end{proof}

The next two corollaries follow directly from the inequalities in~\eqref{eq_desigualdade_degree}. 
Their proofs consist in evaluating these bounds under the specific hypotheses of each statement and using the equality $e(v) = e^{-}(v) + e^{+}(v)$.

\begin{corollary}
\label{corollary_degree_specific_nature}
Let $X$ be an $n$-pseudomanifold with spherical-cone singularities endowed with a singular Morse-Smale flow  $\varphi$, and let $\Gamma_{f}$ be a Lyapunov graph associated with $\varphi$. Then for any vertex $v$ of $\Gamma_f$ corresponding to a spherical-cone singularity $p \in \operatorname{M}(\mathcal{C}_k)$, the following local conditions hold:
\begin{enumerate}[label=(\alph*), font=\itshape, leftmargin=*, align=left]
    \item If $p$ is of nature $a^k$, then $e^-(v)=0$ and $e^+(v)=k$.
    \item If $p$ is of nature $s_1^k$, then $0\leq e^-(v) \leq 2k$ and $e^+(v)=k$.
    \item If $p$ is of nature $s_{\lambda}^k$ with $\lambda\in \{2,\dots, n-2\}$, then $e^-(v) = k$ and $e^+(v) = k$.
    \item If $p$ is of nature $s_{n-1}^k$, then $e^-(v) = k$ and $0\leq e^+(v) \leq 2k$.
    \item If $p$ is of nature $r^k$, then $e^-(v)=k$ and $e^+(v)=0$.
    \item If $p$ is of nature $a^{\eta_0} s_1^{\eta_{1}} \cdots s_{n-1}^{\eta_{n-1}} r^{\eta_n}$ with $\eta_1=\eta_{n-1}=0$, then $e^-(v)=k-\eta_0$ and $e^+(v)=k-\eta_n$.
\end{enumerate}
\end{corollary}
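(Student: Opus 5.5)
The plan is to specialize the inequalities~\eqref{eq_desigualdade_degree} of Theorem~\ref{theorem_degree} to each prescribed nature. We use only the constraint $\sum_{i=0}^n \eta_i = k$, which forces every nature number not appearing in the word to vanish. Since each case fixes the nature numbers completely (or, in case (f), fixes $\eta_1=\eta_{n-1}=0$), the two-sided bounds either collapse to equalities or reduce to the stated ranges. Nonnegativity of degrees supplies the lower bound $0$ wherever the theorem's lower bound becomes negative.

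\textbf{Case by case.}
\begin{itemize}
\item[(a)] Nature $a^k$: here $\eta_0=k$ and all other $\eta_i=0$. The bounds give $0\le e^-(v)\le 0$ and $k\le e^+(v)\le k$.
\item[(b)] Nature $s_1^k$: here $\eta_1=k$, so $\eta_0=0$, and $\eta_n=\eta_{n-1}=0$ because $n\ge 3$ is implicit for $s_1\neq s_{n-1}$. The first line reads $0\le e^-(v)\le 2k$ and the second $k\le e^+(v)\le k$.
\item[(c)] Nature $s_\lambda^k$ with $2\le\lambda\le n-2$: $\eta_0=\eta_1=\eta_{n-1}=\eta_n=0$, so both lines collapse to $e^\pm(v)=k$.
\item[(d)] Nature $s_{n-1}^k$: this is symmetric to (b), giving $e^-(v)=k$ and $0\le e^+(v)\le 2k$.
\item[(e)] Nature $r^k$: this is symmetric to (a), giving $e^-(v)=k$ and $e^+(v)=0$.
\item[(f)] Setting $\eta_1=\eta_{n-1}=0$ directly collapses the bounds to $e^-(v)=k-\eta_0$ and $e^+(v)=k-\eta_n$.
\end{itemize}

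\textbf{Main obstacle: the low-dimensional overlaps.} In case (b) with $n=2$, we have $s_1=s_{n-1}$, so $\eta_{n-1}=k$ and the $e^+$ bound widens to $0\le e^+(v)\le 2k$. The stated equality $e^+(v)=k$ then does not follow from~\eqref{eq_desigualdade_degree}. The same issue arises symmetrically in case (d). I would handle this by noting that statements (b) and (d) are intended for $n\ge 3$, with the combined range when $n=2$. If needed, I would fall back to the per-sheet computation in the proof of Theorem~\ref{theorem_degree}, namely \cite[Corollary 3.3]{cruz1999} applied to each $\widetilde p\in\mathfrak p^{-1}(p)$, and then apply item (c) of Theorem~\ref{theorem_graph_map}.

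Finally, the total degree $e(v)=e^-(v)+e^+(v)$ follows immediately in each case; for instance, it equals $2k-\eta_0-\eta_n$ in case (f).
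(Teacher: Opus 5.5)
Your proposal is correct and is essentially the paper's own argument, which consists solely of evaluating the bounds~\eqref{eq_desigualdade_degree} of Theorem~\ref{theorem_degree} under each hypothesis using $\sum_i\eta_i=k$. You are also right that this tacitly requires $n\ge 3$ in (b) and (d), which the paper does not state; do not rely on your fallback for $n=2$, however, since the per-sheet bounds from the proof of Theorem~\ref{theorem_degree} only give ranges there (an index-$1$ point on a surface can have indegree $2$), and the claims genuinely fail---wedging two standing tori with slightly tilted height functions at their lower saddles gives nature $s_1^2$ with $e^+(v)=2+2=4\neq k=2$---so restricting (b) and (d) to $n\ge 3$ is the right fix.
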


\begin{corollary}
\label{corollary_degree}
Let $X$ be an $n$-pseudomanifold with spherical-cone singularities, endowed with a singular Morse-Smale flow $\varphi$. A Lyapunov graph $\Gamma_{f}$ associated with $\varphi$ satisfies the following local conditions for a vertex $v$ associated with any spherical-cone singularity $p \in \operatorname{M}(\mathcal{C}_k)$: 
\begin{enumerate}[label=(\alph*), font=\itshape, leftmargin=*, align=left]
    \item If $p$ is of nature $a^{\eta_0} s_1^{\eta_{1}} \cdots s_{n-1}^{\eta_{n-1}} r^{\eta_n}$, then
    \[
    2k-(\eta_0+\eta_1+\eta_{n-1}+\eta_n)  \ \leq \ e(v) \ \leq \ 2k-(\eta_0-\eta_1-\eta_{n-1}+\eta_n). 
    \]
    \item If $p$ is of nature $a^{\eta_0} s_1^{\eta_{1}} \cdots s_{n-1}^{\eta_{n-1}} r^{\eta_n}$ with $\eta_1=\eta_{n-1}=0$, then 
    \(
    e(v)=2k-(\eta_0+\eta_n). 
    \)
\end{enumerate}
\end{corollary}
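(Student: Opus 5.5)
The plan is to add the two inequalities of \eqref{eq_desigualdade_degree} in Theorem~\ref{theorem_degree} and to use the identity $e(v)=e^-(v)+e^+(v)$. Fix a vertex $v$ of $\Gamma_f$ corresponding to $p\in\operatorname{M}(\mathcal{C}_k)$ of nature $a^{\eta_0}s_1^{\eta_1}\cdots s_{n-1}^{\eta_{n-1}}r^{\eta_n}$. Theorem~\ref{theorem_degree} gives the lower bounds $e^-(v)\geq (k-\eta_0)-\eta_1$ and $e^+(v)\geq (k-\eta_n)-\eta_{n-1}$. Summing them yields
\[
e(v)\ \geq\ 2k-(\eta_0+\eta_1+\eta_{n-1}+\eta_n).
\]
Summing the upper bounds $e^-(v)\leq (k-\eta_0)+\eta_1$ and $e^+(v)\leq (k-\eta_n)+\eta_{n-1}$ gives
\[
e(v)\ \leq\ 2k-\eta_0+\eta_1+\eta_{n-1}-\eta_n\ =\ 2k-(\eta_0-\eta_1-\eta_{n-1}+\eta_n).
\]
This is exactly item (a).

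For item (b), I substitute $\eta_1=\eta_{n-1}=0$ into the two bounds of (a). The lower and upper bounds then coincide and both equal $2k-(\eta_0+\eta_n)$, which forces the equality. Alternatively, one can cite Corollary~\ref{corollary_degree_specific_nature}(f), which gives $e^-(v)=k-\eta_0$ and $e^+(v)=k-\eta_n$, and add these.

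There is no substantial obstacle. The only point needing care arises when $n=2$: then $s_1=s_{n-1}$, so $\eta_1$ and $\eta_{n-1}$ denote the same number. The statement of Theorem~\ref{theorem_degree} is used verbatim, however, so the summed bounds still hold as written, with $\eta_1=\eta_{n-1}$ in that case, and no separate case analysis is needed.
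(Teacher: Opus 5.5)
Your proposal is correct and follows the paper's own argument. The paper notes that this corollary follows by adding the two bounds of \eqref{eq_desigualdade_degree} from Theorem~\ref{theorem_degree} and using $e(v)=e^-(v)+e^+(v)$, with item (b) obtained by setting $\eta_1=\eta_{n-1}=0$. Your algebra for both bounds is right.
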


\section{Euler-Poincaré Characteristic for Pseudomanifolds with Spherical-Cone Singularities}
\label{section_characteristic}

The study of the Euler-Poincaré characteristic dates back to Euler in the 18th century, who established the celebrated formula
\[
\chi(K)=V-E+F,
\]
where $K\subset \mathbb{R}^3$ is a polyhedron, $V$ denotes the number of vertices, $E$ the number of edges, and $F$ the number of faces. In the 19th century, Poincaré generalized this notion to higher-dimensional spaces, showing that
\[
\chi(T) = \sum_{i=0}^n (-1)^i \beta_i(T),
\]
where $T$ is an $n$-dimensional triangulable topological space and $\beta_i(T)$ denotes its$i$-th Betti number. 

The Poincaré-Hopf Theorem represents the first instance where the Euler-Poincaré characteristic is described in terms of the indices of singularities of a vector field. It was first proved by Henri Poincaré in dimension two \cite{poincare1885memoire}, and later extended to higher dimensions by Heinz Hopf \cite{hopf1927vektorfelder}. The theorem can be stated as follows.

Let $M$ be a compact manifold with boundary $\partial M$, and let $V$ be a continuous vector field tangent to $M$ with isolated singularities. Denote by $p_i \in \operatorname{Sing}(V)$ the singularities of $V$ and by $I(V, p_i)$ their respective Poincaré-Hopf indices. If $V$  points outward along $\partial M$, then 
\[
    \chi(M) = \sum_{p_i \in \operatorname{Sing}(V)} I(V, p_i).
\]
If $V$ points inward along $\partial M$, then
\[
    \chi(M) - \chi(\partial M) = \sum_{p_i \in \operatorname{Sing}(V)} I(V, p_i).
\]
A fundamental consequence of the Poincaré-Hopf Theorem is that the sum of the indices of a vector field tangent to a compact manifold, with isolated singularities, does not depend on the chosen vector field.

More conceptually, the theorem shows that the Euler–Poincaré characteristic measures the obstruction to the existence of a nowhere-vanishing tangent vector field on a compact oriented manifold. In this sense, it can be interpreted as one of the earliest characteristic classes. For further details, see \cite[Chapter~2]{brasselet2021introduction}.

Conley theory once again proves to be a powerful tool for computing the Euler–Poincaré characteristic of smooth manifolds in terms of the ranks of the homology Conley index of the critical elements of a Morse–Smale flow on the space.

In this section,  Conley theory is used to derive alternative formulas for the Euler-Poincaré characteristic of an $n$-pseudomanifold with spherical-cone singularities in terms of dynamical data arising from a singular Morse-Smale flow on $X$. 

\subsection{Alternative Formula for the Euler-Poincaré Characteristic}
\label{subsection_alternative_characteristic}

In this subsection, we employ the tools developed in Section~\ref{section_conley} to establish the alternative formula presented in Theorem~\ref{theorem_characteristic_conley}. This extends the results of~\cite{Lima2025, Montufar} by showing that, whereas those previous works are restricted to dimension~$2$, Theorem~\ref{theorem_characteristic_conley} holds in arbitrary dimensions.

\begin{theorem}
\label{theorem_characteristic_conley}
Let $X$ be a closed $n$-pseudomanifold with spherical-cone singularities endowed with a singular Morse–Smale flow $\varphi$. Then
\[
    \chi(X)=\sum_{p \in \operatorname{Sing}(\varphi)} \sum_{j=0}^{n}(-1)^j h_j(p), 
\]
where $h_{\ast}(p)=\left( h_0(p), \dots, h_n(p) \right)$ denotes the numerical Conley indices of the singularity $p$.
\end{theorem}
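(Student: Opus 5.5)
The plan is to prove the formula through the filtration of Lemma~\ref{Lema_sequencia}, using the additivity of the Euler-Poincaré characteristic along that filtration. Write $\mathcal{R}(\varphi)=\{\sigma_1,\dots,\sigma_{m_0}\}$. Since $X$ is closed, $\partial X^-=\partial X^+=\varnothing$. Lemma~\ref{Lema_sequencia} then gives compact sets
\[
\varnothing=G_0\subset G_1\subset\cdots\subset G_{m_0}=X,
\]
where $G_i$ contains exactly the critical elements $\sigma_1,\dots,\sigma_i$. The flow is transverse to each $B_i$, and $B_i=\partial G_i$ is the entrance boundary of $G_i$.

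\textbf{Step 1: each $(G_i,G_{i-1})$ is an index pair for $\sigma_i$.} This follows the classical filtration argument of Conley. The set $\overline{G_i\setminus G_{i-1}}$ is an isolating neighborhood of $\sigma_i$, because its maximal invariant set contains no other chain recurrent points. The set $G_{i-1}$ is positively invariant in $G_i$: orbits enter $G_{i-1}$ through $B_{i-1}$ and never leave it. Finally, $G_{i-1}$ is the exit set, since any orbit leaving $\overline{G_i\setminus G_{i-1}}$ must cross $B_{i-1}$. It follows that $h(\sigma_i)=[G_i/G_{i-1}]$.

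\textbf{Step 2: add up Euler characteristics.} The set $B_{i-1}$ is a closed transverse hypersurface in the regular part and therefore has a collar. Hence $(G_i,G_{i-1})$ is a good pair, and
\[
H_*(G_i,G_{i-1})\cong \widetilde H_*(G_i/G_{i-1})=CH_*(\sigma_i).
\]
All these spaces are compact and admit finite CW structures, so their Euler characteristics are defined. The long exact sequence of each pair gives
\[
\chi(G_i)=\chi(G_{i-1})+\sum_j(-1)^jh_j(\sigma_i).
\]
Telescoping from $\chi(G_0)=0$ yields
\[
\chi(X)=\sum_{i=1}^{m_0}\sum_j(-1)^jh_j(\sigma_i).
\]

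\textbf{Step 3: discard periodic orbits.} By Example~\ref{example_conley_smooth}, an orientable periodic orbit $\gamma$ has $h(\gamma)=\es^\lambda\vee\es^{\lambda+1}$, whose alternating sum is $0$. In the nonorientable case the Conley index is the Thom space of the nonorientable bundle over $\es^1$. Its reduced homology has vanishing alternating rank sum as well; one can see this by computing with $\mathbb{Q}$ coefficients, or by noting that $\chi$ is multiplicative under the double cover. Only the singularities then contribute, which gives exactly the sum over $\operatorname{Sing}(\varphi)$ in the statement. For cone singularities the contributions are read off from Corollary~\ref{corollary_indice}. In both cases (a) and (b) they equal
\[
\sum_\lambda(-1)^\lambda\eta_\lambda(p)-(k-1).
\]

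As a consistency check, this local value matches the morsification picture of Lemma~\ref{lemma_morsification}. Passing from $\widetilde X$ to $X$ identifies $k$ points into one, which should lower $\chi$ by $k-1$. I may record this cross-check, but the filtration argument is the proof.

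\textbf{Main obstacle.} I expect the hardest part to be Step 1 near a cone point. I must check that $G_{i-1}$ really is positively invariant and an exit set when $\sigma_i$ is a cone singularity, and that $B_{i-1}$ avoids $\operatorname{Sing}(X)$ so that the collar and good-pair argument applies. I would handle both points using the local Lyapunov function of Lemma~\ref{lemma_lyapunov_local}. Its regular level sets $f^{-1}(c\pm\delta)\cap N$ are transverse to $\varphi$ and lie away from $p$. They can therefore be taken as pieces of $B_i$, exactly as in the construction in Lemma~\ref{Lema_sequencia}.
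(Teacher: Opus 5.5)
Your proposal is correct and follows essentially the same route as the paper. Both take the filtration of Lemma~\ref{Lema_sequencia}, use the index pairs $(G_i,G_{i-1})$, and get additivity of $\chi$ from the long exact sequence; the paper does this through explicit rank--nullity bookkeeping. Both then telescope and discard periodic orbits. Your version is slightly more careful: you justify $H_*(G_i,G_{i-1})\cong\widetilde H_*(G_i/G_{i-1})$ via the collar on $B_{i-1}$, and you treat nonorientable periodic orbits, which the paper's appeal to Example~\ref{example_conley_smooth} (orientable case only) leaves unaddressed.
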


\begin{proof}
Let $\mathcal{R}(\varphi)=\{ \sigma_1, \ldots, \sigma_{m_0}\}$, where each $\sigma_i$ is a critical element of $\varphi$, i.e. a singularity or a periodic orbit. Consider the sequence of sets $(G_i)_{i=0}^{m_0}$ given by Lemma~\ref{Lema_sequencia}, satisfying
\(
    G_0 \subset G_1\subset \ldots \subset G_{m_0}=X.
\)
For each $1\leq i\leq m_0$, $(G_i, G_{i-1})$ is an index pair for $\sigma_i$. The long exact sequence of the pair $(G_i,G_{i-1})$ is given by
\begin{equation*}
\begin{gathered}
\begin{tikzcd}[column sep=1.5em, row sep=1.5em, font=\normalsize]
\cdots \arrow{r}{\pi_{j}}  & H_{j}(G_i, G_{i-1}) \arrow{r}{\partial_j} & H_{j-1}(G_{i-1}) \arrow{r}{ f_{j-1}} &  H_{j-1}(G_{i})  \arrow{r}{\pi_{j-1}} & H_{j-1}(G_i,G_{i-1}) \arrow{r}{\partial_{j-1}} & \cdots
\end{tikzcd}
\end{gathered}
\end{equation*}
where, $f_{j-1}$ is the homomorphism induced by the inclusion map $f\colon  G_{i-1} \to G_i$, $\pi_{j}$ is induced by the projection $\pi\colon  (G_i, \varnothing) \to (G_i, G_{i-1})$, and $\partial_j$ is the connecting homomorphism.

From the exactness of this sequence and the rank–nullity formula\footnote{In this case, all homology groups are finitely generated abelian groups; therefore, the rank version of the rank–nullity theorem applies.}, we have
\begin{align}
\label{equation_proof_euler_1}
    \operatorname{rank}( \Ima (\pi_j))&=\operatorname{rank} (\Ker (\partial_j))\nonumber\\
    &= \operatorname{rank}( H_j(G_i,G_{i-1})) - \operatorname{rank}( \Ima (\partial_j) )\nonumber\\
    &= \operatorname{rank}( H_j(G_i,G_{i-1})) - \operatorname{rank}( \Ker ( f_{j-1}) )
\end{align}
and
\begin{align}
\label{equation_proof_euler_2}
    \operatorname{rank}( \Ima (\pi_{j-1}))&= -\operatorname{rank} (\Ker (\pi_{j-1})) + \operatorname{rank} (H_{j-1}(G_i))\nonumber\\
    &= -\operatorname{rank} (\Ima (f_{j-1})) + \operatorname{rank} (H_{j-1}(G_i)).
\end{align}
Summing \eqref{equation_proof_euler_1} and \eqref{equation_proof_euler_2}, we obtain 
\begin{align*}
    \operatorname{rank}( \Ima (\pi_j)) + \operatorname{rank}( \Ima (\pi_{j-1})) =& \operatorname{rank}( H_j(G_i,G_{i-1})) - \operatorname{rank}( \Ker ( f_{j-1}) )  -\operatorname{rank} (\Ima (f_{j-1})) + \operatorname{rank} (H_{j-1}(G_i))\\
    =& \operatorname{rank}( H_j(G_i,G_{i-1})) + \operatorname{rank} (H_{j-1}(G_i)) -\left[ \operatorname{rank}( \Ker ( f_{j-1}) ) + \operatorname{rank} (\Ima (f_{j-1})) \right] \\
    =& \operatorname{rank}(H_j(G_i,G_{i-1})) + \operatorname{rank} (H_{j-1}(G_i))-  \operatorname{rank} (H_{j-1}(G_{j-1})).
\end{align*}

Since $CH_{\ast}(\sigma_i)\cong H_{\ast} (G_i, G_{i-1})$,  it follows that
\( 
    \operatorname{rank} (H_j(G_i, G_{i-1}))= h_j( \sigma_i).
\)
Thus,
\[
    \operatorname{rank}( \Ima (\pi_j)) + \operatorname{rank}( \Ima (\pi_{j-1})) = h_j(\sigma_i) + \beta_{j-1}(G_i) - \beta_{j-1}(G_{i-1}).
\]
Fixing $i$ and taking the alternating sum over $j$, we obtain
\[
\sum_{j}(-1)^j\Big( \operatorname{rank}( \Ima (\pi_j)) + \operatorname{rank}( \Ima (\pi_{j-1}))\Big) = \sum_{j}(-1)^j h_j(\sigma_i)+ \sum_{j}(-1)^j\Big( \beta_{j-1}(G_i) - \beta_{j-1}(G_{i-1})\Big).   
\]
The left-hand side of the equation is a telescoping sum that vanishes. Consequently,
\[
    \sum_{j=0}^n(-1)^j h_j(\sigma_i) + \sum_{j=1}^{n+1}(-1)^j\Big( \beta_{j-1}(G_i) - \beta_{j-1}(G_{i-1})\Big) =0.
\]
Now, summing the last equation over $i$, we obtain
\begin{align*}
    0 &= \sum_{i=1}^{m_0}\sum_{j=0}^n(-1)^j h_j(\sigma_i) + \sum_{i=1}^{m_0}\sum_{j=1}^{n+1}(-1)^j\left( \beta_{j-1}(G_i) - \beta_{j-1}(G_{i-1})\right)\\
    &= \sum_{i=1}^{m_0}\sum_{j=0}^n(-1)^j h_j(\sigma_i) + \sum_{j=1}^{n+1}(-1)^j\left( \beta_{j-1}(G_{m_0}) \right).
\end{align*}
Since $G_{m_0}=X$, it follows the equality
$$ \sum_{j=0}^{n}(-1)^j\left( \beta_{j}(X) \right) \ = \  \sum_{i=1}^{m_0}\sum_{j=0}^n(-1)^j h_j(\sigma_i).$$

Therefore, 
\begin{align*}
    \chi(X) \ =& \ \sum_{i=1}^{m_0}\sum_{j=0}^n(-1)^j h_j(\sigma_i)\ = \sum_{\sigma \in \mathcal{R}(\varphi)}\sum_{j=0}^n(-1)^j h_j(\sigma).
\end{align*}
Note that if $\sigma=\gamma$ is a periodic orbit of index $\lambda$, for $\lambda=0,\dots, n$,  Example~\ref{example_conley_smooth} implies that 
\[
    \sum_{i=0}^n h_i(\gamma)= (-1)^{\lambda} + (-1)^{\lambda+1}=0,
\] 
i.e., its contribution to $\chi(X)$ vanishes.
Therefore, we can rewrite the result only for the singularities of $\varphi$, hence
\[
    \chi (X)\ = \sum_{p\in \operatorname{Sing}(\varphi)}\sum_{j=0}^n(-1)^j h_j(p).
\]
\end{proof}

\begin{remark}
\label{remark_caractersitca_com_bordo}
Theorem~\ref{theorem_characteristic_conley} also holds when $X$ is compact; in this case, letting $\partial X^- \subset \partial M$ denote the exit set of the flow $\varphi$, the following equality holds
\[
    \chi(X, \partial X^-) = \sum_{p\in \operatorname{Sing}(\varphi)}\sum_{j=0}^n(-1)^j h_j(p),
\]
where $\chi(X, \partial X^-)$ denotes the relative Euler characteristic of the pair $(X, \partial X^-)$. This is analogous to the case of smooth manifolds; see, for example, \cite[Proposition 2.1]{cruz1999}.
\end{remark}

\subsection{Euler–Poincaré Characteristic via Morsification}
\label{subsection_morsification_characteristic}
The morsification process alters the topology of $X$ in a controlled manner. The following theorem quantifies this change by relating the Euler–Poincaré characteristic of a pseudomanifold  $X$ to that of its morsification $\widetilde{X}$. In particular, the difference between these invariants is determined entirely by the spherical-cone singularities of $X$.

\begin{theorem}
\label{theorem_characteristic_morsification}
Let $X$ be a closed $n$-pseudomanifold with spherical-cone singularities endowed with a singular Morse-Smale flow $\varphi$. Then 
\[
    \chi(\widetilde{X}) = \chi(X) + \sum_{k \geq 2} (k-1)  \#\operatorname{M}(\mathcal{C}_k),
\]
where $\widetilde{X}$ denotes the morsification of $X$ and $\#\operatorname{M}(\mathcal{C}_k)$ is the cardinality of the finite set $\operatorname{M}(\mathcal{C}_k)$.
\end{theorem}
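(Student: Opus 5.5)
Two routes present themselves, a purely topological one and a dynamical one via Theorem~\ref{theorem_characteristic_conley}. I would give the dynamical argument as the main line, since it uses only results already established, and the topological computation as a cross-check.

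\textbf{Dynamical argument.} Apply Theorem~\ref{theorem_characteristic_conley} to $(X,\varphi)$ and the classical smooth version (equivalently, Theorem~\ref{theorem_characteristic_conley} with no cone points) to the morsification $(\widetilde X,\widetilde\varphi)$ from Lemma~\ref{lemma_morsification}. By construction, $\mathfrak{p}$ restricts to an orbit-preserving homeomorphism away from the cone points. Hence the critical elements of $\widetilde\varphi$ are of two kinds: those corresponding bijectively to the regular critical elements of $\varphi$, with the same local dynamics and hence the same Conley indices, and the fibres $\mathfrak{p}^{-1}(p)$ for $p\in\operatorname{Sing}(X)$. (If a perturbation was used to restore transversality, it can be taken small enough not to change the hyperbolic critical elements or their indices.) The regular contributions therefore cancel in $\chi(\widetilde X)-\chi(X)$.

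It remains to compare, for each $p\in\operatorname{M}(\mathcal{C}_k)$ of nature $a^{\eta_0}s_1^{\eta_1}\cdots r^{\eta_n}$, the alternating sum $\sum_j(-1)^jh_j(p)$ with the contribution $\sum_\lambda(-1)^\lambda\eta_\lambda$ of the $k$ nondegenerate points in $\mathfrak{p}^{-1}(p)$, using Example~\ref{example_conley_smooth}(a).

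\emph{Case $\eta_0<k$.} Corollary~\ref{corollary_indice}(b) gives
\[
\sum_j(-1)^jh_j(p)=\sum_{\lambda\ge1}(-1)^\lambda\eta_\lambda-(k-\eta_0-1).
\]
The fibre contributes $\eta_0+\sum_{\lambda\ge1}(-1)^\lambda\eta_\lambda$. The difference (fibre minus $p$) is therefore
\[
\eta_0+(k-\eta_0-1)=k-1.
\]

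\emph{Case of nature $a^k$.} Here $h_*(p)=(1,0,\dots,0)$ by Corollary~\ref{corollary_indice}(a), so $p$ contributes $1$. The fibre consists of $k$ attractors and contributes $k$, so again the difference is $k-1$.

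Summing over all cone singularities gives the stated formula.

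\textbf{Topological cross-check.} $X$ is obtained from $\widetilde X$ by identifying, for each $p\in\operatorname{M}(\mathcal{C}_k)$, the $k$ points of $\mathfrak{p}^{-1}(p)$ to a single point. Identifying $k$ points of a finite CW complex (choose a cell structure on $\widetilde X$ with these points as $0$-cells) removes $k-1$ zero-cells, so $\chi$ drops by $k-1$ per cone point. This confirms the formula independently of the flow.

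\textbf{Main obstacle.} The delicate step is justifying that the morsification preserves the Conley indices and critical elements of the regular part, given the possible perturbation at the gluing stage. I would argue via structural stability: the perturbation can be supported away from the critical elements and kept $C^1$-small. If this becomes awkward, the topological CW argument alone suffices, since $\chi$ depends only on $\widetilde X$.
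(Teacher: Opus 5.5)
Your main argument matches the paper's: apply Theorem~\ref{theorem_characteristic_conley} to both $X$ and $\widetilde X$, cancel the regular critical elements, and compare each cone point with its fibre via Corollary~\ref{corollary_indice}, splitting into the $a^k$ case and the $\eta_0<k$ case just as the paper does with its sets $A_k$ and $B_k$. Your CW cross-check is also correct (identifying $k$ vertices of a finite cell structure on $\widetilde X$ lowers $\chi$ by $k-1$), and it is worth keeping because it is flow-independent and avoids the perturbation issue you flag.
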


\begin{proof}
By Theorem~\ref{theorem_characteristic_conley},  the Euler-Poincaré characteristic of $X$ can be expressed in terms of the numerical Conley indices of the singularities of $\varphi$. 
We first analyze how the morsification affects the numerical Conley index of each spherical-cone singularity. To this end, let $(\widetilde{X}, \widetilde{\varphi}, \mathfrak{h}, \mathfrak{p})$ be a morsification of $(X,\varphi)$, as presented in Lemma~\ref{lemma_morsification}.
\begin{enumerate}[font=\itshape, leftmargin=*, align=left]
    \item Let $p \in \operatorname{M}(\mathcal{C}_k)$ be a singularity with nature $a^k$, where $k \geq 2$ is an integer. According to  Lemma~\ref{lemma_morsification},
    \(
        \mathfrak{h}(p)=\{ \widetilde{p}^1_0, \dots, \widetilde{p}^k_0 \}.
    \)
    By Corollary~\ref{corollary_indice}, we have $h_0(p)=h_0(\widetilde{p}^i_0)=1$ for all $i \in \{1, \ldots, k\}$. Thus,
    \[
    h_0(p)=1=k-(k-1) =\sum_{i=1}^kh_0(\widetilde{p}^i_0) -(k-1)=\sum_{q\in \mathfrak{h}(p)}h_0(q) - (k-1).
    \]
    Therefore,
    \begin{align}
    \label{eq_cone_atrator_zero}
        h_0(p)=\sum_{q\in \mathfrak{h}(p)}h_0(q) -(k-1).
    \end{align}
    Furthermore, for $1 \leq \lambda \leq n$, by Corollary~\ref{corollary_indice}, we have $h_\lambda(p)=\sum_{i=1}^k h_\lambda(\widetilde{p}^i_0)=0$. Hence,
    \begin{align}
    \label{eq_cone_atrator_lamda}
        h_\lambda(p)=\sum_{q \in \mathfrak{h}(p)}h_\lambda(q).
    \end{align}
    
    \item Let $p \in \operatorname{M}(\mathcal{C}_k)$ be a singularity with nature $a^{\eta_0(p)} s_1^{\eta_1(p)} \ldots s_{n-1}^{\eta_{n-1}(p)} r^{\eta_n(p) }$ such that $0 \leq \eta_0(p) < k$. By the construction in Lemma~\ref{lemma_morsification}, 
    \[
        \mathfrak{h}(p)=\{ \widetilde{p}^1_0, \dots, \widetilde{p}^{\eta_0(p)}_0, \dots, \widetilde{p}^1_\lambda, \dots, \widetilde{p}^{\eta_\lambda(p)}_{\lambda}, \dots, \widetilde{p}^1_n, \dots, \widetilde{p}^{\eta_n(p)}_n \}.
    \]
    If $2 \leq \lambda \leq n$, then  $h_\lambda(p)=\sum_{i=1}^{\eta_\lambda(p) }h_\lambda(\widetilde{p}^i_\lambda)$, by Corollary~\ref{corollary_indice}. Hence, 
    \begin{align}
    \label{eq_cone_lambda}
    h_\lambda(p)=\sum_{q \in \mathfrak{h}(p)}h_\lambda(q).
    \end{align}
    If $\lambda=1$, by Corollary~\ref{corollary_indice}, we have
    \( \displaystyle
        h_1(p)=\eta_1(p) + (k- \eta_0(p) -1)= \sum_{i=1}^{\eta_1(p)}h_1(\widetilde{p}^i_1) + (k-\eta_0(p)-1).
    \)
    Then,
    \begin{align}
    \label{eq_cone_um}
        h_1(p)=\sum_{q\in \mathfrak{h}(p)}h_1(q) + (k-\eta_0(p)-1).
    \end{align}
    If $\lambda=0$,  by Corollary~\ref{corollary_indice}, we have
    \( \displaystyle
        h_0(p)= 0=\eta_0(p)-\eta_0(p) = \sum_{i=1}^{\eta_0(p)}h_0(\widetilde{p}^i_0) - \eta_0(p).
    \)
    Hence,
    \begin{align}
    \label{eq_cone_zero}
    h_0(p)=\sum_{q\in \mathfrak{h}(p)}h_0(q) - \eta_0(p).
    \end{align}
\end{enumerate}

Now, observe that
\[
\operatorname{Sing}(\varphi)= \operatorname{Sing}(X)\cup \left( \bigcup_{k = 0 }^{n}\operatorname{Crit}_k (\varphi) \right)= \left(  \bigcup_{k \geq 2 }\operatorname{M}(\mathcal{C}_k) \right) \cup \left( \bigcup_{k =0}^n \operatorname{Crit}_k (\varphi) \right).
\]
We define the  sets
\( 
    A_k \coloneqq \{ p \in \operatorname{M}(\mathcal{C}_k) \mid p \text{ is of nature } a^k \}\) and \( B_k \coloneqq \operatorname{M}(\mathcal{C}_k) \setminus A_k,
\)
along with their unions
\[
    A \coloneqq \bigcup_{k \geq 2} A_k, \qquad B \coloneqq \bigcup_{k \geq 2} B_k, \qquad \text{and} \qquad C \coloneqq \bigcup_{k \geq 0}\operatorname{Crit}_k (\varphi).
\]
Hence, $\operatorname{Sing}(\varphi)=A \cup B \cup C$. From the definitions of the above sets and Theorem~\ref{theorem_characteristic_conley}, we obtain
\begin{align}
\chi(X)&=\sum_{p \in \operatorname{Sing}(\varphi)}\sum_{j=0}^n (-1)^j h_j(p) \nonumber \\
\label{eq_new_01}
&=\sum_{p \in A}\sum_{j=0}^n (-1)^j h_j(p) + \sum_{p \in B}\sum_{j=0}^n (-1)^j h_j(p)  + \sum_{p \in C}\sum_{j=0}^n (-1)^j h_j(p).
\end{align}

We now expand each term in Equation~\eqref{eq_new_01}. First, for the set $A$, using the definitions of $A_k$ and $A$ together with the relations obtained in~\eqref{eq_cone_atrator_zero} and~\eqref{eq_cone_atrator_lamda}, we obtain the following:
\begin{align*}
    \sum_{p\in A}\sum_{j=0}^n (-1)^j h_j(p) &= \sum_{k\geq 2}\sum_{p\in A_k}\sum_{j=0}^n (-1)^j h_j(p)\\
    &= \sum_{k\geq 2}\sum_{p\in A_k}\left( h_0(p) +  \sum_{j=1}^n (-1)^j h_j(p)\right)\\
    &= \sum_{k\geq 2}\sum_{p\in A_k}\left( \sum_{q\in \mathfrak{h}(p)}\left( h_0(q) -(k-1) \right)  +  \sum_{q\in \mathfrak{h}(p)}\sum_{j=1}^n (-1)^j h_j(q)\right)\\
    &= \sum_{k\geq 2}\sum_{p\in A_k}\sum_{q\in \mathfrak{h}(p)}\sum_{j=1}^n (-1)^j h_j(q)  -\sum_{k\geq 2} \sum_{p\in A_k} (k-1). 
\end{align*}

Then, 
\begin{equation}
\label{eq_new_02}
    \sum_{p\in A}\sum_{j=0}^n (-1)^j h_j(p) =\sum_{p\in A}\sum_{q\in \mathfrak{h}(p)}\sum_{j=1}^n (-1)^j h_j(q)  -\sum_{k\geq 2} \sum_{p\in A_k} (k-1).
\end{equation}

Next, expanding the term corresponding to $B$ in Equation~\eqref{eq_new_01}, and using the definitions of the sets $B_k$ and $B$ alongside relations~\eqref{eq_cone_lambda}, \eqref{eq_cone_um}, and \eqref{eq_cone_zero}, we obtain
\begin{align*}
    \sum_{p\in B}\sum_{j=0}^n (-1)^jh_j(p)&= \sum_{k\geq 2}\sum_{p\in B_k} \left( h_0(p) - h_1(p) +  \sum_{j=2}^n h_j(p) \right)\\
    &= \sum_{k\geq 2}\sum_{p\in B_k}\left( \left[ \sum_{q\in \mathfrak{h}(p)}h_0(q) -\eta_0(p)\right] -\left[ \sum_{q\in \mathfrak{h}(p)}h_1(q) + (k - \eta_0(p)-1 )  \right] \right. \\
    &\qquad \left. + \sum_{q\in \mathfrak{h}(p)}\sum_{j=2}^n  (-1)^j h_j(q) \right)\\
    &=\sum_{k\geq 2}\sum_{p\in B_k}\sum_{q\in \mathfrak{h}(p)}\sum_{j=0}^n (-1)^j h_j(q)  -\sum_{k\geq 2}\sum_{p \in B_k}(\eta_0(p) + k -\eta_0(p)-1)\\
    &=\sum_{p\in B}\sum_{q\in \mathfrak{h}(p)}\sum_{j=0}^n (-1)^j h_j(q)   -\sum_{k\geq 2}\sum_{p \in B_k}(k-1). 
\end{align*}

Therefore, 
\begin{equation}
\label{eq_new_03}
    \sum_{p\in B}\sum_{j=0}^n (-1)^jh_j(p) = \sum_{p\in B}\sum_{q\in \mathfrak{h}(p)}\sum_{j=0}^n (-1)^j h_j(q)   -\sum_{k\geq 2}\sum_{p \in B_k}(k-1).
\end{equation}

Applying \eqref{eq_new_02} and \eqref{eq_new_03} to \eqref{eq_new_01} yields
\begin{align*}
    \chi(X) &= \sum_{p\in A}\sum_{q\in \mathfrak{h}(p)}\sum_{j=1}^n (-1)^j h_j(q) - \sum_{k\geq 2} \sum_{p\in A_k} (k-1)\\
    & \quad + \sum_{p\in B}\sum_{q\in \mathfrak{h}(p)}\sum_{j=0}^n (-1)^j h_j(q)   -\sum_{k\geq 2}\sum_{p \in B_k}(k-1) + \sum_{p \in C}\sum_{j=0}^n (-1)^j h_j(p)\\
    &=  \sum_{p\in A \cup B}\sum_{q\in \mathfrak{h}(p)}\sum_{j=0}^n (-1)^j h_j(q)   -\sum_{k\geq 2}\sum_{p \in A_k \cup B_k}(k-1) + \sum_{p \in C}\sum_{j=0}^n (-1)^j h_j(p)\\
    &=  \sum_{p\in \operatorname{Sing}(X)}\sum_{q\in \mathfrak{h}(p)}\sum_{j=0}^n (-1)^j h_j(q)   -\sum_{k\geq 2}\sum_{p \in \operatorname{M}(\mathcal{C}_k)}(k-1) + \sum_{p \in C}\sum_{j=0}^n (-1)^j h_j(p)\\
    &=  \sum_{p\in \operatorname{Sing}(X)}\sum_{q\in \mathfrak{h}(p)}\sum_{j=0}^n (-1)^j h_j(q)   -\sum_{k\geq 2}\# \operatorname{M}(\mathcal{C}_k) (k-1) + \sum_{p \in C}\sum_{j=0}^n (-1)^j h_j(p). 
\end{align*}

Therefore, 
\begin{equation}
\label{eq_new_04}
\chi(X)=\left( \sum_{p\in \operatorname{Sing}(X)}\sum_{q\in \mathfrak{h}(p)}\sum_{j=0}^n (-1)^j h_j(q) + \sum_{p \in C}\sum_{j=0}^n (-1)^j h_j(p) \right) -\sum_{k\geq 2}\# \operatorname{M}(\mathcal{C}_k) (k-1).  
\end{equation}

Note that
\[
    \operatorname{Sing}(\widetilde{\varphi})=\bigcup_{k \geq 0 } \operatorname{Crit}_k(\widetilde{\varphi})= \left( \bigcup_{p \in \operatorname{Sing}(X)}\mathfrak{h}(p) \right)\cup C.
\]
Applying this relationship between the sets and Theorem~\ref{theorem_characteristic_conley} to $\widetilde{X}$, we obtain 
\begin{align}
    \chi(\widetilde{X})& \ = \sum_{p \in \operatorname{Sing}(\widetilde{\varphi}) }\sum_{j=0}^n (-1)^j h_j(p) \ =   \sum_{p \in \operatorname{Sing}(X)} \sum_{q\in \mathfrak{h}(p) }\sum_{j=0}^n (-1)^j h_j(p) +  \sum_{p \in C}\sum_{j=0}^n (-1)^j h_j(p).    \label{eq_new_05} 
\end{align}

Substituting \eqref{eq_new_05} into \eqref{eq_new_04}, we obtain
\[
    \chi(X)=\chi(\widetilde{X}) -\sum_{k\geq 2}\# \operatorname{M}(\mathcal{C}_k) (k-1). 
\]

Therefore,
\[
    \chi(\widetilde{X})=\chi(X) + \sum_{k \geq 2 } (k-1)\cdot \# \operatorname{M}(\mathcal{C}_k). 
\]
\end{proof}

As a consequence of the equality in Theorem~\ref{theorem_characteristic_morsification}, we deduce the following result.

\begin{corollary}
\label{corollary_cara}
Let $X$ be a closed $n$-pseudomanifold with spherical-cone singularities endowed with a singular Morse-Smale flow $\varphi$. Then
\[
    \chi(X) = \sum_{i=0}^n (-1)^i c_i + \sum_{k \geq 2 } (1-k) \# \operatorname{M}(\mathcal{C}_k), 
\]
where $c_i  \ \coloneqq \displaystyle \!\! \sum_{p \in \operatorname{Sing}(\varphi)} \!\!\eta_{i}(p)$ for all $i=0, 1, \dots, n$, and $\#\operatorname{M}(\mathcal{C}_k)$ is the cardinality of the set $\operatorname{M}(\mathcal{C}_k)$. 
\end{corollary}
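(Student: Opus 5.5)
The plan is to combine Theorem~\ref{theorem_characteristic_morsification} with the classical Euler--Poincaré formula for the smooth Morse--Smale flow $\widetilde{\varphi}$ on the morsification $\widetilde{X}$. By Theorem~\ref{theorem_characteristic_morsification},
\[
    \chi(X) = \chi(\widetilde{X}) + \sum_{k\geq 2}(1-k)\,\#\operatorname{M}(\mathcal{C}_k),
\]
so it suffices to show that $\chi(\widetilde{X}) = \sum_{i=0}^n (-1)^i c_i$, where $c_i = \sum_{p\in\operatorname{Sing}(\varphi)}\eta_i(p)$.

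For this, I apply Theorem~\ref{theorem_characteristic_conley}, or equivalently the strong Morse inequalities with equality in top degree from Subsection~\ref{subsection_MS_background}, to the smooth manifold $\widetilde{X}$ with flow $\widetilde{\varphi}$. Periodic orbits contribute zero, as shown at the end of the proof of Theorem~\ref{theorem_characteristic_conley}. Each hyperbolic singularity $q$ of Morse index $\lambda$ has $h(q)=\es^\lambda$ by Example~\ref{example_conley_smooth}, so it contributes $(-1)^\lambda$. Therefore $\chi(\widetilde{X}) = \sum_{\lambda}(-1)^\lambda \widetilde{c}_\lambda$, where $\widetilde{c}_\lambda$ is the number of index-$\lambda$ singularities of $\widetilde{\varphi}$.

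It remains to identify $\widetilde{c}_\lambda$ with $c_\lambda$. The singular set decomposes as $\operatorname{Sing}(\widetilde{\varphi}) = \bigl(\bigcup_{p\in\operatorname{Sing}(X)}\mathfrak{h}(p)\bigr)\cup C$, as recorded in the proof of Theorem~\ref{theorem_characteristic_morsification}.
\begin{itemize}
\item For a cone singularity $p$, the construction in Lemma~\ref{lemma_morsification} produces exactly $\eta_\lambda(p)$ points of index $\lambda$ in $\mathfrak{h}(p)$.
\item For a regular singularity $x$ of index $\lambda$, Definition~\ref{definicao_natureza} gives $\eta_i(x)=\delta_{i\lambda}$, so $x$ contributes exactly one point of index $\lambda$.
\end{itemize}
Summing these gives $\widetilde{c}_\lambda = c_\lambda$, which completes the argument.

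The main obstacle is the identification step, namely verifying that the index counts of $\widetilde{\varphi}$ match the nature numbers. The small perturbation in Lemma~\ref{lemma_morsification}, used to achieve transversality, could in principle alter the critical set. I would argue that a $C^1$-small perturbation preserves hyperbolic singularities and their indices, so the counts are unchanged.
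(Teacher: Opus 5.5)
Your proposal is correct and follows essentially the same route as the paper. You rearrange Theorem~\ref{theorem_characteristic_morsification}, apply the smooth Euler--Poincaré formula $\chi(\widetilde{X})=\sum_i(-1)^i\widetilde{c}_i$ to $\widetilde{\varphi}$, and identify $\widetilde{c}_i=\sum_{p\in\operatorname{Sing}(\varphi)}\eta_i(p)$ through the fibers $\mathfrak{h}(p)$. Your explicit handling of regular singularities via $\eta_i(x)=\delta_{i\lambda}$, and your remark that the transversality perturbation preserves hyperbolic singularities and their indices, only make explicit points the paper leaves implicit.
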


\begin{proof}
 Let $(\widetilde{X}, \widetilde{\varphi}, \mathfrak{h}, \mathfrak{p})$ be a morsification of $(X,\varphi)$, as presented in Lemma~\ref{lemma_morsification}.
By Theorem~\ref{theorem_characteristic_morsification}, the following equality holds
\begin{align}
\label{eq_corolario_1}
    \chi(\widetilde{X})=\chi (X) + \sum_{k\geq 2}(k-1)\# \operatorname{M}(\mathcal{C}_k).
\end{align}
Since $\widetilde{X}$ is a closed smooth manifold, the results of~\cite{smale1960morse} yield the following equality
\begin{align}
\label{eq_corolario_2}
    \chi(\widetilde{X}) = \sum_{i=0}^n (-1)^i \widetilde{c}_i,
\end{align}
where $\widetilde{c}_i$ denotes the number of singular points of index $i$ of the Morse-Smale flow $\widetilde{\varphi}$. By~\eqref{eq_corolario_1} and~\eqref{eq_corolario_2},
\[
    \sum_{i=0}^n(-1)^i \widetilde{c}_i=\chi (X)+\sum_{k\geq 2}(k-1)\# \operatorname{M}(\mathcal{C}_k).
\]
Hence, 
\begin{align}
\label{eq_corolario_3}
    \chi(X)= \sum_{i=0}^n(-1)^i \widetilde{c}_i + \sum_{k\geq 2}(1-k)\# \operatorname{M}(\mathcal{C}_k).
\end{align}
On the other hand, 
if $p\in \operatorname{M}(\mathcal{C}_k)$ is a singularity of nature $a^{\eta_0(p)} s_1^{\eta_{1}(p)} \dots s_{n-1}^{\eta_{n-1}(p)} r^{\eta_n(p)}$, the points associated to $p$ on the morsification are
\[
    \mathfrak{h}(p)=\left\{ \widetilde{p}^1_0, \dots , \widetilde{p}^{\eta_0(p) }_0, \dots, \widetilde{p}^{1}_{\lambda}, \dots, \widetilde{p}^{\eta_\lambda (p)}_{\lambda}, \dots, \widetilde{p}^{1}_{n}, \dots, \widetilde{p}^{\eta_n(p)}_n \right\}, 
\]
where each $\widetilde{p}^i_{\lambda}$ is a nondegenerate critical point of index $\lambda$ of $\widetilde{\varphi}$, and
$\eta_{\lambda}(p)=\# \left\{ \widetilde{p}^1_{\lambda}, \dots , \widetilde{p}^{\eta_{\lambda}(p)}_{\lambda}  \right\}$, for all $\lambda \in\{0,\cdots,n\}$. It follows that
\[
    \widetilde{c}_i=\sum_{p\in \operatorname{Sing}(\varphi)}\eta_i(p), \qquad \forall \ i\in\{0,1,\dots, n\}.
\]
Then,
\[
    \sum_{i=0}^n(-1)^i \widetilde{c}_i = \sum_{i=0}^n(-1)^i \sum_{p\in \operatorname{Sing}(\varphi)}\eta_i(p).
\]

Therefore, upon substituting this expression into \eqref{eq_corolario_3}, we obtain
\[
 \chi(X) = \sum_{i=0}^n (-1)^i c_i + \sum_{k \geq 2 } (1-k) \# \operatorname{M}(\mathcal{C}_k).
\]
\end{proof}

The following examples illustrate the use of previous results relating the Euler-Poincaré characteristic to the system's dynamics, demonstrating how topological properties can be derived from dynamical data.

\begin{example}
Consider the closed $n$-pseudomanifold $X$ presented in Example~\ref{example_spheres}, equipped with the singular Morse-Smale flow introduced in that same example. By Theorem~\ref{theorem_characteristic_conley},
\[
    \chi (X) = (k-\eta_0) + (-1)^n(k-\eta_n) + \left( -(k-\eta_0 -1) +(-1)^n \eta_n \right),
\]
where the first term corresponds to the alternating sum of the Conley indices of all attracting singularities in the regular part, and the second term corresponds to the alternating sum of the Conley indices of all repelling singularities in the regular part, with the factor $(-1)^n$ arising from the dimension. The last term is the alternating sum of $h_*(p)$, where, again, the factor $(-1)^n$ is due to the dimension $n$. It is straightforward to see that $ \chi (X) = (-1)^n k +1$, that is,  
\begin{align}
\label{equacao_esferas_01}
    \chi(X) & = 
    \begin{cases} 
        k+1, \quad \text{if } n \text{ is even};\\
        1-k, \quad \text{if } n \text{ is odd}.
    \end{cases}
\end{align}

By Theorem~\ref{theorem_characteristic_morsification} and the previous equality, we have
$$
    \chi(\widetilde{X}) \ = \ \chi (X) + \sum_{k\geq 2} (k-1)\#\operatorname{M}(\mathcal{C}_k) \ = \ (-1)^n k + 1 +(k-1)\ = \ (-1)^n k + k.
$$
Thus,  
\begin{align}
\label{equacao_esferas_02}
    \chi(\widetilde{X}) &= 
    \begin{cases} 
        2k, \quad \text{if } n \text{ is even};\\
        0, \quad \text{if } n \text{ is odd}.
    \end{cases}
\end{align}
Finally, we recover the Euler-Poincaré characteristic of $X$ using an alternative approach given by the formula in  Corollary~\ref{corollary_cara}:
$$
    \chi(X) \ = \ \sum_{i=0}^n (-1)^ic_i + \sum_{k\geq 2}(1-k) \# \operatorname{M}(\mathcal{C}_k) \ =\ (k+(-1)^nk) +(1-k) \ = \ (-1)^nk +1. 
$$

Figure~\ref{figura_buque_atrator_repulsor_8_folhas} depicts the pair $(X,\varphi)$ for the particular case where $n=2$, $k=8$, $\eta_0=4$, and $\eta_2=4$.

\begin{figure}[!ht]
\centering
\begin{overpic}[width=5.5cm]{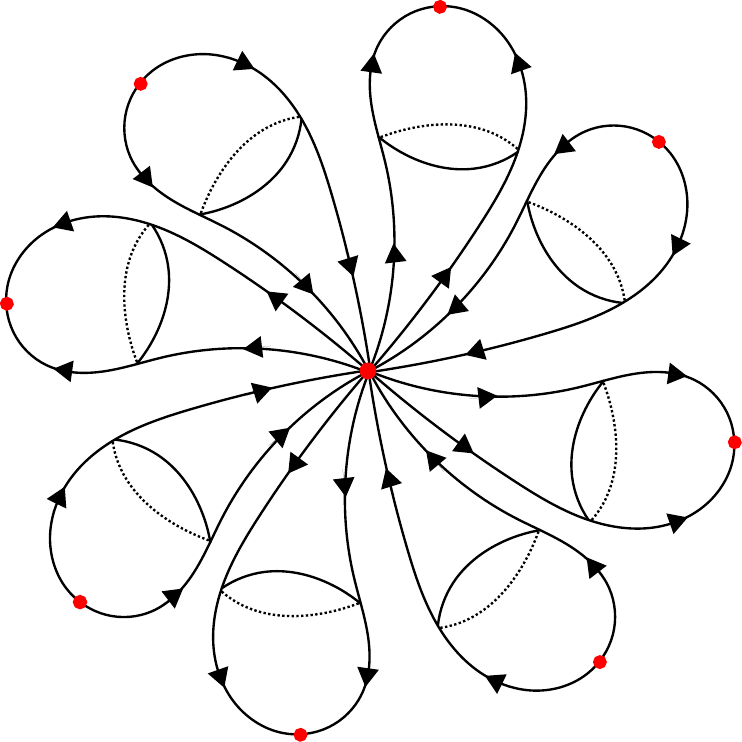}
\put(2,90){$X$}
\end{overpic}
\caption{Spherical-cone singularity $p \in \operatorname{M}(\mathcal{C}_8)$ of nature $a^4r^4$, in dimension $2$.}
\label{figura_buque_atrator_repulsor_8_folhas}
\end{figure}

In this case, equations~\eqref{equacao_esferas_01} and~\eqref{equacao_esferas_02} provide 
\(
    \chi(X) = k +1 = 9
\) and 
\(
    \chi (\widetilde{X}) = 2k = 16.
\)
\hfill $\diamond$
\end{example}

\begin{example}
Consider the $n$-pseudomanifold with spherical-cone singularities 
\(
X=\left(T^n\sqcup M_1 \sqcup M_2 \right)/\sim
\)
given in Example~\ref{example_torus}, where $n$ is an arbitrary even integer greater than $2$. As discussed previously,  $\operatorname{Sing}(X)=\{p_1, p_2\}$, where $p_1\in \operatorname{M}(\mathcal{C}_{2m_1+1})$ has nature $s_{\lambda}r^{2m_1}$ and $p\in \operatorname{M}(\mathcal{C}_{m_2+1})$ has nature $a^{m_2}s_{\lambda}$. We can  compute $\chi(X)$ using Theorem~\ref{theorem_characteristic_conley}:
\begin{align*}
    \chi(X) =& \sum_{p\in \operatorname{Sing}(\varphi)} \sum_{j=0}^n (-1)^jh_j(p)\\
    =& \left( \sum_{j=0}^n (-1)^j c_j -2(-1)^{\lambda} \right) + (-1)^n m_2 +\sum_{j=0}^n (-1)^jh_j(p_1) + \sum_{j=0}^n(-1)^j h_j(p_2)\\
    =& \ \chi(T^n) -2(-1)^{\lambda} +m_2 +\sum_{j=0}^n (-1)^jh_j(p_1) + \sum_{j=0}^n (-1)^jh_j(p_2).
 \end{align*}
Here, the first term in the second equality comes from the remaining flow singularities in the regular part of $T^n\setminus \{ p_1, p_2\}$, since $2$ singularities of the set $c_{\lambda}$ were mapped to the cone points; the second term comes from the $m_2$ repelling singularities in the regular part of $M_2$ that remain after the quotient. To cover all singularities in $\operatorname{Sing}(\varphi)$, it only remains to include the alternating sum of the Conley indices of the singularities $p_1$ and $p_2$, which yields the second equality. In the last equality, we used the fact 
\[
\chi(T^n) =\sum_{j=0}^n(-1)^jc_j
\]
and that $n$ is even. Now, using that $\chi(T^n)=0$ and the numerical Conley indices of $p_1$ and $p_2$ compute in Example~\ref{example_torus}, we obtain 
\begin{align*}
\chi(X)=& \ \chi(T^n) -2(-1)^{\lambda} +m_2 +\sum_{j=0}^n(-1)^j h_j(p_1) + \sum_{j=0}^n(-1)^j h_j(p_2)\\
=& \ 0 -2(-1)^{\lambda} + m_2 + (-2m_1 +(-1)^{\lambda} + 2m_1) + (-1)^{\lambda}\\
=& \ m_2. 
\end{align*}
Therefore, $\chi(X)=m_2$. Now, by Theorem~\ref{theorem_characteristic_morsification}, we have 
\begin{align*}
    \chi(\widetilde{X}) =& \ \chi(X) + \sum_{k\geq 2}(k-1)\#\operatorname{M}(\mathcal{C}_k)\\
    =& \ m_2 + ((2m_1+ 1)-1) + ((m_2+1)-1)\\
    =& \ 2(m_1+m_2). 
\end{align*}
Hence, $\chi(\widetilde{X})=2(m_1+m_2)$, where $\widetilde{X}$ is the smooth $n$-manifold obtained from $X$ by Lemma~\ref{lemma_morsification}. 

A particular case of this example for $n=2$, $m_1=2$, and $m_2=1$ can be seen in Figure~\ref{fig_example_002}. In this case, 
$\chi(X)=m_2=1$ and $\chi(\widetilde{X})=2(m_1+m_2)=6.$  
\hfill $\diamond$

\begin{figure}[!ht]
\centering
\begin{overpic}[width=5cm]{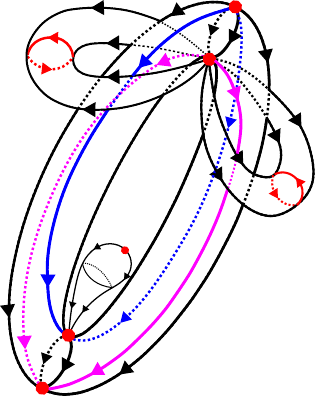}
\put(44,73){$p_1$}
\put(30,40){$p_2$}
\end{overpic}
\caption{Closed $2$-pseudomanifold with spherical-cone singularity.}
\label{fig_example_002}
\end{figure}
\end{example}

\section{Intersection Homology for Pseudomanifolds with Spherical-Cone Singularities}
\label{section_IH}

In this section, we present important results concerning intersection homology of pseudomanifolds equipped with a singular Morse-Smale flow. Subsection~\ref{subsection_IH_background}, 
provides the necessary background on intersection homology, including the basic definitions and results used in our arguments.

We begin with a result that extends a classical theorem in intersection homology. We emphasize that it is purely topological in nature and does not require the pseudomanifold to be endowed with a dynamical system.

\begin{theorem}
\label{teo_topologico}
Let $X$ be an oriented closed $n$-pseudomanifold with spherical-cone singularities. Let $\overline{p}$ be any perversity on $X$. Then the intersection homology groups are given by
            \begin{equation}
            \label{eq_teorema}
                IH^{\overline{p}}_{i}(X)\cong
                \begin{cases}
                    H_{i}(X), & \text{if } i > n-1 -\overline{p}(n), \\
                    \Ima \left(H_i(X\setminus \operatorname{Sing} (X))\longrightarrow H_i(X) \right), & \text{if } i = n-1 -\overline{p}(n),\\
                    H_{i}(X \setminus \operatorname{Sing} (X)), & \text{if } i < n-1 -\overline{p}(n).
                \end{cases}
            \end{equation}
Furthermore, for any integer $i$ such that $1 < i < n-1$ and $i \neq n-1-\overline{p}(n)$, the following isomorphisms hold
\begin{equation}
\label{eq_teorema_2}
    IH^{\overline{p}}_i(X) \cong H_i(X) \cong H_i(X \setminus \operatorname{Sing} (X)).
\end{equation}
\end{theorem}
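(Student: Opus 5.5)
This is the isolated-singularity computation of intersection homology (Goresky--MacPherson, King), adapted to links that are disjoint unions of spheres. The plan is a Mayer--Vietoris decomposition combined with the cone formula. Write $\Sigma=\operatorname{Sing}(X)$, which is finite by compactness. For each $p\in\Sigma$ choose a conical neighborhood $N_p\cong\bigvee_{i=1}^k\overline{\mathbb{B}}_i$ with link $L_p=\bigsqcup_{i=1}^k\mathbb{S}^{n-1}_i$, as in Remark~\ref{remark_link}. Put $U=\bigsqcup_p\operatorname{int}N_p$ and $V=X\setminus\Sigma$. Then $U\cap V\simeq\bigsqcup_p L_p\times(0,1)$.

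The first step is to collect the local computations. On the regular part, intersection homology agrees with ordinary homology, so $IH^{\overline{p}}_*(V)=H_*(V)$ and $IH^{\overline{p}}_*(U\cap V)=H_*(\bigsqcup L_p)$. For the open cone over an $(n-1)$-dimensional link, only the stratum of codimension $n$ matters, and the cone formula gives
\[
IH^{\overline{p}}_i(c(L_p))=IH^{\overline{p}}_i(L_p)=H_i(L_p)\ \text{for } i<n-1-\overline{p}(n),\qquad IH^{\overline{p}}_i(c(L_p))=0\ \text{for } i\geq n-1-\overline{p}(n),
\]
with the reduced convention in degree $0$. I will verify this directly from the allowability condition $\dim(|\xi|\cap\{p\})\leq i-n+\overline{p}(n)$. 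An $i$-chain may meet $p$ only when $i\geq n-\overline{p}(n)$, which allows coning off $(i-1)$-cycles in that range.

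Next, I plug these into the Mayer--Vietoris sequence for intersection homology:
\[
\cdots\to\bigoplus_p H_i(L_p)\to IH_i(U)\oplus H_i(V)\to IH^{\overline{p}}_i(X)\to\bigoplus_p H_{i-1}(L_p)\to\cdots
\]
I then compare it with the ordinary Mayer--Vietoris sequence for $H_*(X)$, where $H_*(U)$ is the homology of points. Set $t=n-1-\overline{p}(n)$ and split into three cases.
\begin{itemize}
\item For $i>t$: both $IH_i(U)$ and $IH_{i-1}(U)$ vanish in the relevant range. Meanwhile $H_i(U)=0$ for $i\geq1$ in the ordinary sequence. A five-lemma comparison, using the natural map $IH\to H$ (every allowable chain is a chain), then gives $IH^{\overline{p}}_i(X)\cong H_i(X)$. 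Low degrees, where $H_0(U)\neq0$, are absorbed because $t\geq1$ forces $i\geq2$ there.
\item For $i<t$: the maps $H_j(L_p)\to IH_j(U)$ are isomorphisms for $j\leq i$. Exactness then forces $H_i(V)\to IH^{\overline{p}}_i(X)$ to be an isomorphism.
\item For $i=t$: $IH_t(U)=0$, while $IH_{t-1}(U)\cong H_{t-1}(L)$ is hit isomorphically. So $IH_t(X)$ is the cokernel of $\bigoplus H_t(L_p)\to H_t(V)$. By the ordinary sequence this cokernel is exactly the image of $H_t(V)\to H_t(X)$.
\end{itemize}

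The second statement, \eqref{eq_teorema_2}, follows from the first. Since $H_i(L_p)=0$ for $0<i<n-1$, the ordinary Mayer--Vietoris sequence gives $H_i(V)\cong H_i(X)$ for $1<i<n-1$. Combined with \eqref{eq_teorema}, this identifies all three groups whenever $i\neq t$.

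The main obstacle is handling degrees $0$ and $1$, and the boundary case $t\in\{0,1\}$ with links that have $k$ components. Disconnectedness of the link contributes $H_0(L_p)=\mathbb{Z}^k$, and the connecting map in degree $1$ creates the extra loops seen in Proposition~\ref{proposition_indice}. I would first check the low-degree cases separately with reduced homology, and verify that the cone formula for disconnected links still gives $IH_0(c(L))=\mathbb{Z}$ when $t\leq 0$. I would also confirm the degree bookkeeping of the cone formula against a simple wedge of two spheres.
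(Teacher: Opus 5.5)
Your proposal is correct and follows the paper's proof. It uses the same cover of $X$ by $X\setminus\operatorname{Sing}(X)$ and the open cones $c(L_p)$, and the same cone formula ($IH^{\overline{p}}_i(c(L_p))\cong H_i(L_p)$ for $i<t$, zero for $i\geq t$, where $t=n-1-\overline{p}(n)$). It also makes the same three-case comparison of the intersection and ordinary Mayer--Vietoris sequences; your cokernel description in the cases $i<t$ and $i=t$ is somewhat cleaner than the paper's diagram chase.

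The only real difference is in \eqref{eq_teorema_2}. You read off $H_i(X\setminus\operatorname{Sing}(X))\cong H_i(X)$ for $1<i<n-1$ directly from the ordinary sequence, whereas the paper re-runs the Mayer--Vietoris arguments on each side of $t$. Your route is shorter, and it shows that the exclusion $i\neq t$ is superfluous.

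The low-degree worries in your last paragraph are moot. Since $\overline{p}(n)\leq n-2$, you always have $t\geq 1$, so $IH^{\overline{p}}_0(c(L_p))\cong H_0(L_p)\cong\mathbb{Z}^k$ and no reduced convention is ever needed.
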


\begin{proof}
Let $\operatorname{Sing}(X)=\{p_1,\ldots, p_{m_0} \}$ be the singular set of $X$.
For each singularity $p_j \in \operatorname{Sing}(X)$, consider the open cone $c(L_{p_j})$ over its link $L_{p_j}$, and define the following open subsets of $X$
\[
    U_1 = X \setminus \operatorname{Sing}(X) \ \ \text{and} \ \ U_2=\bigcup_{j=1}^{m_0} c(L_{p_j}).
\]
These open subsets cover $X$, that is, $U_1 \cup U_2 = X$. Furthermore, there is a homotopy equivalence
\[
    U_1 \cap U_2  \simeq \bigcup_{j=1}^{m_0}L_{p_j}.
\]
Since $L_{p_j}$ is a smooth manifold, then for any perversity $\overline{p}$
\begin{align}
\label{eq_2}
    IH^{\overline{p}}_i(U_1 \cap U_2) &  \ \cong \ \bigoplus_{j=1}^{m_0}IH^{\overline{p}}_i(L_{p_j}) \ \cong \ \bigoplus_{j=1}^{m_0} H_i(L_{p_j}).
\end{align}
Analogously,
\begin{align}
\label{eq_3}
    H_i(U_1\cap U_2) & \cong \bigoplus_{j=1}^{m_0}H_i(L_{p_j}). 
\end{align}
By the definition of $U_2=\bigcup_{j=1}^{m_0}c(L_{p_j})$, we have the following isomorphisms
\begin{align}
\label{eq_4}
    IH^{\overline{p}}_i(U_2)& \cong \bigoplus_{j=1}^{m_0}IH^{\overline{p}}_i(c(L_{p_j}))
\end{align}
and
\begin{align}
\label{eq_5}
    H_i(U_2)\cong \bigoplus_{j=1}^{m_0} H_i(c(L_{p_j})). 
\end{align}

Since $U_1$ and $U_2$ are open subsets whose union is $X$, and given that the singularities are isolated, we can apply the Mayer-Vietoris sequence in intersection homology. For any perversity $\overline{p}$, this sequence is given by
\begin{equation*}
\begin{gathered}
\begin{tikzcd}[column sep=1.5em, row sep=1.5em, font=\normalsize] 
\cdots \arrow{r} & IH^{\overline{p}}_{i}( U_1\cap U_2) \arrow{r}{f=(i_{\ast},j_{\ast})}                              & IH^{\overline{p}}_{i}( U_1 )\oplus IH^{\overline{p}}_{i}(U_2) \arrow{r}{g=k_{\ast}-l_{\ast}}& IH^{\overline{p}}_{i}(X) \arrow{lldd}{\partial_{\ast}} \\
                                           &                                                               &                                                               &                                        \\
& IH^{\overline{p}}_{i-1}(U_1 \cap U_2) \arrow{r}& IH^{\overline{p}}_{i-1}(U_1 )\oplus IH^{\overline{p}}_{i-1}(U_2) \arrow{r}& IH^{\overline{p}}_{i-1}(X) \arrow{r} & \cdots 
\end{tikzcd}
\end{gathered}
\end{equation*}
where $i_{\ast}$, $j_{\ast}$, $k_{\ast}$, and $l_{\ast}$ are the maps induced by the inclusions $i\colon U_1 \cap U_2\to U_1$, $j\colon U_1  \cap U_2 \to U_2$, $k\colon U_1 \to X$, and $l\colon U_2\to X$. Using equations~\eqref{eq_2} and~\eqref{eq_4}, and the fact that $U_1$ is smooth, we obtain
\begin{equation}
\label{eq_6}
\begin{tikzcd}[baseline=(current bounding box.center), column sep=0.6em, row sep=1em, font=\normalsize]
\cdots \arrow{r} & \displaystyle\bigoplus_{j=1}^{m_0}H_i(L_{p_j}) \arrow{r}{f}                               & H_{i}(U_1)\oplus \displaystyle\bigoplus_{j=1}^{m_0}IH^{\overline{p}}_i(c(L_{p_j})) \arrow{r}{g} & IH^{\overline{p}}_{i}(X) \arrow{lldd}{\partial_{\ast}} \\
                                         &                                                               &                                                                                               &                                        \\
& \displaystyle\bigoplus_{j=1}^{m_0}H_{i-1}(L_{p_j}) \arrow{r} & H_{i-1}(U_1)\oplus \displaystyle\bigoplus_{j=1}^{m_0}IH^{\overline{p}}_{i-1}(c(L_{p_j})) \arrow{r}& IH^{\overline{p}}_{i-1}(X) \arrow{r} & \cdots 
\end{tikzcd}
\end{equation}
Analogously, the Mayer-Vietoris sequence for the pair $(U_1, U_2)$ in singular homology is given by
\begin{equation}
\label{eq_7}
\begin{tikzcd}[baseline=(current bounding box.center), column sep=0.8em, row sep=1em, font=\normalsize] 
\cdots \arrow{r} &\displaystyle\bigoplus_{j=1}^{m_0}H_i(L_{p_j}) \arrow{r}{f}                               & H_{i}(U_1)\oplus \displaystyle\bigoplus_{j=1}^{m_0} H_i(c(L_{p_j})) \arrow{r}{g}& H_{i}(X) \arrow{lldd}{\partial_{\ast}} \\
                                         &                                                               &                                                                                               &                                        \\
& \displaystyle\bigoplus_{j=1}^{m_0}H_{i-1}(L_{p_j}) \arrow{r} & H_{i-1}(U_1)\oplus \displaystyle\bigoplus_{j=1}^{m_0} H_{i-1}(c(L_{p_j})) \arrow{r} & H_{i-1}(X) \arrow{r} & \cdots   
\end{tikzcd}
\end{equation}
For each $j \in \{1, \dots, m_0\}$, the link $L_{p_j}$ is a compact smooth $(n-1)$-dimensional manifold. Therefore, according to \cite[Proposition 4.7.2]{kirwan2006}, we have
\begin{align*}
    IH^{\overline{p}}_i(c(L_{p_j}))=
    \begin{cases}
    IH^{\overline{p}}_i(L_{p_j}),& \text{ if } i<n-1-\overline{p}((n-1)+1);\\
    0, & \text{ if } i \geq n-1-\overline{p}((n-1)+1).
    \end{cases}
\end{align*}
Then,
\begin{align}
\label{eq_8}
    IH^{\overline{p}}_i(c(L_{p_j}))=
    \begin{cases}
    H_i(L_{p_j}),& \text{ if } i<n-1-\overline{p}(n);\\
    0, & \text{ if } i \geq n-1-\overline{p}(n).
    \end{cases}
\end{align}

Up to this point in the proof, we have carried out a general analysis that holds for an arbitrary perversity without additional hypotheses. We now proceed to analyze the assumptions of each part of the theorem to complete the proof.

Suppose that $i < n-1-\overline{p}(n)$. Combining~\eqref{eq_6} and~\eqref{eq_8}, we obtain the long exact sequence
\begin{equation*}
\begin{gathered}
\begin{tikzcd}[column sep=1.2em, row sep=1.2em, font=\normalsize] 
\cdots \arrow{r} & \displaystyle\bigoplus_{j=1}^{m_0}H_i(L_{p_j}) \arrow{r}{f}                              & H_{i}( U_1 )\oplus \displaystyle\bigoplus_{j=1}^{m_0}H_i(L_{p_j}) \arrow{r}{g} & IH^{\overline{p}}_{i}(X) \arrow{lldd}{\partial_{\ast}} \\
                                           &                                                               &                                                                 &                                        \\
& \displaystyle\bigoplus_{j=1}^{m_0}H_{i-1}(L_{p_j}) \arrow{r} & H_{i-1}(U_1)\oplus \displaystyle\bigoplus_{j=1}^{m_0}H_{i-1}(L_{p_j}) \arrow{r}& IH^{\overline{p}}_{i-1}(X) \arrow{r} & \cdots 
\end{tikzcd}
\end{gathered}
\end{equation*}

Using the fact that $f=(i_{\ast}, j_{\ast})$, we deduce that $\Ker f = 0$. Consequently, the First Isomorphism Theorem implies that $\Ima f \cong \bigoplus_{j=1}^{m_0}H_i(L_{p_j})$. By exactness, we have $\Ker g = \Ima f \cong \bigoplus_{j=1}^{m_0}H_i(L_{p_j})$. Thus, applying the First Isomorphism Theorem to $g$, we obtain
\begin{align*}
    \Ima g &\cong \frac{ H_i(U_1) \oplus \bigoplus_{j=1}^{m_0}H_i(L_{p_j}) }{\Ker g} \\
           &\cong \frac{ H_i(U_1) \oplus \bigoplus_{j=1}^{m_0}H_i(L_{p_j}) }{\bigoplus_{j=1}^{m_0}H_i(L_{p_j})} \\
           &\cong H_i(U_1).
\end{align*}
On the other hand, by the exactness of the sequence and the fact that $f$ is injective, we have $\Ker \partial_{\ast} = \Ima g$ and $\Ima \partial_{\ast} = \Ker f = 0$. This means that $\partial_{\ast}$ is the zero map, which yields $\Ima g = IH_{i}^{\overline{p}}(X)$. Therefore, if $i < n-1-\overline{p}(n)$, then $IH^{\overline{p}}_i(X) \cong H_i (X \setminus \operatorname{Sing}(X))$.

Now, assuming that $i > n-1-\overline{p}(n)$, this implies that $i > i-1 \geq n-1-\overline{p}(n) \geq 1$ because, for any perversity, we have $\overline{p}(n) \leq \overline{t}(n) = n-2$. Therefore, $H_i(c(L_{p_j})) \cong 0$, since $c(L_{p_j})$ is contractible. Thus, by \eqref{eq_6}, \eqref{eq_7}, and \eqref{eq_8}, we obtain the following commutative diagram, where the maps $\omega_k$ are induced by the natural chain inclusions $IC_{\ast}^{\overline{p}} \hookrightarrow C_{\ast}$:

\begin{equation*}
\begin{gathered}
\begin{tikzcd}[column sep=1.5em, row sep=1.5em, font=\normalsize] 
\displaystyle\bigoplus_{j=1}^{m_0}  H_{i}(L_{p_j}) \arrow{dd}{\omega_1}\arrow{r} & H_{i}(U_1) \arrow{dd}{\omega_2}\arrow{r}& IH^{\overline{p}}_i(X) \arrow{dd}{\omega_3} \arrow{r}& \displaystyle\bigoplus_{j=1}^{m_0} H_{i-1}(L_{p_j}) \arrow{dd}{\omega_4}\arrow{r}&  H_{i-1}(U_1)\arrow{dd}{\omega_5}\\
                                                             &                                             &                                             &                                                                &                                   \\
\displaystyle\bigoplus_{j=1}^{m_0} H_{i}(L_{p_j}) \arrow{r}                            & H_{i}(U_1) \arrow{r}                            & H_i(X) \arrow{r}                            & \displaystyle\bigoplus_{j=1}^{m_0} H_{i-1}(L_{p_j})  \arrow{r}                            &  H_{i-1}(U_1)         
\end{tikzcd}
\end{gathered}
\end{equation*}
Since $\omega_1$, $\omega_2$, $\omega_4$, and $\omega_5$ are isomorphisms, the Five Lemma implies that $\omega_3$ is also an isomorphism. Therefore, if $i > n-1-\overline{p}(n)$, we have $IH^{\overline{p}}_i(X) \cong H_i(X)$.

Finally, suppose that $i = n-1-\overline{p}(n)$. By \eqref{eq_8}, we have $IH^{\overline{p}}_i(c(L_{p_j})) = 0$ and $IH^{\overline{p}}_{i-1}(c(L_{p_j})) = H_{i-1}(L_{p_j})$. Furthermore, $H_{i}(c(L_{p_j})) = 0$. Using the exact sequences in \eqref{eq_6} and \eqref{eq_7}, we obtain the commutative diagram
\begin{equation*}
\begin{gathered}
\begin{tikzcd}[column sep=1em, row sep=1.5em, font=\small ] 
\displaystyle\bigoplus_{j=1}^{m_0}  H_{i}(L_{p_j}) \arrow{dd}{\omega_1}\arrow{r} & H_{i}(U_1) \arrow{dd}{\omega_2}\arrow{r}{g}& IH^{\overline{p}}_i(X) \arrow{dd}{\omega_3} \arrow{r}{\partial_{\ast}}& \displaystyle\bigoplus_{j=1}^{m_0} H_{i-1}(L_{p_j}) \arrow{dd}{\omega_4}\arrow{r}{f}&  H_{i-1}(U_1)\displaystyle\bigoplus_{j=1}^{m_0} H_{i-1}(L_{p_j}) \arrow{dd}{\omega_5}\\
                                                             &                                             &                                             &                                                                &                                   \\
\displaystyle\bigoplus_{j=1}^{m_0} H_{i}(L_{p_j}) \arrow{r}                            & H_{i}(U_1) \arrow{r}{g'}                            & H_i(X) \arrow{r}                            & \displaystyle\bigoplus_{j=1}^{m_0} H_{i-1}(L_{p_j})  \arrow{r}                            &  H_{i-1}(U_1) \displaystyle\bigoplus_{j=1}^{m_0}H_{i-1}(c(L_{p_j}))         
\end{tikzcd}
\end{gathered}
\end{equation*}
Since $\Ker f = \{0\}$, by exactness, we have $\Ima \partial_{\ast} = \Ker f = \{0\}$; therefore, $\partial_{\ast}$ is the zero map. Thus, by exactness
\[
IH^{\overline{p}}_i(X) = \Ker \partial_{\ast} = \Ima g.
\]

We know that $\omega_2$ is an isomorphism, and the diagram is commutative, so
\[
g'\circ \omega_2=\omega_3\circ g \ \Rightarrow \ g'=\omega_3\circ g\circ \omega_2^{-1}.
\]
Hence, $\Ima g'= \Ima (\omega_3\circ g)$. By the exactness of the rows and the commutativity of the diagram, the fact that $\omega_1$ and $\omega_2$ are isomorphisms guarantees that the restriction $\restr{\omega_3}{\Ima g}$ is injective. Therefore, $\Ima g' \cong \Ima g$, and we conclude that if $i=n-1-\overline{p}(n)$, then
\begin{align*}
    IH^{\overline{p}}_i(X) & \cong  \Ima \left(H_i(U_1)\longrightarrow H_i(X)\right)\\
    &= \Ima \left(H_i(X\setminus \operatorname{Sing}(X))\longrightarrow H_i(X)\right) .
\end{align*}

This concludes the proof of the first part of the theorem, yielding the isomorphism~\eqref{eq_teorema}.

To prove the second part of the theorem, namely that for $1 < i < n-1$ with $i \neq n-1-\overline{p}(n)$ we have the isomorphisms
\[
    IH^{\overline{p}}_i(X) \cong H_i(X) \cong H_i(X \setminus \operatorname{Sing}(X)),
\]
it suffices to show that if $1 < i < n-1-\overline{p}(n)$ then $IH^{\overline{p}}_i(X) \cong H_i(X)$, and if $n-1-\overline{p}(n) < i < n-1$ then $IH^{\overline{p}}_i(X) \cong H_i(X \setminus \operatorname{Sing}(X))$, since we have already established the isomorphism~\eqref{eq_teorema}.

First, suppose that $i \in \mathbb{Z}$ satisfies $1 < i < n-1-\overline{p}(n)$, then we necessarily have $0 < i-1 < i < n-1-\overline{p}(n)$ and $\overline{p}(n) < \overline{t}(n) = n-2$. By \eqref{eq_8}, we have $IH^{\overline{p}}_i(c(L_{p_j})) \cong H_i(L_{p_j})$ and $IH^{\overline{p}}_{i- 1}(c(L_{p_j})) \cong H_{i-1}(L_{p_j})$. Furthermore, since $0 < i-1 < i < n-1 -\overline{p}(n) < n-1$, we have
\[
    H_i(L_{p_j})=H_{i-1}(L_{p_j})=H_i(c(L_{p_j}))=H_{i-1}(c(L_{p_j}))\cong 0.
\]

From this observation and the exact sequences in \eqref{eq_6} and \eqref{eq_7}, we obtain the commutative diagram:
\begin{equation*}
\begin{gathered}
\begin{tikzcd}[column sep=1.1em, row sep=1.5em, font=\normalsize] 
\displaystyle\bigoplus_{j=1}^{m_0} H_{i}(L_{p_j}) \arrow{dd}{\omega_1}\arrow{r} &  H_{i}(U_1) \arrow{dd}{\omega_2}\arrow{r}&  IH^{\overline{p}}_i(X) \arrow{dd}{\omega_3} \arrow{r}& \displaystyle\bigoplus_{j=1}^{m_0}  H_{i-1}(L_{p_j}) \arrow{dd}{\omega_4}\arrow{r}&  H_{i-1}(U_1)  \arrow{dd}{\omega_5}\\
                                                             &                                             &                                             &                                                                &                                   \\
\displaystyle\bigoplus_{j=1}^{m_0} H_{i}(L_{p_j}) \arrow{r}                            &  H_{i}(U_1) \arrow{r}                            &  H_i(X) \arrow{r}                            &  \displaystyle\bigoplus_{j=1}^{m_0} H_{i-1}(L_{p_j})  \arrow{r}                            &  H_{i-1}(U_1)                        
\end{tikzcd}
\end{gathered}
\end{equation*}

By the Five Lemma, we conclude that $IH^{\overline{p}}_i(X) \cong H_i(X)$ for $1 < i < n-1-\overline{p}(n)$. Combining this with the isomorphism~\eqref{eq_teorema} yields
\begin{equation}
\label{eq_01}
    IH^{\overline{p}}_i(X) \cong H_i(X) \cong H_i(X \setminus \operatorname{Sing} (X)), \quad \text{whenever} \quad 1 < i < n-1-\overline{p}(n).
\end{equation}

Finally, suppose that $i \in \mathbb{Z}$ and $n-1-\overline{p}(n) < i < n-1$. Then we necessarily have $n-1-\overline{p}(n) \leq i-1 < i$. By \eqref{eq_8} and \eqref{eq_7}, we have the following exact sequence
\begin{equation*}
\begin{gathered}
\begin{tikzcd}[column sep=1em, row sep=1.5em, font=\small] 
\displaystyle\bigoplus_{j=1}^{m_0}H_i(L_{p_j}) \arrow{r}{f}  & H_{i}(U_1) \arrow{r}{g} & IH^{\overline{p}}_{i}(X) \arrow{r}{\partial_{\ast}} & \displaystyle\bigoplus_{j=1}^{m_0}H_{i-1}(L_{p_j}) \arrow{r} & H_{i-1}(U_1) \arrow{r}& IH^{\overline{p}}_{i-1}(X)                
\end{tikzcd}
\end{gathered}
\end{equation*}
Furthermore, if $n-1-\overline{p}(n) < i < n-1$, then $1 \leq n-1-\overline{p}(n) \leq i-1 < i < n-1$, and we have 
\begin{align*}
    H_i(L_{p_j})&=H_i(\cup \es^{n-1}) \cong \bigoplus H_{i}(\es^{n-1})\cong 0; \\
    H_{i-1}(L_{p_j})&=H_{i-1}(\cup \es^{n-1}) \cong \bigoplus H_{i-1}(\es^{n-1}) \cong 0.
\end{align*}
Thus, we obtain the exact sequence:
\begin{equation*}
\begin{gathered}
\begin{tikzcd}[column sep=1em, row sep=1.5em, font=\small] 
0 \arrow{r}{f}  & H_{i}(U_1) \arrow{r}{g} & IH^{\overline{p}}_{i}(X) \arrow{r}{\partial_*} & 0  \arrow{r} & H_{i-1}(U_1) \arrow{r}& IH^{\overline{p}}_{i-1}(X) .
\end{tikzcd}
\end{gathered}
\end{equation*}
By exactness, we have
\begin{align}
\label{eq_9}
    \Ima  g= \Ker \partial_{\ast} \cong IH^{\overline{p}}_i(X).
\end{align}
Again, by exactness, we have
\begin{align}
\label{eq_10}
    \{0\}=\Ima f = \Ker g.
\end{align}
From \eqref{eq_9} and \eqref{eq_10}, we see that $g$ is an isomorphism; hence, $IH^{\overline{p}}_i(X) \cong H_i(U_1)$. Therefore, if $n-1-\overline{p}(n) < i < n-1$, then $IH^{\overline{p}}_i(X) \cong H_i(X\setminus \operatorname{Sing}(X))$. Combining this with the isomorphism \eqref{eq_teorema} yields
\begin{equation}
\label{eq_02}
    IH^{\overline{p}}_i(X) \cong H_i(X) \cong H_i(X \setminus \operatorname{Sing} (X)), \quad \text{whenever} \quad n-1-\overline{p}(n) < i < n-1.
\end{equation}

We conclude the proof by observing that the isomorphisms \eqref{eq_01} and \eqref{eq_02} together imply
\[
    IH^{\overline{p}}_i(X) \cong H_i(X) \cong H_i(X \setminus \operatorname{Sing} (X)), \quad \text{for} \quad 1 < i < n-1 \quad \text{with} \quad i \neq n-1-\overline{p}(n).
\]
\end{proof}

\begin{remark}
Observe that the second part of Theorem~\ref{teo_topologico}, namely the isomorphism given in \eqref{eq_teorema_2}, only applies when the dimension is sufficiently large. Indeed, for dimensions $n=2$ and $n=3$, there is no integer $i$ satisfying 
\[
    1 < i < n-1 \quad \text{and} \quad i \neq n-1-\overline{p}(n).
\]
However, for $n \geq 4$, this range of degrees becomes nonempty, rendering the isomorphisms applicable. In light of this observation, for $n=2$ and $n=3$, Theorem~\ref{teo_topologico} reduces solely to the isomorphism \eqref{eq_teorema}. Consequently, it recovers Theorem~2 established in \cite{LimaTenorio_2}. Thus, the present theorem provides a genuine extension of that previous result to arbitrary dimensions.
\end{remark}

\begin{example}
Still regarding Theorem~\ref{teo_topologico}, the hypothesis of spherical-cone singularities is essential to obtain the isomorphisms given in \eqref{eq_teorema_2}. In general, the isomorphisms need not hold when the singular part of $X$ contains cone singularities other than spherical-cone singularities. In this example, we provide such a counterexample.

Consider the suspension of the $4$-dimensional torus, $\Sigma T^4$, where $T^4 = \es^1\times \es^1\times \es^1\times \es^1$. The space $\Sigma T^4$ is a $5$-dimensional pseudomanifold with two isolated singularities, $\{p, q\}$. We know that 
\[
    H_i(\Sigma T^4\setminus \{ p,q \}) \cong H_i(T^4),
\]
since $\Sigma T^4\setminus \{p,q\}$ is homotopy equivalent to $T^4$. Furthermore, it is a standard fact that $\widetilde{H}_i(\Sigma T^4)\cong \widetilde{H}_{i-1}(T^4)$ for all $i$, where $\widetilde{H}_{\ast}( - )$ denotes reduced singular homology. Therefore, we obtain the homology groups presented in Table~\ref{tab_counterexample}. 

\begin{table}[htpb]
\centering
\renewcommand{\arraystretch}{1.3}
\begin{tabular}{|c|c|c|}
\hline
 \ \ $i$ \ \ & $H_i(\Sigma T^4 \setminus \{p, q\})$ & $H_i(\Sigma T^4)$ \\
\hline
$0$ & $\mathbb{Z}$   & $\mathbb{Z}$   \\
\hline
$1$ & $\mathbb{Z}^4$ & $0$            \\
\hline
$2$ & $\mathbb{Z}^6$ & $\mathbb{Z}^4$ \\
\hline
$3$ & $\mathbb{Z}^4$ & $\mathbb{Z}^6$ \\
\hline
$4$ & $\mathbb{Z}$   & $\mathbb{Z}^4$ \\
\hline
$5$ & $0$            & $\mathbb{Z}$   \\
\hline
\end{tabular}
\vspace{0.2cm}
\caption{Comparison of singular homology groups for the regular stratum and the total space of $\Sigma T^4$.}
\label{tab_counterexample}
\end{table}

Thus, we clearly have $H_2(\Sigma T^4 \setminus \{ p, q\}) \ncong H_2(\Sigma T^4 )$ and $H_3(\Sigma T^4 \setminus \{ p, q\}) \ncong H_3(\Sigma T^4 )$. Consequently, the isomorphism \eqref{eq_teorema_2} does not hold, even though for the zero perversity $\overline{p}(5) = \overline{0}(5) = 0$ and the top perversity $\overline{p}(5) = \overline{t}(5) = 3$, the integers $i=2,3$ satisfy the required hypotheses:
\[
    1 < i < 4 \quad \text{and} \quad i \neq 4-\overline{p}(5).
\]
This failure occurs because the links of $p$ and $q$ are $4$-dimensional tori, rather than a disjoint union of $4$-dimensional spheres.
\end{example}

\begin{example}
The isomorphism given in \eqref{eq_teorema} within the first part of Theorem~\ref{teo_topologico} is a classical result in intersection homology and holds for cone singularities different from spherical-cone singularities. Consider $X$ as the suspension of the torus, illustrated in Figure~\ref{s:suspentore}. The singular points $p$ and $q$ are conical singularities whose links are homeomorphic to a torus; in particular, they are not spherical-cone singularities.

\begin{figure}[H]
\centering
\begin{tikzpicture}[scale=0.2]

\draw (0,0) ellipse (7 and 4);
\draw (-3,0.7) arc (180:350:3 and 2);
\draw (2.8,0) arc (23:158:3 and 2);

\draw [red,thick](0,0) ellipse (5.5 and 3);
\draw [red,dashed,thick][rotate=200] (2.2,3.3) arc (90:270: 0.7 and 1.5);
\draw [red,thick][rotate=200] (2.2,0.3) arc (270:450: 0.7 and 1.5);

\node at (-2.25,-2)[red] [rotate=290]  {$<$};
\node at (-2.25,-1.5)[left,red]  {$b$};,
\node at (-6.3,0)[red]  {$a$};    
\node at (-5.5,0)[red]   [rotate=90]  {$<$};

\draw (-1.5,10) -- (-5,10) -- (-9,7) -- (13,7) --(17,10)-- (1.5,10);
\draw (-2.5,8.5) arc (180:360:2.5 and 0.8);
\draw (10.5,8.5) ellipse( 2.5 and 0.8);
\draw (9.25,8.8) arc (170:370:1.3 and 0.4);
\draw (11.4,8.5) arc (20:158:1 and 0.4);
\node at (6,8.5) {$=$};

\node at (0,12)[above]{$p$};
\draw (0,12) -- (-2.5,8.5); 
\draw [dotted,very thick] (-2.5,8.5) -- (-3.55,7);
\draw (-3.55,7) -- (-6.5,3);
\draw (0,12)  -- (-2,8);  
\draw [dotted,very thick](-2,8) -- (-2.5,7);
\draw (-2.5,7)-- (-4,4);
\draw (0,12)  -- (-1.5,7.8); 
\draw [dotted,very thick](-1.5,7.8) -- (-1.8,7);
\draw (-1.8,7)-- (-2.5,5);
\draw (0,12)  -- (-0.75,7.7); 
\draw  [dotted,very thick](-0.75,7.7) -- (-0.9,7);
\draw (-0.9,7)-- (-1.25,5.25);
 \draw (0,12)  -- (0,7.7); 
 \draw  [dotted,very thick](0,7.7) -- (0,7);
 \draw (0,7) -- (0,5);
 \draw (0,12)  --(0.9,7.75); 
\draw  [dotted,very thick]( (0.9,7.75) -- (1.05,7);
\draw(1.05,7)--(1.5,5);
\draw (0,12)  -- (1.7,7.8);
\draw  [dotted,very thick](1.7,7.8) --(2,7);
\draw (2,7)-- (3,4.5);
\draw (0,12)  -- (2.15,8.2);
\draw [dotted,very thick](2.15,8.2) -- (2.8,7);
\draw (2.8,7)-- (4.5,4);
\draw (0,12)  -- (2.5,8.5);
\draw [dotted,very thick](2.5,8.5) -- (3.65,7);
\draw(3.65,7)--(6.5,3);

\node at (8.5,0) {$T$};

\node at (0,-12)[below]{$q$};
\draw (0,-12) -- (-6.5,-3);
\draw (0,-12) -- (-4,-4);
\draw (0,-12) -- (-2.5,-5);
\draw (0,-12) -- (-1.25,-5.25);
\draw (0,-12) -- (0,-5.5);
\draw (0,-12) -- (1.5,-5);
\draw (0,-12) -- (3,-4.5);
\draw (0,-12) -- (4.5,-4);
\draw (0,-12) -- (6.5,-3);
\end{tikzpicture}
\caption{Suspension of the torus. The singular points $p$ and $q$ are conical singularities with toroidal links.}\label{s:suspentore}
\end{figure}

In dimension $3$, there are only two perversities: $\overline{0}(3)=0$ and $\overline{t}(3)=1$. Thus, the isomorphism given in \eqref{eq_teorema} splits into two cases:

\[
IH^{\overline{0}}_{i}(X)\cong
\begin{cases}
    H_{i}(X), & \text{if } i=3, \\
    \Ima \left(H_i(X\setminus \operatorname{Sing} (X))\longrightarrow H_i(X) \right), & \text{if } i = 2,\\
    H_{i}(X \setminus \operatorname{Sing} (X)), & \text{if } i=1,0,
\end{cases}
\]

and 

\[
IH^{\overline{t}}_{i}(X)\cong
\begin{cases}
    H_{i}(X), & \text{if } i=3,2; \\
    \Ima \left(H_i(X\setminus \operatorname{Sing} (X))\longrightarrow H_i(X) \right), & \text{if } i = 1;\\
    H_{i}(X \setminus \operatorname{Sing} (X)), & \text{if } i=0.
\end{cases}
\]

Using the previously calculated singular homologies alongside these isomorphisms, we can construct Table~\ref{tablesuspension}, which displays the singular homology and intersection homology groups of $X$. 

\begin{table}[htpb]
\centering
\renewcommand{\arraystretch}{1.5}
\begin{tabular}{|c|c|c|c|c|}
\hline
\ \  $i$ \ \ & $H_i(X)$ & $H_i(X\setminus \{p,q\})$ & $IH^{\overline{0}}_i(X)$ & $IH^{\overline{t}}_i(X)$ \\
\hline
$0$ & $\mathbb{Z}_{\{x\}}$ & $\mathbb{Z}_{\{x\}}$ & $\mathbb{Z}_{\{x\}}$ & $\mathbb{Z}_{\{x\}}$ \\
\hline
$1$ & $0$ & $\mathbb{Z}_{a} \oplus \mathbb{Z}_{b}$ & $\mathbb{Z}_{a} \oplus \mathbb{Z}_{b}$ & $0$ \\
\hline
$2$ & $\mathbb{Z}_{\Sigma (a)} \oplus \mathbb{Z}_{\Sigma (b)}$ & $\mathbb{Z}_{T}$ & $0$ & $\mathbb{Z}_{\Sigma (a)} \oplus \mathbb{Z}_{\Sigma (b)}$ \\
\hline
$3$ & $\mathbb{Z}_{[X]}$ & $0$ & $\mathbb{Z}_{[X]}$ & $\mathbb{Z}_{[X]}$ \\
\hline
\end{tabular}
\vspace{0.2cm}
\caption{Comparison of the groups $H_*(X)$, $IH_*^{\overline{p}}(X)$, and $H_*(X\setminus\{A,B\})$ for the suspension of the torus $X$.}
\label{tablesuspension}
\end{table}
\end{example}

In the remainder of this section, we develop a dynamical approach to the computation of the intersection homology of closed pseudomanifolds with spherical-cone singularities. Our construction is inspired by the classical relationship between Morse theory and singular homology in the smooth setting.

Recall that if $M$ is a closed smooth manifold and $f\colon M\to \mathbb{R}$ is a Morse function whose gradient flow is Morse-Smale, then the singular homology of $M$ can be recovered from a chain complex generated by the critical points of $f$. This construction, known as the Morse complex, encodes topological information through the dynamics of the associated Morse-Smale flow.
For the convenience of the reader, we briefly recall the main ingredients of this construction. Further details can be found in~\cite{Banyaga2004, Weber2006}.

Let $f\colon M \to \mathbb{R}$ be a Morse function on a smooth $n$-manifold $M$. Fix an arbitrary orientation of the unstable manifold $W^{u}(x)$ for each critical point $x \in \operatorname{Crit}(f)$, and denote by $Or$ the collection of these choices. The \textit{Morse group} $C^M_{\ast}(f)$ is defined as the graded free abelian group generated by the critical points of $f$, graded by their Morse index; i.e.
\[
    C^M_{k}(f) = C^M_{k}(M,f,g, Or; \mathbb{Z}) \coloneqq  \bigoplus_{x \in \operatorname{Crit}_k(f)} \mathbb{Z} \langle x\rangle,
\]
where $\langle x\rangle$ denotes the critical point $x$ together with the chosen orientation of $W^{u}(x)$, and $g$ is a Riemannian metric on $M$. The \textit{k-th Morse boundary map} 
\( 
    \partial_{k}^M \colon C^M_{k}(f) \to C^M_{k-1}(f),
\)
is defined on a generator $\langle x\rangle \in C^M_{k}(f)$ as
\[
    \partial^M_k\langle x\rangle \coloneqq  \sum_{y \in \operatorname{Crit}_{k-1}(f)} n(x,y) \langle y\rangle,  
\]
where $n(x,y)$ is the signed count of connecting trajectories from $x$ to $y$; see~\cite{Weber2006} for a precise definition. Extending linearly yields a chain complex $(C^M_\ast(f),\partial_\ast^M)$, called the \emph{Morse-Smale-Witten complex}, or simply the \emph{Morse complex}.
 The corresponding \textit{Morse homology groups} with integer coefficients are given by
\[
    H^M_{k}(M,f,g,Or;\mathbb{Z}) \coloneqq \frac{\ker(\partial_{k}^M)}{\operatorname{im}(\partial_{k+1}^M)}, \quad \text{for all } k\in \mathbb{Z}.    
\]

A fundamental theorem of Morse homology states that these groups are independent of the choices of Morse function, metric, and orientations, and are naturally isomorphic to the singular homology of $M$:
\[
    H^M_{\ast}(M;\mathbb{Z}):= H^M_{k}(M,f,g,Or;\mathbb{Z}) \cong H_{\ast} (M;\mathbb{Z}).
\]
Although we present the construction with integer coefficients, it extends naturally to coefficients in an arbitrary ring.

Our goal is to use the morsification introduced in Section~\ref{subsection_morsification} to adapt this viewpoint to pseudomanifolds with spherical-cone singularities. The following result enables us to make use of dynamical data associated with a singular Morse-Smale flow without closed orbits on an $n$-pseudomanifold with spherical-cone singularities $X$ to compute its intersection homology.

\begin{theorem}
\label{theorem_IH_with_dyna}
Let $X$ be an orientable closed $n$-pseudomanifold with spherical-cone singularities, and let $\varphi$ be a singular Morse-Smale flow without periodic orbits on $X$. Then, for any perversity $\overline{p}$ on $X$, the following isomorphism holds
\[
    IH^{\overline{p}}_i(X;\Z) \ \cong \ H^M_i(\widetilde{X}, \widetilde{\varphi}; \Z),
\]
where $(\widetilde{X}, \widetilde{\varphi})$ is the morsification of $(X, \varphi)$ given by Lemma~\ref{lemma_morsification}, and $H^M_i(\widetilde{X}, \widetilde{\varphi}; \Z )$ is the Morse homology group of $(\widetilde{X}, \widetilde{\varphi})$.
\end{theorem}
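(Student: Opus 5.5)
The proof chains together three isomorphisms: the Goresky--MacPherson invariance of intersection homology under normalization, the agreement of intersection homology with singular homology on manifolds, and the classical Morse homology theorem. Concretely, we aim to establish
\[
IH^{\overline{p}}_i(X;\Z)\ \cong\ IH^{\overline{p}}_i(\widetilde{X};\Z)\ \cong\ H_i(\widetilde{X};\Z)\ \cong\ H^M_i(\widetilde{X},\widetilde{\varphi};\Z).
\]

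The first and main step is to show that the projection $\mathfrak{p}\colon \widetilde{X}\to X$ from Lemma~\ref{lemma_morsification} is a normalization of $X$, as recalled in Subsection~\ref{subsection_IH_background}.

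\emph{Normality of $\widetilde{X}$.} By construction, $\widetilde{X}$ is a closed smooth $n$-manifold. Since $X$ is orientable, its regular part is orientable. The orientation on $\mathfrak{p}^{-1}(X\setminus\operatorname{Sing}(X))$ extends across each added point $\widetilde{p}^{j}_{\lambda}$, because each such point is the center of a ball $\overline{\mathbb{B}}_j$ and $\overline{\mathbb{B}}_j\setminus\{\widetilde{p}^j_\lambda\}$ is connected for $n\geq 2$. Hence $H_n(\widetilde{X},\widetilde{X}\setminus\{q\};\Z)\cong\Z$ for every $q$.

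\emph{$\mathfrak{p}$ is finite-to-one.} It is a bijection away from $\operatorname{Sing}(X)$, and the fiber over $p\in\operatorname{M}(\mathcal{C}_k)$ consists of exactly $k$ points.

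\emph{The local isomorphism condition.} At a regular point this is immediate. At $p\in\operatorname{M}(\mathcal{C}_k)$ we use excision onto the chart $N=\bigvee_{i=1}^k\overline{\mathbb{B}}_i$. The long exact sequence of the pair, with $N$ contractible and $N\setminus\{p\}\simeq\bigsqcup_{i=1}^k \mathbb{S}^{n-1}$, gives
\[
H_n(X,X\setminus\{p\})\cong \widetilde{H}_{n-1}\Big(\bigsqcup_{i=1}^k\mathbb{S}^{n-1}\Big)\cong\Z^k .
\]
For $n=2$ one must check the reduced $H_0$ contribution separately; it lands in $H_1$, not $H_2$, so the statement still holds. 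On the other side, $\bigoplus_{j}H_n(\widetilde{X},\widetilde{X}\setminus\{\widetilde{p}^j\})\cong\Z^k$. The map $\mathfrak{p}_\ast$ sends the local orientation class at $\widetilde{p}^j$ to the fundamental class of the $j$-th sphere in the link, because $\mathfrak{p}$ restricts to a homeomorphism from the $j$-th ball onto the $j$-th sheet. So $\mathfrak{p}_\ast$ is an isomorphism.

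I expect this step to be the main obstacle. The point requiring care is the orientation compatibility: the generators must match with signs, not merely give abstractly isomorphic groups.

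With $\mathfrak{p}$ a normalization, the Goresky--MacPherson theorem yields $IH^{\overline{p}}_\ast(\widetilde{X})\cong IH^{\overline{p}}_\ast(X)$ for every perversity $\overline{p}$. Since $\widetilde{X}$ is a manifold, its stratification can be taken trivial; by topological invariance of intersection homology, every chain is allowable, so $IH^{\overline{p}}_\ast(\widetilde{X})\cong H_\ast(\widetilde{X})$.

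Finally, the absence of periodic orbits in $\varphi$ is used. The morsification only replaces each cone point by hyperbolic singularities, so $\widetilde{\varphi}$ is a Morse--Smale flow without closed orbits, after the small perturbation mentioned in Lemma~\ref{lemma_morsification}. By Smale's theorem, $\widetilde{\varphi}$ is the gradient flow of a Morse function $\widetilde{f}$ for a suitable metric, and the Morse--Smale transversality condition holds. The Morse complex is therefore defined, and the fundamental theorem of Morse homology gives $H^M_\ast(\widetilde{X},\widetilde{\varphi};\Z)\cong H_\ast(\widetilde{X};\Z)$. Composing the three isomorphisms proves the statement.
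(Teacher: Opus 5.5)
Your proposal is correct and follows the same route as the paper. Both arguments show that $\mathfrak{p}\colon\widetilde{X}\to X$ is a normalization, invoke Goresky--MacPherson's invariance of $IH^{\overline{p}}_\ast$ under normalization, identify $IH^{\overline{p}}_\ast(\widetilde{X})$ with $H_\ast(\widetilde{X})$ because $\widetilde{X}$ is a manifold, and conclude with the Morse homology theorem. Two points are actually more careful than the paper's write-up: you check that $\mathfrak{p}_\ast$ itself sends the local class at each $\widetilde{p}^j$ to the $j$-th link sphere's fundamental class, rather than only noting both sides are $\Z^k$, and you explain how the no-periodic-orbits hypothesis makes the Morse complex of $\widetilde{\varphi}$ available.
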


\begin{proof}
    Let $(\widetilde{X}, \widetilde{\varphi})$ be the morsification of $(X, \varphi)$ given by Lemma~\ref{lemma_morsification}. Since $\widetilde{X}$ is an oriented smooth manifold of dimension $n$, we have
    \[
        H_n(\widetilde{X}, \widetilde{X}\setminus \{ x \})\cong \Z,
    \]
    for all $x \in \widetilde{X}$. Thus, $\widetilde{X}$ can be regarded as a normal pseudomanifold.

    Let  $\pi \colon \widetilde{X} \to X$ be the projection associated to the morsification $(\widetilde{X},\widetilde{\varphi})$, as in Lemma~\ref{lemma_morsification}. Given  $p \in \operatorname{Sing}(X)$ such that $p \in \operatorname{M}(\mathcal{C}_k)$, we have 
    \( 
        \pi(  \{ \widetilde{p}_1, \dots , \widetilde{p}_k \})=p\) 
        and \(  \pi^{-1}(p) = \{ \widetilde{p}_1, \dots , \widetilde{p}_k \}.
    \)
  Hence,
    \begin{align*}
        \bigoplus_{q\in \pi^{-1}(p)}H_n(\widetilde{X}, \widetilde{X}\setminus \{q\})\ = \ &\bigoplus_{i=1}^k H_n(\widetilde{X}, \widetilde{X}\setminus \{\widetilde{p}_i\})
        \ \cong \ \bigoplus_{i=1}^k \Z. 
    \end{align*}
    On the other hand, for such point  $p \in \operatorname{M}(\mathcal{C}_k)$, it holds that
    \[
        H_n(X, X\setminus \{p\})\cong \bigoplus_{i=1}^k \Z.
    \]
    Outside of $\operatorname{Sing}(X)$, the map $\pi\colon \widetilde{X}\setminus \pi^{-1}(\operatorname{Sing}(X))\to X\setminus \operatorname{Sing}(X)$ is a diffeomorphism, yielding the following isomorphism
    \[
        H_n(\widetilde{X}, \widetilde{X}\setminus \{ \pi^{-1}(x) \} )\ \cong \ \Z  \ \cong \ H_n(X, X\setminus \{x\}),
    \]
    for all $x \in X \setminus \operatorname{Sing}(X)$. Therefore, in any case, we have the following isomorphism
    \[
        \pi_{\ast}\colon  \bigoplus_{q\in \pi^{-1}(p)}H_n(\widetilde{X}, \widetilde{X}\setminus \{q\}) \longrightarrow H_n(X, X\setminus \{p\}).
    \]
    Thus, $\pi \colon \widetilde{X}\to X$ is a normalization of $X$. It follows from the theorem in Section~4.2 of~\cite{goresky1980} that
    \[
        IH^{\overline{p}}_{\ast} (X; \Z)\cong IH^{\overline{p}}_{\ast} (\widetilde{X}; \Z),
    \]
    for every perversity $\overline{p}$. Since $\widetilde{X}$ is smooth,  $IH^{\overline{p}}_{\ast} (\widetilde{X}; \Z)\cong H_{\ast}(\widetilde{X}; \Z)$. Applying the Morse Homology Theorem, we obtain
    \[
        H_{\ast}(\widetilde{X}; \Z)\cong H^M_{\ast} (\widetilde{X}, \widetilde{\varphi}; \Z),
    \]  
    where $H^M_{\ast} (\widetilde{X}, \widetilde{\varphi}; \Z)$ denotes the Morse homology of $(\widetilde{X}, \widetilde{\varphi})$. Therefore, we obtain the following isomorphism for every perversity $\overline{p}$
    \[
        IH^{\overline{p}}_{\ast} (X; \Z)\cong H^M_{\ast} (\widetilde{X}, \widetilde{\varphi}; \Z).
    \]
\end{proof}

\begin{remark}
Most of the definitions and results used in this article, and hence the majority of the obtained results, remain valid for an arbitrary coefficient group $G$ instead of instead of $\Z$. This opens the door to more general statements. For instance, it would be interesting to investigate the nonorientable case, using $\Z_2$-coefficients.
\end{remark}

To conclude this section, we present a simple example illustrating how dynamical behavior can be used to explicitly compute the intersection homology of a pseudomanifold with spherical-cone singularities via Theorem~\ref{theorem_IH_with_dyna}.

\begin{example} Given a natural number $n\geq 2$, consider the $n$-dimensional torus $\mathbb{T}^n$ endowed with the Morse flow presented  in Example~\ref{example_torus}. We pointed out that in Example~\ref{example_torus}, we assumed $n$ to be even solely to guarantee the existence of a flow on the sphere with exactly two repelling singularities and one attracting periodic orbit; we are not using this structure here. Consider $M_1$ (resp., $M_2$) be the disjoint union of $m_1$ (resp., $m_2$) $n$-spheres endowed with a North-South pole flow. Finally, consider the $n$-pseudomanifold with spherical-cone singularities 
\[
    X = \left( T^n \sqcup M_1 \sqcup M_2 \right) / \sim, 
\]
where the equivalence relation $\sim$ is defined by identifying the $m_1$ repelling singularities of $M_1$ and a saddle of index $\lambda$ on $T^n$ to a single singularity $p_1$, and identifying the $m_2$ attracting singularities of $M_2$ and another saddle of index $\lambda$  on $T^n$ to a single singularity $p_2$. This space can be naturally endowed with a singular Morse-Smale flow $\varphi$ as described in Example~\ref{example_torus}. In this case, we have two spherical-cone singularities, namely $p_1\in \operatorname{M}(\mathcal{C}_{m_1+1})$ of nature $s_{\lambda}r^{m_1}$, and $p_2\in \operatorname{M}(\mathcal{C}_{m_2+1})$ of nature $a^{m_2}s_{\lambda}$.  

The morsification of the pair $(X,\varphi)$ is given simply by $\widetilde{X} = T^n\sqcup M_1 \sqcup M_2$. Note that the associated Morse-Smale flow $\widetilde{\varphi}$ is given by the flows associated to the negative gradient vector field of the function $f \colon T^n \to \R$ on $T^n$ (see Example~\ref{example_torus}), and to the negative gradient of the height function  $g_j \colon M_j \to \R$ on $M_j$,  for $j=1,2$. Thus, the Morse chain complex  associated to $g_j$ is given by
\begin{align*}
C^M_i(M_j, g_j; \Z) =
\begin{cases}
    \bigoplus_{k=1}^{m_j} \Z, & \text{if } i=0, n;\\
    0, & \text{otherwise},
\end{cases}
\end{align*}
and the boundary operators vanish, i.e. $\partial^M_i = 0$ for all $i=0, \dots, n$. Hence, the Morse homology of $M_j$ is equal to
\begin{align*}
H^M_i(M_j, g_j; \Z) =
\begin{cases}
    \bigoplus_{k=1}^{m_j} \Z, & \text{if } i=0, n;\\
    0, & \text{otherwise}.
\end{cases}
\end{align*}
For the Morse function $f$ on the torus $T^n$, we have 
\[
    C^M_i(T^n, f; \Z) = \bigoplus_{k=1}^{c_i} \Z \cong \Z^{c_i},
\]
where $c_i = \binom{n}{i}$. As established in Example~\ref{example_torus}, the gradient dynamics of the separable cosine function on $T^n$ yield pairs of connecting orbits with opposite orientations that cancel out algebraically. This implies that the boundary operators on $T^n$ also vanish, i.e. $\partial^M_i = 0$ for all $i=0, \dots, n$. Thus, 
\[
    H^M_i(T^n, f; \Z) \cong \bigoplus_{k=1}^{c_i} \Z.
\]
Therefore, by Theorem~\ref{theorem_IH_with_dyna}, we obtain the following isomorphisms for all $i$ and for any perversity $\overline{p}$ on $X$
\begin{align*}
    IH^{\overline{p}}_i(X; \Z) & \ \cong \  H^M_i(\widetilde{X}, \widetilde{\varphi}; \Z) \ \cong \ H^M_i(T^n, f; \Z) \oplus H^M_i(M_1, g_1; \Z) \oplus H^M_i(M_2, g_2; \Z).
\end{align*}
Consequently, 
\[
    IH^{\overline{p}}_n(X; \Z) \ \cong \ IH^{\overline{p}}_0(X; \Z)\ \cong \ \bigoplus_{k=1}^{m_1+m_2+1} \Z,
\]
and for all $i \in \{ 1, \dots, n-1 \}$, we obtain
\[
    IH^{\overline{p}}_i(X; \Z)\ \cong \ \bigoplus_{k=1}^{c_i} \Z.
\]

In the case that $n=2$, $m_1=1$ and $m_2=2$, we can visualize the corresponding space $X$ in Figure~\ref{fig_example_003}.
\begin{figure}[!ht]
\centering
\begin{overpic}[width=5cm]{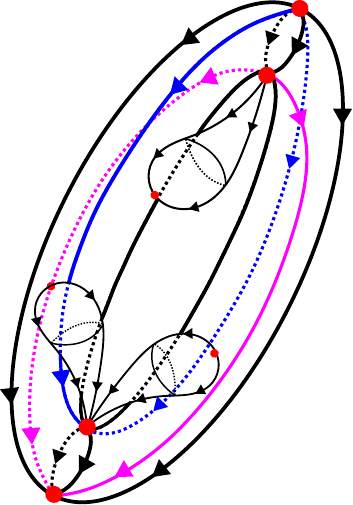}
\put(47,89){$p_1$}
\put(9.5,15.5){$p_2$}
\end{overpic}
\caption{Closed $2$-pseudomanifold with spherical-cone singularity and no periodic orbits.}
\label{fig_example_003}
\end{figure}
In this setting, we restrict ourselves to the zero perversity since, in dimension $2$, the zero perversity is the only one available. Thus
\[
IH^{\overline{0}}_i(X; \Z)= 
\begin{cases}
    \Z \oplus \Z \oplus \Z \oplus \Z, & \text{if }i=2;\\
    \Z \oplus \Z, & \text{if }i=1;\\
    \Z \oplus \Z \oplus \Z \oplus \Z, & \text{if }i=0.
\end{cases}
\]
\hfill $\diamond$
\end{example}

\section{Final Remarks}
\label{section_conclusion}

This work can be viewed as a first attempt to develop a Conley-theoretic framework for singular dynamical systems on higher-dimensional stratified manifolds. Previous related constructions in this direction were restricted to the two-dimensional case, while the present paper initiates the study in arbitrary dimensions.

Within this setting,  we have developed a dynamical framework for studying topological invariants of singular spaces, in which singular Morse–Smale dynamics on pseudomanifolds with spherical-cone singularities is used to encode and recover homological information. In particular, we showed how intersection homology and the Euler–Poincaré characteristic can be expressed in terms of dynamical data, extending classical constructions from smooth Morse theory to a singular setting.

The results presented here  may serve as a starting point toward a broader Morse–Conley type theory for singular spaces. Several directions remain open. In particular, it would be natural to extend the present framework to more general classes of singular spaces, especially those with higher-dimensional singular strata and more general cone singularities, as well as to investigate more general dynamical systems beyond the singular Morse–Smale setting, where additional complexity and possible chaotic behavior arise.

From a more combinatorial perspective, the relationship between singular flows and their associated Lyapunov graphs also suggests further questions. Theorems such as Theorem~\ref{theorem_graph_map} and Theorem~\ref{theorem_degree} only begin to describe this interaction. Furthermore, problems concerning the realization of abstract Lyapunov graphs can be formulated and investigated in the near future.

Finally, we note that most of the constructions and results developed in this paper extend naturally to coefficients in a general abelian group. This opens the possibility of further generalizations, including the study of nonorientable pseudomanifolds via $\mathbb{Z}_2$ coefficients.

\addtocontents{toc}{\protect\setcounter{tocdepth}{-1}}

\subsection*{Author contributions}
All authors contributed equally to the conception, development, and writing of the manuscript. All authors read and approved the final version of the paper.

\subsection*{Conflict of interest}
The authors declare no potential conflict of interests.

\addtocontents{toc}{\protect\setcounter{tocdepth}{1}}

\bibliographystyle{amsplain} 
\bibliography{bibliography}   
\end{document}